\newtheorem{proposition}{Proposition}%[section]
\numberwithin{proposition}{section}
\newtheorem*{proposition*}{Proposition}
\newtheorem{theorem}{Theorem}%[section]
\numberwithin{theorem}{section}
\newtheorem*{theorem*}{Theorem}
\newtheorem{lemma}{Lemma}%[proposition]
\newtheorem*{lemma*}{Lemma}
\numberwithin{lemma}{section}
\newtheorem{corollary}{Corollary}%[proposition]
\newtheorem*{corollary*}{Corollary}%[proposition]
\numberwithin{corollary}{section}
\theoremstyle{remark}
\newtheorem{remark}{Remark}
\newtheorem*{remark*}{Remark}%[proposition]
\numberwithin{remark}{section}
\theoremstyle{definition}
\newtheorem{definition}{Definition}%[proposition]
\numberwithin{definition}{section}
\newtheorem*{definition*}{Definition}
\newtheorem*{example*}{Example}
\numberwithin{example}{section}
\numberwithin{counterexample}{section}
\numberwithin{assumption}{section}
\newenvironment{preamble}{}{}
\newcommand{\supp}{\operatorname{supp}}
\newcommand{\diag}{\operatorname{diag}}
\newcommand{\dist}{\operatorname{dist}}
\renewcommand{\setminus}{\smallsetminus}
\def\clap#1{\hbox to 0pt{\hss#1\hss}}
\newcommand{\tostar}{\overset{*}{\lower0.5em\hbox{$\smash{\scriptscriptstyle\rightharpoonup}$}}}
\DeclareMathOperator*{\prox}{Prox}
\DeclareMathOperator*{\sign}{sign}
\DeclareMathOperator*{\argmin}{arg\,min}
\newcolumntype{M}[1]{>{\centering\arraybackslash}m{#1}}
\begin{document}
\newgeometry{left=24mm,right=34mm, top=24mm, bottom=34mm}

\begin{preamble}

	\title{Optimal Control of the Linear Wave Equation by Time-Depending BV-Controls: A Semi-Smooth Newton Approach}
%\title{Semi Smooth Newton Super Linear Convergence Results for a Regularized Optimal Control Problem of a Linear Hyperbolic Equation with Time Depending Bounded Variation Controls, and additional Sparsity Results.}

\author{S. Engel  
\footnote{
Institute for Mathematics and Scientific Computing, Karl-Franzens-Universität, Heinrichstr. 36, 8010 Graz, Austria
(\url{sebastian.engel@uni-graz.at})}
$\,$ and K. Kunisch
\footnote{
Institute for Mathematics and Scientific Computing, Karl-Franzens-Universität, Heinrichstr. 36, 8010 Graz, Austria, and Radon Institute, Austrian Academy of Sciences, (\url{karl.kunisch@uni-graz.at
})}
}
\date{August 2018}
\maketitle

\end{preamble}

\begin{abstract}
An optimal control problem for the linear wave equation with control cost chosen as the BV semi-norm in time is analyzed. This formulation  enhances piecewise constant optimal controls and penalizes the number of jumps. Existence of optimal solutions  and necessary optimality conditions are derived. With numerical realisation in mind, the regularization by $H^1$ functionals is investigated, and the asymptotic behavior as this regularization tends to zero is analyzed. For the $H^1-$regularized problems the 
semi-smooth Newton algorithm can be used to solve the first order optimality conditions with super-linear convergence rate. Examples are constructed which show that the distributional derivative of an optimal control can be a mix of absolutely continuous measures with respect to the Lebesgue measure, a countable linear combination of Dirac measures, and Cantor measures. Numerical results illustrate and support the analytical results.
\end{abstract}
\textcolor{white}{text}\\ \\
\textbf{AMS subject classifications.} 26A45, 35L05, 35L10, 49J15, 49J20, 49J52, 49K20.
\\ \\

\textbf{Key words.}	Wave equation, optimal control problems, sparsity, non-smooth analysis, functions of bounded variation, semi-smooth Newton method.

%\listoftodos
%\tableofcontents

%\setcounter{section}{0}
%\setcounter{subsection}{1}
%\section*{Abstract}

\section{Introduction}
We investigate the following optimal control problem for the wave equation with homogeneous Dirichlet boundary condition:
\begin{align*}
(P)\left\lbrace \begin{matrix}
& \min\limits_{u\in BV(0,T)^m} \frac{1}{2}\| y_u-y_d \|_{L^2(\Omega_T)}^2 + \sum\limits_{j=1}^m \alpha_j \|D_tu_j\|_{M(I)}=:J(y,u)\\ \\
\text{subject to } & (\mathcal{W}) \left\lbrace\begin{matrix}
\partial_{tt} y_u - \bigtriangleup y_u =  \sum\limits_{j=1}^m u_jg_j  &\text{ in }&(0,T) \times \Omega \\
y_u = 0  &\text{ on }&(0,T)\times \partial \Omega \\[1.1ex]
(y_u,\partial_t y_u) =   (y_0 , y_1)  &\text{ in }& \lbrace 0 \rbrace \times \Omega,
\end{matrix}\right.
\end{matrix}\right.
\end{align*}
where $\Omega \subset \mathbb{R}^n$, with  $n \in  \{1,2,3\}$,  is a bounded open domain with Lipschitz boundary $\Gamma:=\partial \Omega$, $T\in (0,\infty)$, and $y_d\in L^2((0,T)\times\Omega)$.  
The temporally dependent controls $u$ are chosen as $u=(u_1,...,u_m)\in BV(0,T)^m$, and $BV(0,T)^m$  is endowed with the norm $\|u\|_{BV(I)^m}=  \sum\limits_{j=1}^m  \|u_j\|_{L^1(I)} + \|D_tu_j\|_{M(I)}$, where  $I:=(0,T)$. 
Here  ${M(I)}$  denotes the space of Borel measures, endowed with the total variation norm  $\|\cdot \|_{M(I)}$ . Further let $(g_j)_j^m \subset L^\infty(\Omega)\setminus \lbrace 0 \rbrace$ with pairwise disjoint supports $w_j:=\supp(g_j)$, and $\alpha_j>0$. The initial data are chosen as  $(y_0,y_1) \in H^1_0(\Omega) \times L^2(\Omega)$ and we abbreviate $H:=L^2(\Omega)$, $V:=H^1_0(\Omega)$, with $V^*:=H^{-1}(\Omega)$. Finally we set $\Omega_T:=(0,T)\times \Omega$.

In problem $(P)$, we focus our attention on sparse optimal controls in the sense that they are piecewise constant. In particular, using the total variation of a BV-function in the cost functional $J$, enhances sparsity in the derivative of the optimal control.
%We search for controls with sparse distributional derivatives, in the sense that it consists of a countable linear combination of Dirac measures. 
%A sparse distributional derivative of an optimal control is a countable linear combination of Dirac measures. 
For a piecewise constant optimal control of $(P)$ the jumps are located in the position of these Dirac measures, see for example \cite{[KCK]}. 
This type of sparsity property is reflected in the necessary and
sufficient first-order optimality condition.
As far as the authors know, the $L^1$-norm is one of the first discussed sparsity enhancing cost terms in the context of partial differential equations. A detailed discussion on the history of sparsity in optimal control of partial differential equations can be found in e.g. \cite{CaSEMA}. Furthermore,
sparsity results for optimal control problems with linear partial differential equations are considered in several works. References were specified for example in \cite{CKNavier} where the authors emphasize the papers \cite{CKParabST}, \cite{CaZu}, \cite{CCKApprox}, \cite{CCKParab}, \cite{VexZu}, \cite{CLKDual}, \cite{CLKMeas}, \cite{KPVMeas}, and \cite{KTVundam}. In image reconstruction, $BV$-functions are well investigated but modeling aspects are different compared to optimal control with partial differential equations. In mathematical image analysis the use of BV-functionals is motivated by their ability to preserve natural edges and corners in the image. An introduction to image reconstruction aspects can be found in \cite{CCNCP}.

For the purpose of numerical realization we rely on regularized problems by using the $H^1$ semi-norm. This enables us to approximate the $BV$ optimal control of $(P)$ by the $H^1$ controls in the strict-$BV$ sense. The main purpose of this regularization is to use the semi-smooth Newton algorithm for which we present super-linear convergence results. In particular, one is able to show that the regularized problem permits a point-wise formula for the derivative of the $H^1$ controls. This property is used for the well-posedness result of the Newton algorithm.

Let us briefly outline the following sections. In section \ref{WaveBVSection} we gather the necessary prerequisites on the wave equation and on one-dimensional $BV$-functions which will be needed later on in this paper. Section \ref{AnalysofP} is dedicated to the analysis of the optimal control problem and sparsity properties of the optimal controls.
Section \ref{regularizeSEct} is devoted to the regularized problem $(P^1_\gamma)$, the corresponding convergence results for the optimal controls of $(P^1_\gamma)$ as $\gamma\rightarrow 0$, and the first-order optimality conditions of $(P^1_\gamma)$.

Furthermore,  the  semi-smooth Newton algorithm and its  super-linear convergence are presented. The algorithm is embedded into a path following algorithm to approximate the original unregularized problem. In section \ref{numericsection}, we construct test cases for problem $(P)$ in such a manner that exact analytic solutions for $(P)$ can be found. The construction steps can be used to build all types of distributional derivatives for the optimal controls $D_tu_j$. This means that $D_tu_j$ can be a mix of absolutely continuous measures with respect to the Lebesgue measure, a countable linear combination of Dirac measures, and Cantor measures, see for example \cite[p. 184]{[AmFuPa]}. The first numerical example refers to an optimal control that has finitely many jumps. In the second example, we construct an optimal control which can be characterized as a Cantor function.
In the last section \ref{remarkSect} we remark that our results can be extended  to  several other linear second-order hyperbolic equations.

\section{The Wave Equation and $BV$ Functions in Time}
\label{WaveBVSection}
\subsection{Preliminaries on the Wave Equation}
Since in this work non-smooth data are used for the wave equation, we directly introduce the weak solution of the wave equation (see e.g. \cite{[Zlot]}). In particular $y_u$ is understood as the weak solution of the wave equation $(\mathcal{W})$ in problem $(P)$. Furthermore, we present in this section standard regularity results, and an energy estimation for the weak solution of the wave equation.
\begin{definition}(\cite[Chap.IV, Sec.4]{[La68]})\label{weakDef} We call $y\in C([0,T];V)$ with $\partial_t y \in C([0,T];H)$ a weak solution of $(\mathcal{W})$ with forcing $f\in L^1(0,T;H)$, displacement $y_0\in V$, and velocity $y_1\in H$ if $y\vert_{t=0}=y_0$
	\begin{align}
		\begin{matrix}
		-\int\limits_I\left\langle\partial_t  y ,\partial_t \eta \right\rangle_{H}dt+\left\langle\bigtriangledown y ,\bigtriangledown\eta \right\rangle_{L^2(\Omega_T)}=\left\langle y_1,\eta\vert_{t=0} \right\rangle_{H}+\int\limits_I\left\langle f,\eta \right\rangle_{H}dt
		\end{matrix}
	\end{align}
	for every $\eta\in L^1(I;V)$ such that $\partial_t \eta \in L^1(I;H)$, $\eta\vert_{t=T}=0$. \label{weakSoluti}
\end{definition}

\begin{theorem}[\cite{[Zlot]}]\label{RegTheoremWave}\label{energy}
For each $(f,y_0,y_1)\in L^1(0,T;H)\times V \times H$ there exists a unique weak solution $y=y(f,y_0,y_1)\in C([0,T];V)\cap C^1([0,T];H)$ of $(\mathcal{W})$.

The mapping $(f,y_0,y_1) \mapsto y(f,y_0,y_1)$ is linear and continuous from $L^1(0,T;H)\times V \times H$ into $C([0,T];V)\cap C^1([0,T];H)$.

%For $(f,y_0,y_1)\in W^{1,1}(I;H)\times \left(H^2(\Omega)\cap V\right)\times V$, the weak solution $y=y(f,y_0,y_1)$ has the regularity
%$
%y\in
%C([0,T];H^2(\Omega)\cap V) \cap$ $
%C^1([0,T];V) \cap $ $  C^2([0,T];H) \cap
%W^{3,1}(I;V^*)
%$.

In particular, there exists a constant $c>0$ such that for all $(f,y_0,y_1) \in L^1(I;H) \times V \times H$ the unique weak solution $y=y(f,y_0,y_1)$ satisfies
\begin{align}\label{EnergyEsti}
\begin{matrix}
\|y\|_{\mathcal{C}(\overline{I};V)} + \|\partial_t y\|_{\mathcal{C}(\overline{I};H)} \leq c\left( \|f\|_{L^1(I;H)} + \|y_0\|_{V}  +\|y_1\|_{L^2}\right)
\end{matrix}
\end{align}
%Furthermore, for all $(f,y_0,y_1) \in W^{1,1}(I;H)\times \left(H^2(\Omega)\cap V\right)\times V$ holds,
%\begin{align*}
%\begin{matrix}
%\|y\|_{\mathcal{C}(\overline{I};H^2 \cap V)} +\|\partial_t y\|_{\mathcal{C}(\overline{I};V)} + \|\partial_{tt} y\|_{\mathcal{C}(\overline{I};H)}+T_1^{-1}\|\partial_{ttt} y\|_{L^{1}(I;V^*)}\\ \\ \leq c\left( \|\partial_t f\|_{L^1(I;H)}+\|f(0)\|_{L^2} + \|y_0\|_{H^2 \cap V}  +\|y_1\|_{V}\right)
%\end{matrix}
%\end{align*}
%with $T_1=\max\left\lbrace 1, T \right \rbrace$, in particular we have
%\begin{align*}
%\begin{matrix}
%y\in \mathcal{C}(\overline{I};H^2 \cap V) \cap \mathcal{C}^1(\overline{I};V)\cap \mathcal{C}^2(\overline{I};H) \cap W^{3,1}(I;V^*).
%\end{matrix}
%\end{align*}

\end{theorem}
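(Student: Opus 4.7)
The plan is to use the classical Galerkin approximation scheme together with the energy method, as in Lions--Magenes or Ladyzhenskaya. First, choose an orthonormal basis $\{w_k\}_{k\in\mathbb{N}}\subset V$ of $H$ consisting of eigenfunctions of $-\bigtriangleup$ with homogeneous Dirichlet data (also orthogonal in $V$), set $V_N:=\Span\{w_1,\ldots,w_N\}$, and look for $y_N(t)=\sum_{k=1}^N c_k^N(t)w_k$ satisfying, for each $k\le N$,
\begin{align*}
\left\langle \partial_{tt}y_N(t),w_k\right\rangle_H+\left\langle \bigtriangledown y_N(t),\bigtriangledown w_k\right\rangle_H=\left\langle f(t),w_k\right\rangle_H,
\end{align*}
with $y_N(0)$ and $\partial_t y_N(0)$ the $V$- and $H$-projections of $y_0$ and $y_1$ onto $V_N$. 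This reduces to a linear second-order ODE system with $L^1$ right-hand side, globally solvable on $[0,T]$ by Carath\'eodory's theorem.

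The central step is the energy identity. Multiplying the $k$-th Galerkin equation by $\dot c_k^N(t)$ and summing over $k\le N$ gives
\begin{align*}
\tfrac{1}{2}\tfrac{d}{dt}\bigl(\|\partial_t y_N\|_H^2+\|\bigtriangledown y_N\|_H^2\bigr)=\left\langle f,\partial_t y_N\right\rangle_H.
\end{align*}
Writing $E_N(t):=\|\partial_t y_N(t)\|_H^2+\|\bigtriangledown y_N(t)\|_H^2$ and integrating yields $E_N(t)\le E_N(0)+2\int_0^t\|f(s)\|_H\sqrt{E_N(s)}\,ds$. A Gronwall-type argument adapted to $L^1$-forcing (Bihari's inequality applied to $\sqrt{E_N}$) then gives
\begin{align*}
\sqrt{E_N(t)}\le \sqrt{E_N(0)}+\int_0^t\|f(s)\|_H\,ds,
\end{align*}
and since $E_N(0)\le \|y_0\|_V^2+\|y_1\|_H^2$, this yields the estimate \eqref{EnergyEsti} uniformly in $N$.

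The uniform bound produces a subsequence with $y_N\tostar y$ in $L^\infty(I;V)$ and $\partial_t y_N\tostar \partial_t y$ in $L^\infty(I;H)$. Passing to the limit in the Galerkin formulation tested against $\eta\in C^1([0,T];V_M)$ with $\eta(T)=0$, and then using the density of $\bigcup_M V_M$ in $V$, shows that $y$ satisfies Definition \ref{weakDef}. Upgrading from $L^\infty$ to $C$-regularity is done by first establishing the result for smooth data (where the Galerkin approximants converge strongly and the limiting energy identity holds in classical sense) and then extending by the density of smooth data together with \eqref{EnergyEsti} applied to differences. Linearity follows from the linear structure of the Galerkin scheme, and continuity of $(f,y_0,y_1)\mapsto y$ into $C([0,T];V)\cap C^1([0,T];H)$ is the statement of \eqref{EnergyEsti} itself.

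The main obstacle is uniqueness: one cannot simply multiply the weak formulation by $\partial_t y$ because $\partial_t y$ is not in the admissible test class of Definition \ref{weakDef}. This mismatch is exactly why the Galerkin route is essential rather than a direct variational argument. To handle uniqueness one uses the Ladyzhenskaya test function: for fixed $s\in(0,T]$ define $\eta(t):=\int_t^s y(\tau)\,d\tau$ for $t\le s$ and $\eta(t):=0$ for $t>s$. This $\eta$ is admissible in Definition \ref{weakDef}, and a direct calculation in the case $f=0$, $y_0=0$, $y_1=0$ reorganizes into an energy identity that forces $y\equiv 0$ on $[0,s]$; since $s\in(0,T]$ was arbitrary, uniqueness follows.
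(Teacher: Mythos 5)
Your proposal is correct in substance, but be aware that the paper itself offers no argument for this theorem: it is imported wholesale from \cite[Proposition 1.1]{[Zlot]}, within the weak-solution framework of \cite[Chap.~IV]{[La68]}, so what you have written is essentially the classical proof that stands behind the citation rather than an alternative to an in-paper argument. Your route --- Galerkin approximation by Dirichlet eigenfunctions, the energy identity, the square-root (Bihari-type) Gronwall lemma which is exactly where the $L^1(I;H)$-in-time regularity of the forcing enters, weak-* passage to the limit, a smooth-data density argument to upgrade $L^\infty$ to genuine $C([0,T];V)\cap C^1([0,T];H)$ regularity, and uniqueness via the Ladyzhenskaya test function $\eta(t)=\int_t^s y(\tau)\,d\tau$ (which is admissible in Definition \ref{weakDef} and yields $\tfrac12\|y(s)\|_H^2+\tfrac12\|\nabla\eta(0)\|_{L^2(\Omega)}^2=0$) --- is sound, and your remark that $\partial_t y$ itself is not an admissible test function correctly identifies why the direct variational shortcut fails. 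If you were to write this out in full, three routine points should be made explicit: the identification of the initial datum $y\vert_{t=0}=y_0$ in the limit (from $y_N(0)\to y_0$ in $V$ together with the weak-* convergences); the density step for test functions, where the key observation is that $\eta(T)=0$ gives $\|\eta(0)\|_H\le\|\partial_t\eta\|_{L^1(I;H)}$, so all terms of the weak formulation are continuous on the test class and testing with $C^1([0,T];V_M)$ functions vanishing at $T$ suffices; and the claim that smooth data already produce solutions in $C([0,T];V)\cap C^1([0,T];H)$, which needs a higher-order energy estimate (or semigroup theory) that you only gesture at. What the paper's citation buys is brevity and consistency with \cite{[Zlot]}, which also supplies the finite-element framework used in Section \ref{numericsection}; what your proof buys is a self-contained derivation of \eqref{EnergyEsti} with the $L^1(I;H)$ norm of $f$ on the right-hand side, which is precisely the form of the estimate exploited later (e.g., in the bound for $M_j$ and in Proposition \ref{DoesNotWorkWithKappaBigger0}).
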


%The constant $c>0$ in the energy estimations is depending on the smallest eigenvalue of the extended Laplace operator $-\bigtriangleup:V\rightarrow V^*$ with Dirichlet zero boundary on $\partial \Omega$.
For the proof we refer to \cite[Proposition 1.1]{[Zlot]}.
% on page 159 et seq. and on page 163 et seq.
%\begin{theorem}{(Shifted regularity)}\label{Shifted regularity_label}\\ \\
%For $(f,y_0,y_1)\in L^2(I;V)\times H^2(\Omega)\cap V\times V$ we have
%\begin{align*}
%	\begin{matrix}
%	y\in C(\overline{I};H^2(\Omega)\cap V) \cap C^1(\overline{I};V)\cap W^{2,2}(I;H)
%	\end{matrix}
%\end{align*}
%\end{theorem}
%The shifted regularity property can be found in \cite{[KuTrVe]} in Theorem 4.3. or in \cite{[LaTrLi]} in Theorem 2.6.\\
%The proof uses mainly the interpolation of the continuous data-to-wave operator between low/high data regularity to low/high solution regularity.\\ \\
\begin{definition}
	Let us define the following continuous linear operators
	\begin{align*}
	\begin{matrix}
	\begin{matrix}
	L:& L^2(\Omega_T) & \rightarrow  & L^2(\Omega_T)\\
	& f & \mapsto & y(f,0,0)
	\end{matrix} & \text{ and } &
	\begin{matrix}
	Q:& V\times H & \rightarrow  & L^2(\Omega_T)\\
	&(y_0,y_1) & \mapsto & y(0,y_0,y_1)
	\end{matrix}
	\end{matrix}
	\end{align*}

	Furthermore, we define the continuous affine linear solution operator $\tilde{S}$:
	\begin{align}\label{solution_op_def123}
	\begin{matrix}
	\begin{matrix}
	\tilde{S}:L^2(I)^m &\rightarrow & L^2(\Omega_T)\\
	u & \mapsto & L(ug)+Q(y_0,y_1)
	\end{matrix} & \text{with $ug=\sum\limits_{j=1}^m u_jg_j$.}
	\end{matrix}
	\end{align}
\end{definition}
%\begin{proposition}\label{Diff_S}
%The solution operator $S$ is of class $C^\infty$, and $S'(u)v=z_v$ for $v\in L^2(0,T)^m$ with
%\begin{align*}
%\begin{matrix}
%\left\lbrace\begin{matrix}
%\partial_{tt} z_v - \bigtriangleup z_v =  \sum\limits_{j=1}^m v_jg_j  &\text{ in }&(0,T) \times \Omega \\
%z_v = 0  &\text{ on }&(0,T)\times \partial \Omega \\
%(z_v,\partial_t z_v) =   (0 , 0)  &\text{ in }& \lbrace 0 \rbrace \times \Omega
%\end{matrix}\right.
%\end{matrix}
%\end{align*}
%or equivalently $z_v=L(vg)$.
%\end{proposition}
%\begin{proof}
%For $u\in L^p(0,T)^m$ fixed, we have for all $v\in L^2(0,T)^m$:
%\begin{align*}
%\begin{matrix}
%\frac{1}{\tau} \left( S(u+\tau v)-Su \right) = \frac{1}{\tau} (L(ug)+L(\tau vg)+Q(y_0,y_1)-L(ug)-Q(y_0,y_1))=L(vg)
%\end{matrix}
%\end{align*}
%This implies, that the Gateaux derivative of $S$ in $u$ in direction $v$, $S'(u)v$, is the solution of the wave equation with zero initial data, and forcing function $vg$. $S$ is of class $C^\infty$ because $S $ is affine.
%\end{proof}

\begin{lemma}\label{adjointLemma}
	The action of the adjoint operator $L^*$ is given by
	\begin{align}\label{adj_funct_wave}
	\begin{matrix}
	L^*:L^2(\Omega_T)\rightarrow L^2(\Omega_T)\\ \\
	w \mapsto p
	\end{matrix}
%	 & & \text{ with $p$ as the weak solution of} & &
%	\left\lbrace\begin{matrix}
%	\partial_{tt} p - \bigtriangleup p = w &\text{ in }&(0,T) \times \Omega \\
%	p = 0  &\text{ on }&(0,T)\times \partial \Omega \\
%	(p,\partial_t p) =   (0 , 0)  &\text{ in }& \lbrace T \rbrace \times \Omega
%	\end{matrix}\right. .
	\end{align}
\end{lemma}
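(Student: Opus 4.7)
The plan is to identify $p=L^*w$ as the weak solution of the backward (adjoint) wave equation driven by $w$ with zero terminal data, namely
\begin{align*}
\partial_{tt} p - \triangle p = w \ \text{ in } \Omega_T, \quad p=0 \ \text{ on } (0,T)\times \partial\Omega, \quad (p,\partial_t p)\vert_{t=T} = (0,0).
\end{align*}
The proof then splits naturally into three steps: (i) show that $p$ is well defined and lies in $L^2(\Omega_T)$; (ii) establish the continuity of the map $w\mapsto p$; and (iii) verify the duality identity $\langle Lf,w\rangle_{L^2(\Omega_T)}=\langle f,p\rangle_{L^2(\Omega_T)}$ for every $f\in L^2(\Omega_T)$.

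For step (i), since $L^2(\Omega_T)\hookrightarrow L^1(I;H)$, a time-reversal substitution $\tau = T-t$ turns the backward problem into a forward problem of the type covered by Theorem \ref{RegTheoremWave}, so we obtain a unique weak solution $p\in C(\overline{I};V)\cap C^1(\overline{I};H)$ with zero terminal data. In particular $p\in L^2(\Omega_T)$. Step (ii) follows immediately from the energy estimate \eqref{EnergyEsti} applied to this reversed problem, yielding $\|p\|_{L^2(\Omega_T)}\leq c\|w\|_{L^1(I;H)}\leq c'\|w\|_{L^2(\Omega_T)}$, which shows that $w\mapsto p$ is a continuous linear operator on $L^2(\Omega_T)$.

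The heart of the argument is step (iii). Given $f\in L^2(\Omega_T)$, set $y=Lf=y(f,0,0)$. Because $p$ belongs to $C(\overline{I};V)\cap C^1(\overline{I};H)$ with $p\vert_{t=T}=0$, it is an admissible test function in the weak formulation of Definition \ref{weakDef} for $y$, giving
\begin{align*}
-\int_I\langle \partial_t y,\partial_t p\rangle_H \dd t+\langle \nabla y,\nabla p\rangle_{L^2(\Omega_T)} = \int_I\langle f,p\rangle_H \dd t,
\end{align*}
where the initial-data boundary term vanishes since $y_1=0$. Symmetrically, $y$ satisfies $y\vert_{t=0}=0$ and has the regularity required to be a test function for the backward equation for $p$ (after the time reversal); using it yields
\begin{align*}
-\int_I\langle \partial_t y,\partial_t p\rangle_H \dd t+\langle \nabla y,\nabla p\rangle_{L^2(\Omega_T)} = \int_I\langle w,y\rangle_H \dd t,
\end{align*}
with the terminal-data contribution vanishing since $(p,\partial_t p)\vert_{t=T}=(0,0)$. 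Equating the two right-hand sides gives $\langle Lf,w\rangle_{L^2(\Omega_T)}=\langle f,p\rangle_{L^2(\Omega_T)}$, i.e.\ $L^*w=p$, as claimed.

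The main obstacle I anticipate is the rigorous justification of using $y$ and $p$ as test functions in each other's weak formulations, since the definition in \ref{weakDef} is stated with one-sided terminal conditions on the test function. A clean remedy is to first prove the identity for smooth data $f\in C_c^\infty(\Omega_T)$ and $w\in C_c^\infty(\Omega_T)$, where the corresponding states are classical and all integration by parts in time is elementary, and then pass to the $L^2(\Omega_T)$-limit using the continuity of $L$ and of $w\mapsto p$ established in step (ii).
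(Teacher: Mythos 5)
Your argument is correct, and there is nothing in the paper to compare it against: the paper states Lemma \ref{adjointLemma} together with the formula $p(t,x)=y(w(T-\cdot),0,0)(T-t,x)$ but gives no proof. Your identification of $p$ as the weak solution of the backward problem with homogeneous terminal data coincides, after the substitution $\tau=T-t$, exactly with that formula, and well-posedness together with the bound $\|p\|_{L^2(\Omega_T)}\leq c\,\|w\|_{L^2(\Omega_T)}$ follow from Theorem \ref{RegTheoremWave} and \eqref{EnergyEsti} as you indicate.

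One simplification: the approximation step you flag as the main obstacle is not needed. Definition \ref{weakDef} only asks of a test function that $\eta\in L^1(I;V)$, $\partial_t\eta\in L^1(I;H)$ and that $\eta$ vanish at the relevant endpoint; both $y=Lf$ and $p$ have the regularity $C(\overline{I};V)\cap C^1(\overline{I};H)$, and the endpoint conditions $p\vert_{t=T}=0$ and $y\vert_{t=0}=0$ are exactly what is required to use $p$ as a test function for $y$ and (after time reversal) $y$ as a test function for $p$; the boundary terms drop because $y_1=0$ for $y$ and the reversed problem for $p$ has zero initial velocity. Hence the identity $\langle Lf,w\rangle_{L^2(\Omega_T)}=\langle f,p\rangle_{L^2(\Omega_T)}$ follows directly from the two weak formulations, with no density argument. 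If you do retain the smoothing fallback, be careful that on a merely Lipschitz domain $C_c^\infty$ data need not yield classical solutions, so the direct weak-formulation argument is also the cleaner one.
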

with $p(t,x)=y(w(T-\cdot),0,0)(T-t,x)$.

	\subsection{Preliminaries on $BV$ Functions in Time}
	Concerning BV-functions in one scalar variable we refer to \cite{[AmFuPa]}. In this section we only recall a few facts which we frequently refer to:\\ \\
	A sequence $(u_k) \subset BV(I)$ is said to converge weakly* in $BV(I)$ to $u$ if $(u_k)$ converges to $u$ in $L^1(I)$, and the measures $(Du_k)$ converge weakly* in the measure space $M(I)$ to $Du$, i.e. $\lim\limits_{k\rightarrow \infty} \int\limits_{0}^T \varphi Du_k = \int\limits_{0}^T \varphi Du $ for every $\varphi \in C_0(I)$.

	For all bounded sequences $(u_k)_k\subset BV(I)$ there exists a weakly* convergent sub-sequence with limit $u\in BV(I)$.

	 A weakly*-converging sequence $(u_k)_k$ in $BV(I)$ with limit $u$ is also strongly converging in $L^p(I)$ for $1\leq p<\infty$ to $u$.

	\label{lsc_of_TV}
	The total variation functional $\|D_t \cdot\|_{M(I)}:L^1(I)\rightarrow \overline{\mathbb{R}}:=\mathbb{R}\cup \lbrace \infty \rbrace$ is convex and lower semi-continuous with respect to $L^1(I)-$convergence \cite[Proposition 3.6]{[AmFuPa]}.

	A sequence $(u_k) \subset BV(I)$ is said to converge strictly in $BV(I)$ to $u$ if $(u_k)$ converges strongly in $L^1(I)$ and $\|D_tu_k\|_{M(I)} \xrightarrow{k\rightarrow \infty}\|D_tu\|_{M(I)}$. Strictly converging sequences in $BV(I)$ are also weakly* converging in $BV(I)$, see \cite[p. 126]{[AmFuPa]}.

	The following BV-Poincaré inequality holds:
	\begin{lemma}[\cite{[AmFuPa]}, page 152] There exists $c>0$ such that for all $u\in BV(I)$ and $1\leq\sigma\leq\infty$ we have
		$\|u - a\|_{L^\sigma(I)}\leq c \|D_tu\|_{M(I)}$, with  $a:=\frac{1}{T}\int\limits_I u dx$.
		
	\end{lemma}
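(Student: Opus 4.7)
The plan is to reduce everything to the $\sigma = \infty$ case and then use the finiteness of $|I|=T$ to obtain the other exponents. The key observation is that every $u\in BV(I)$ admits a pointwise representative (say the right-continuous one) for which one has the elementary oscillation bound
\begin{align*}
|u(t) - u(s)| \le \|D_t u\|_{M(I)} \quad \text{for all } t,s \in I,
\end{align*}
because $u(t)-u(s) = D_tu((s,t])$ (or $-D_tu((t,s])$) and $|D_tu(B)| \le \|D_tu\|_{M(I)}$ for every Borel set $B\subset I$.

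First, I would fix such a representative and integrate the inequality with respect to $s$ over $I$. Dividing by $T$, one gets
\begin{align*}
|u(t) - a| = \Big|\frac{1}{T}\int_I \bigl(u(t)-u(s)\bigr)\,ds\Big| \le \frac{1}{T}\int_I |u(t)-u(s)|\,ds \le \|D_t u\|_{M(I)},
\end{align*}
uniformly in $t\in I$. This gives the desired inequality for $\sigma=\infty$ with constant $1$.

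Second, for $1\le \sigma<\infty$ I would simply use Hölder/the finite measure of $I$:
\begin{align*}
\|u-a\|_{L^\sigma(I)} \le T^{1/\sigma}\|u-a\|_{L^\infty(I)} \le T^{1/\sigma}\|D_tu\|_{M(I)} \le \max(1,T)\,\|D_tu\|_{M(I)},
\end{align*}
so that $c:=\max(1,T)$ works uniformly for all $\sigma\in[1,\infty]$.

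The main subtlety, rather than an obstacle, is the choice of representative: the inequality $\|u-a\|_{L^\sigma}\le c\|D_tu\|_{M(I)}$ is insensitive to modifications on null sets, so one is free to select the right-continuous (or left-continuous) representative, for which the pointwise identity $u(t)-u(s)=D_tu((s,t])$ holds everywhere. With that choice fixed, the rest is a one-line averaging argument and a Hölder estimate on an interval of finite length.
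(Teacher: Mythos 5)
Your argument is correct. The identity $u(t)-u(s)=D_tu((s,t])$ does hold for the right-continuous representative (this is the one-dimensional structure theorem for $BV$ functions, \cite[Theorem 3.28]{[AmFuPa]}), the bound $|D_tu((s,t])|\le\|D_tu\|_{M(I)}$ is immediate, and since both the mean value $a$ and the $L^\sigma$-norms are insensitive to modifications on null sets, the averaging step and the passage from $\sigma=\infty$ to general $\sigma$ via $\|u-a\|_{L^\sigma(I)}\le T^{1/\sigma}\|u-a\|_{L^\infty(I)}\le\max(1,T)\,\|D_tu\|_{M(I)}$ are all sound, with a constant uniform in $\sigma$ as required.

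Note that the paper does not prove this lemma at all: it simply cites \cite[p.~152]{[AmFuPa]}, where the Poincar\'e inequality is established for $BV$ functions on domains in arbitrary dimension, essentially by strict approximation with smooth functions and the classical Sobolev--Poincar\'e inequality for $W^{1,1}$. Your route is genuinely different and more elementary: it exploits the one-dimensional setting, where a good pointwise representative turns the total variation directly into a uniform oscillation bound, yielding even the $L^\infty$ case (which in higher dimensions fails) and the explicit constant $c=\max(1,T)$. What the citation buys is generality (all dimensions, general $\sigma$ up to the critical exponent); what your argument buys is a short self-contained proof with an explicit constant, perfectly adequate for the interval $I=(0,T)$ used throughout the paper.
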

%	\begin{lemma}{\cite[Theorem 3.28]{[AmFuPa]}}
%	 Let $u\in BV(I)$ and define $\tilde{A}:=\lbrace s\in I \vert D_tu(\lbrace s \rbrace)\neq 0 \rbrace$. Then there exists a unique $c\in \mathbb{R}$ such that $u^l(t):=c+D_tu((0,t))$ is left continuous, and $u^r(t):=c+D_tu((0,t])$ is right continuous.
%	\end{lemma}
%	 For each $u\in BV(I)$, we call $u^l$ and $u^r$ good representatives. It can be shown, that $\|u-u^l\|_{BV(I)}=\|u-u^r\|_{BV(I)}$. Let us note that $\tilde{A}$ is at most countable, see \cite[p.139]{[AmFuPa]}.
	\begin{lemma} \label{isoBV}
		For each $m\in \mathbb{N}_{>0}$ the map $u  \rightarrow  (D_tu,u(0))$ is an isomorphism from $BV(I)^m$ to $M(I)^m\times \mathbb{R}^m$ with inverse $(v,c)\rightarrow \int\limits_{[0,t)} dv+c$. A similar result holds for $H^1(I)^m$ and $L^2(I)^m\times\mathbb{R}^m$.
	\end{lemma}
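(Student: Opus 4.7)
The plan is to treat the case $m=1$ first, since the map acts componentwise and the general statement then follows by taking products (with the norms on $M(I)^m\times\mathbb{R}^m$ and $BV(I)^m$ being the natural product norms). Throughout, one must fix a pointwise representative of each $u\in BV(I)$ so that $u(0)$ is unambiguous; I would use the left-continuous representative, so that $u(t)=u(0)+D_tu([0,t))$ holds for every $t\in I$. With this convention the evaluation $u\mapsto u(0)$ and the distributional derivative $u\mapsto D_tu\in M(I)$ are both well-defined linear maps, and continuity of $\Phi:u\mapsto(D_tu,u(0))$ is immediate: $\|D_tu\|_{M(I)}$ is part of the $BV$-norm, while $|u(0)|\le\|u\|_{L^\infty(I)}\le C\|u\|_{BV(I)}$ by the continuous embedding $BV(I)\hookrightarrow L^\infty(I)$ (or, alternatively, by averaging the identity $u(0)=u(t)-D_tu([0,t))$ over $t\in I$ and using the BV-Poincaré inequality quoted just above).

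Next I would construct the candidate inverse $\Psi(v,c)(t):=c+v([0,t))$ and verify it maps into $BV(I)$ with $D_t\Psi(v,c)=v$ and $\Psi(v,c)(0)=c$. The crucial computation is that for any partition $0=t_0<t_1<\dots<t_N=T$,
\begin{align*}
\sum_{k=1}^N|\Psi(v,c)(t_k)-\Psi(v,c)(t_{k-1})|=\sum_{k=1}^N|v([t_{k-1},t_k))|\le\|v\|_{M(I)},
\end{align*}
which shows $\Psi(v,c)\in BV(I)$ and $\|D_t\Psi(v,c)\|_{M(I)}\le\|v\|_{M(I)}$; equality of $D_t\Psi(v,c)$ and $v$ as measures follows by testing against characteristic functions of intervals and invoking a monotone class / Dynkin argument, since both measures agree on all half-open intervals $[a,b)\subset I$. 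From this and the product-norm estimate $\|\Psi(v,c)\|_{BV(I)}\le T|c|+(T+1)\|v\|_{M(I)}$, continuity of $\Psi$ is clear.

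It remains to check $\Phi\circ\Psi=\mathrm{id}$ and $\Psi\circ\Phi=\mathrm{id}$. The first is the identity $v([0,0))=0$ together with the derivative computation above. For the second, given $u\in BV(I)$, both $u$ and $\Psi(D_tu,u(0))$ are left-continuous representatives satisfying $w(t)=u(0)+D_tu([0,t))$, so they coincide pointwise on $I$. Injectivity and surjectivity follow, and since $\Phi$ and $\Psi=\Phi^{-1}$ are both bounded linear (one could alternatively invoke the open mapping theorem at this point), $\Phi$ is a topological isomorphism.

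The $H^1$ statement is analogous and in fact easier: for $u\in H^1(I)$ one has $u\in C(\overline I)$ by Sobolev embedding, so $u(0)$ is well-defined, and the inverse is $(v,c)\mapsto c+\int_0^{(\cdot)}v(s)\,\mathrm ds$, whose image lies in $H^1(I)$ by construction. The norm equivalence $\|u\|_{H^1(I)}\sim|u(0)|+\|u'\|_{L^2(I)}$ follows from the usual Poincaré-type estimate $\|u-u(0)\|_{L^2(I)}\le C\|u'\|_{L^2(I)}$. The main obstacle in the whole argument is really the bookkeeping around the pointwise representative and the identification of $D_t\Psi(v,c)$ with $v$; once the left-continuous convention is fixed, everything else is routine.
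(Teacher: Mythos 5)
Your argument is correct. Note, however, that the paper itself gives no proof of this lemma at all: it is stated as a standard fact about one-dimensional $BV$ functions (the identification $BV(I)\cong M(I)\times\mathbb{R}$ that is then used to pass from $(P)$ to $(\tilde P)$), so there is no "paper proof" to compare against; your write-up simply supplies the missing routine verification. Two small points deserve attention. First, you are right that the whole statement only makes sense after fixing a pointwise representative; with $I=(0,T)$ open, ``$u(0)$'' should be read as the trace $\lim_{t\to 0^+}u(t)$ of the good (left-continuous) representative, which is exactly the convention under which your identity $u(t)=u(0)+D_tu([0,t))$ and the bound $|u(0)|\le\|u\|_{L^\infty(I)}\le C\|u\|_{BV(I)}$ hold; this matches how the paper uses $u(0)$ in $(P^1_\gamma)$ and in the operator $B$. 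Second, your identification of $D_t\Psi(v,c)$ with $v$ via agreement on half-open intervals implicitly uses the standard formula $D_tw((a,b))=w(b^-)-w(a^+)$ for good representatives of one-dimensional $BV$ functions (e.g.\ \cite[Theorem 3.28]{[AmFuPa]}); citing it, or replacing that step by the direct Fubini computation
\begin{align*}
\int_I \Psi(v,c)(t)\,\varphi'(t)\,dt=\int_I\int_I \mathbf{1}_{\{s<t\}}\varphi'(t)\,dt\,dv(s)=-\int_I\varphi\,dv,\qquad \varphi\in C_c^1(I),
\end{align*}
would remove the only slightly glossed point. With either repair the proof is complete, and the $H^1$/$L^2\times\mathbb{R}$ case is handled correctly by the same scheme.
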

		
	\section{Analysis of the Optimal Control Problem (P)}\label{AnalysofP}
	In the following we show the existence of a unique solution of $(P)$. Furthermore, we will introduce a problem $(\tilde{P})$ which is equivalent to $(P)$, for which the first-order optimality conditions are derived. These optimality conditions will be used to present sparsity results for the optimal control of $(P)$.
	\begin{theorem}\label{ExistenceSoluti}
		Problem $(P)$ has a unique solution $\overline{u}\in BV(I)^m$.
	\end{theorem}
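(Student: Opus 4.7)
The plan is to apply the direct method of the calculus of variations, then use convexity for uniqueness. Since $J\ge 0$, the infimum $j^*:=\inf J$ exists. Take a minimizing sequence $(u_k)\subset BV(I)^m$. The nontrivial step is to pass from boundedness of $J(y_{u_k},u_k)$ to a bound on $\|u_k\|_{BV(I)^m}$; the cost only controls $\|D_tu_{k,j}\|_{M(I)}$ directly, so the $L^1$-part of the norm must be recovered from the tracking term. This is where the main obstacle lies.

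For coercivity I would split $u_{k,j}=a_{k,j}+\tilde u_{k,j}$ with $a_{k,j}:=\frac{1}{T}\int_I u_{k,j}\,dt$. The BV–Poincaré inequality quoted in Section 2.2 gives $\|\tilde u_{k,j}\|_{L^\infty(I)}\le c\|D_t u_{k,j}\|_{M(I)}$, which is bounded by $J(y_{u_k},u_k)/\alpha_j$. Writing $z_j:=L(g_j)\in L^2(\Omega_T)$, one has
\begin{equation*}
y_{u_k}= \sum_{j=1}^m a_{k,j}\,z_j \;+\; L\!\Big(\sum_{j=1}^m \tilde u_{k,j} g_j\Big)\;+\;Q(y_0,y_1),
\end{equation*}
and by \eqref{EnergyEsti} the last two terms are bounded in $L^2(\Omega_T)$ uniformly in $k$. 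The functions $z_j$ are linearly independent in $L^2(\Omega_T)$: $L$ is injective (from the weak formulation and density of test functions $\eta\in C_c^\infty((0,T);V)$ in Definition \ref{weakDef}), and the $g_j$ have pairwise disjoint supports, so $\sum c_j g_j=0$ forces $c_j=0$. Hence there is $\delta>0$ with $\|\sum_j a_{k,j}z_j\|_{L^2(\Omega_T)}\ge\delta |a_k|$, and the uniform bound on $\|y_{u_k}-y_d\|_{L^2}$ yields a uniform bound on $|a_k|$. Combined with the BV–Poincaré estimate this proves $\|u_k\|_{BV(I)^m}\le C$.

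For existence I would then extract a weakly* convergent subsequence $u_k\rightharpoonup^* \bar u$ in $BV(I)^m$. By the properties listed in Section 2.2, $u_k\to\bar u$ in $L^1(I)^m$, and since the bound above is actually in $L^\infty$, we also get convergence in $L^2(I)^m$. The continuity of $\tilde S$ then yields $y_{u_k}\to y_{\bar u}$ in $L^2(\Omega_T)$, so the tracking term passes to the limit. The BV-seminorm is $L^1$-lower semicontinuous by Proposition 3.6 of \cite{[AmFuPa]}, so
\begin{equation*}
J(y_{\bar u},\bar u)\le\liminf_{k\to\infty} J(y_{u_k},u_k)= j^*,
\end{equation*}
showing $\bar u$ is a minimizer.

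For uniqueness I would argue by strict convexity. The map $u\mapsto y_u$ is affine, and it is injective: if $y_{u^1}=y_{u^2}$ then $L\!\big(\sum_j(u^1_j-u^2_j)g_j\big)=0$, injectivity of $L$ gives $\sum_j(u^1_j-u^2_j)g_j=0$ in $L^2(\Omega_T)$, and the disjoint-support hypothesis on the $g_j$ then forces $u^1_j=u^2_j$ a.e. Therefore $u\mapsto \tfrac12\|y_u-y_d\|_{L^2(\Omega_T)}^2$ is strictly convex, while $\sum_j\alpha_j\|D_tu_j\|_{M(I)}$ is convex; hence $J$ is strictly convex in $u$ and the minimizer $\bar u$ is unique.
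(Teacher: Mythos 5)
Your proof is correct and follows essentially the same route as the paper, which delegates the argument to \cite[Theorem 3.1]{[KCK]}: a direct method with the mean/oscillation splitting $u=a+\tilde u$, the BV--Poincar\'e inequality, injectivity of $L$ together with the disjoint supports of the $g_j$ to control the means through the tracking term, weak* compactness and lower semicontinuity for existence, and strict convexity via injectivity of the affine control-to-state map for uniqueness. Indeed, the same decomposition and injectivity argument appear verbatim in the paper's proof of Theorem \ref{weak*conv_reg_prob}, so you have merely written out in full what the paper leaves to the cited reference.
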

%	The following proof is adapted from \cite{[KCK]} to the case of the wave equation.
	\begin{proof} Utilizing the fact that the forward mapping is continuous from $L^2(I)^m$ to $L^2(\Omega_T)$, the proof can be carried out along the lines of \cite[Theorem 3.1]{[KCK]}.
	\end{proof}
	\subsection{Equivalent Problem $(\tilde{P})$}
	
	Consider the following linear and continuous operator:
	\begin{align}\label{B_red_to_L_2}
	\begin{matrix}
	\begin{matrix}
	B: & M(I)^m \times \mathbb{R}^m & \rightarrow & L^2(\Omega_T)\\
	& (v,c) & \mapsto & \sum\limits_{j=1}^m \left( \int\limits\limits_{[0,t)} dv_j + c_j \right)g_j
	\end{matrix}
	\end{matrix}
	\end{align}

	Using the identification of $BV(I)$ with $M(I)\times \mathbb{R}$, see Lemma \ref{isoBV}, and the fact that $BV(I)$ embeds into $L^2(I)$ we can rewrite $(P)$ as the equivalent problem:
	\begin{align*}
	(\tilde{P})\left\lbrace\begin{matrix}
	& \min\limits_{
		\begin{matrix}
		v\in M(I)^m \\
		c\in \mathbb{R}^m
		\end{matrix}} \frac{1}{2}\| \tilde{S}(v,c)-y_d \|_{L^2(\Omega_T)}^2 + \sum\limits_{j=1}^m \alpha_j \|v_j\|_{M(I)}=:\tilde{J}(v,c),
	\end{matrix}\right.
	\end{align*}
	where we have to modify the control to state operator $\tilde{S}$ to
	\begin{align*}
	\begin{matrix}
	S: & M(I)^m \times \mathbb{R}^m & \longrightarrow & L^2(\Omega_T)\\
	& (v,c) & \mapsto & L(B(v,c))+Q(y_0,y_1)
	\end{matrix}
	\end{align*}

	\subsection{First-Order Optimality Condition for $(\tilde{P})$}
In this section the necessary and sufficient first-order optimality conditions for $(\tilde{P})$ are presented. Furthermore, we show sparsity results for the optimal control of $(\tilde{P})$ respectively $(P)$. Let us begin with the following theorem:

\begin{theorem}\label{FirstOrder}
	The element $(\overrightarrow{v},\overrightarrow{c})\in M(I)^m\times\mathbb{R}^m$, is an optimal control for $(\tilde{P})$ if
	\begin{align}\label{p_1_i_function}
	\begin{matrix}
	-\begin{pmatrix}
	p_1(s)\\ p_1(0)
	\end{pmatrix}:=- \begin{pmatrix}
	\int\limits_{s}^T\int\limits_\Omega L^* \left( S\begin{pmatrix}
	\overrightarrow{v},\overrightarrow{c}
	\end{pmatrix}-y_d \right) \overrightarrow{g}dxdt \\ \\
	\int\limits_{\Omega_T} L^* \left( S\begin{pmatrix}
	\overrightarrow{v},\overrightarrow{c}
	\end{pmatrix}-y_d \right) \overrightarrow{g}dxdt
	\end{pmatrix}\in \begin{pmatrix}
	\left( \alpha_i \partial \| \overrightarrow{v}_i \|_{M(I)}\right)_{i=1}^m \\
	0_{\mathbb{R}^m}
	\end{pmatrix}
	\end{matrix}
	\end{align}
	where $p_1\in C^2([0,T])^m$. This first-order optimality condition is equivalent to:
For all $i=1,...,m$ and $v\in M(I)$ it holds that
	\begin{align*}
		\begin{matrix}
		\langle v-\overrightarrow{v}_i,-p_{1,i}\rangle_{M(I),C_0(I)} \leqslant \alpha_i \|v\|_{M(I)} - \alpha_i \|\overrightarrow{v}_i\|_{M(I)} \text{ and } p_1(0)=0_{\mathbb{R}^m}.
		\end{matrix}
	\end{align*}
\end{theorem}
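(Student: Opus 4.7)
The plan is to exploit the convexity of $\tilde{J}$ together with Fermat's rule. Since $S$ is continuous and affine, the tracking term $F(v,c):=\tfrac{1}{2}\|S(v,c)-y_d\|_{L^2(\Omega_T)}^2$ is convex and Gateaux-differentiable, and since $\sum_i \alpha_i \|v_i\|_{M(I)}$ is convex and continuous on $M(I)^m$ and does not depend on $c$, the minimality of $(\overrightarrow{v},\overrightarrow{c})$ is equivalent to
\[
F'(\overrightarrow{v},\overrightarrow{c})(\delta v,\delta c) + \sum_i \alpha_i\|\overrightarrow{v}_i+\delta v_i\|_{M(I)} - \sum_i \alpha_i\|\overrightarrow{v}_i\|_{M(I)} \geq 0 \quad \forall (\delta v,\delta c)\in M(I)^m\times\mathbb{R}^m.
\]
This simultaneously yields sufficiency (as asserted) and necessity of the conditions in the statement.

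The second step is to identify $F'(\overrightarrow{v},\overrightarrow{c})$ with the pair $(p_1,p_1(0))$ of the statement. Setting $w:=L^*(S(\overrightarrow{v},\overrightarrow{c})-y_d)\in L^2(\Omega_T)$ via Lemma~\ref{adjointLemma}, the chain rule gives $F'(\overrightarrow{v},\overrightarrow{c})(\delta v,\delta c) = \langle w, B(\delta v,\delta c)\rangle_{L^2(\Omega_T)}$. Inserting the explicit form of $B$ from \eqref{B_red_to_L_2} and swapping the order of integration in
\[
\int_0^T\!\!\int_\Omega\!\Big(\int_{[0,t)}d(\delta v)_j\Big) g_j(x) w(t,x)\,dx\,dt = \int_{[0,T)}\!\Big(\int_s^T\!\!\int_\Omega g_j w\,dx\,dt\Big) d(\delta v)_j(s),
\]
the bracket on the right is exactly $p_{1,j}(s)$ from \eqref{p_1_i_function}, and the $c$-summand contributes $\sum_j p_{1,j}(0)(\delta c)_j$.

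Substituting into the minimality inequality and first choosing $\delta v = 0$ gives $\sum_j p_{1,j}(0)(\delta c)_j \geq 0$ for arbitrary $\delta c \in \mathbb{R}^m$, hence $p_1(0)=0_{\mathbb{R}^m}$. With this in hand, $p_{1,j}$ lies in $C_0(I)$ (as $p_{1,j}(T)=0$ trivially), and the $v$-direction inequality reduces to
\[
\langle v-\overrightarrow{v}_i,-p_{1,i}\rangle_{M(I),C_0(I)} \leq \alpha_i\|v\|_{M(I)} - \alpha_i\|\overrightarrow{v}_i\|_{M(I)}\quad \forall v\in M(I),\ i=1,\dots,m,
\]
which is the claimed variational inequality and, by the very definition of the convex subdifferential in the $(C_0(I),M(I))$ duality, the claimed inclusion $-p_{1,i}\in \alpha_i \partial\|\overrightarrow{v}_i\|_{M(I)}$. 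The regularity $p_1\in C^2([0,T])^m$ is obtained from Theorem~\ref{RegTheoremWave} applied to the time-reversed adjoint wave problem: $w\in C([0,T];V)\cap C^1([0,T];H)$, so $t\mapsto \int_\Omega g_j w(t,\cdot)\,dx$ is $C^1$ on $[0,T]$ and the antiderivative $p_{1,j}$ is $C^2$.

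The main obstacle I anticipate is the duality bookkeeping: the gradient of the smooth part is a priori only an element of $(M(I)^m\times\mathbb{R}^m)^*$, whereas subgradients of $\|\cdot\|_{M(I)}$ live in the much smaller predual $C_0(I)$. Showing that the integral representation $p_{1,j}$ is continuous on the closed interval $[0,T]$, vanishes at $T$, and — once the $c$-component is exploited — also at $s=0$, so that the two sub-conditions couple consistently into a single subdifferential inclusion rather than remaining two unrelated statements, is the delicate step; once this identification is in place the remainder is standard convex analysis.
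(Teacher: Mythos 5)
Your proposal is correct and follows essentially the same route as the paper's appendix proof: convexity of the reduced cost plus the standard first-order criterion for convex problems, identification of the Gateaux derivative of the tracking term with $\bigl(p_1(\cdot),p_1(0)\bigr)$ via $L^*$ and $B$, and the resulting subdifferential inclusion read as the variational inequality in the $(C_0(I),M(I))$ duality. You additionally spell out details the paper leaves implicit (the Fubini swap identifying $p_{1,j}$, the extraction of $p_1(0)=0$ from the $\delta c$-direction, and the $C^2$ regularity via Theorem \ref{RegTheoremWave}), which is consistent with the paper's argument rather than a different one.
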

\begin{proof}
	The proof can be found in the appendix.
\end{proof}
\begin{lemma}\label{support_measure_cor}
	Let $(\overrightarrow{v},\overrightarrow{c})\in M(I)^m\times\mathbb{R}^m$ be an optimal control for $(\tilde{P})$. Then we have for all $i=1,\cdots,m$ and $p_{1}=(p_{1,i})_{i=1}^m$ given in (\ref{p_1_i_function}):
	\begin{itemize}
		\item[a)]$\|p_{1,i}\|_{C_{0}(I)}\leq \alpha_i$
		\item[b)]$\int\limits_I -\frac{p_{1,i}}{\alpha_i}dv_i=\int\limits_I d\vert \overrightarrow{v}_i\vert=\|\overrightarrow{v}_i\|_{M(I)}$
		\item[c)]$\supp(\overrightarrow{v}_i^{\pm})\subseteq \lbrace t \in I \vert p_{1,i}(t)=\mp \alpha_i\rbrace$
	\end{itemize}
\end{lemma}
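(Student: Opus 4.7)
The plan is to read the three claims as consequences of the subdifferential inclusion $-p_{1,i} \in \alpha_i \partial\|\overrightarrow{v}_i\|_{M(I)}$, which by Theorem \ref{FirstOrder} is equivalent to the variational inequality
\[
\langle v - \overrightarrow{v}_i, -p_{1,i}\rangle_{M(I),C_0(I)} \leq \alpha_i \|v\|_{M(I)} - \alpha_i \|\overrightarrow{v}_i\|_{M(I)} \quad \text{for all } v \in M(I).
\]
Before exploiting this, I would note that $p_{1,i}$ is genuinely an element of $C_0(I)$: by its integral representation it is continuous on $[0,T]$, satisfies $p_{1,i}(T)=0$ by construction, and $p_{1,i}(0)=0$ by the second component of the optimality system in (\ref{p_1_i_function}).

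To prove (b), I would test the variational inequality with $v = 0$ and $v = 2\overrightarrow{v}_i$. Choosing $v = 2\overrightarrow{v}_i$ gives $\langle \overrightarrow{v}_i, -p_{1,i}\rangle \leq \alpha_i \|\overrightarrow{v}_i\|_{M(I)}$; choosing $v=0$ gives the reverse inequality $\langle \overrightarrow{v}_i, -p_{1,i}\rangle \geq \alpha_i \|\overrightarrow{v}_i\|_{M(I)}$. Equating them and dividing by $\alpha_i>0$ yields (b). For (a), I would insert the identity just obtained back into the variational inequality, which collapses to $\langle v, -p_{1,i}\rangle \leq \alpha_i\|v\|_{M(I)}$ for every $v\in M(I)$. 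Taking the supremum over $v$ with $\|v\|_{M(I)} \leq 1$ and using that $M(I)=C_0(I)^*$ isometrically identifies the left-hand side supremum with $\|p_{1,i}\|_{C_0(I)}$; this gives (a).

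For (c), the idea is to combine (a) and (b) with the Jordan decomposition $\overrightarrow{v}_i = \overrightarrow{v}_i^+ - \overrightarrow{v}_i^-$. Rewriting (b) gives
\[
\int_I \Bigl(-\tfrac{p_{1,i}}{\alpha_i}\Bigr)\, d\overrightarrow{v}_i^+ + \int_I \Bigl(\tfrac{p_{1,i}}{\alpha_i}\Bigr)\, d\overrightarrow{v}_i^- = \overrightarrow{v}_i^+(I) + \overrightarrow{v}_i^-(I).
\]
By (a) the integrand $-p_{1,i}/\alpha_i$ is pointwise in $[-1,1]$, so each summand on the left is bounded above by the corresponding term on the right. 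The equality therefore forces equality in both estimates, i.e.\ $-p_{1,i}/\alpha_i \equiv 1$ $\overrightarrow{v}_i^+$-a.e.\ and $p_{1,i}/\alpha_i \equiv 1$ $\overrightarrow{v}_i^-$-a.e. Since $p_{1,i}$ is continuous, the sets $\{p_{1,i} = -\alpha_i\}$ and $\{p_{1,i} = \alpha_i\}$ are closed, and the support of a Radon measure is the smallest closed set carrying it, so the inclusions $\supp(\overrightarrow{v}_i^\pm)\subseteq \{p_{1,i} = \mp \alpha_i\}$ follow.

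The main technical subtlety, rather than any deep obstacle, is handling the duality pairing cleanly: one needs $p_{1,i} \in C_0(I)$ (using $p_{1,i}(0)=p_{1,i}(T)=0$) so that the pairing $\langle\cdot,-p_{1,i}\rangle_{M(I),C_0(I)}$ is defined and so that the supremum characterization of the $C_0(I)$-norm via test measures is applicable. Once this is established, steps (b)$\Rightarrow$(a)$\Rightarrow$(c) follow from the standard attainment-of-equality argument in the inequality $\int f\, d\mu \leq \|f\|_\infty \|\mu\|_{M(I)}$ for signed Radon measures.
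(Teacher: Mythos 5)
Your argument is correct and is essentially the proof the paper has in mind (it defers to the analogous \cite[Proposition 2.4]{[KCK]}): deduce (b) by testing the variational inequality from Theorem \ref{FirstOrder} with $v=0$ and $v=2\overrightarrow{v}_i$, deduce (a) from the resulting inequality $\langle v,-p_{1,i}\rangle\leq\alpha_i\|v\|_{M(I)}$ via the isometric duality $M(I)=C_0(I)^*$ (or simply Dirac measures), and obtain (c) from the attainment of equality in $\int f\,d\mu\leq\|f\|_\infty\|\mu\|_{M(I)}$ applied to the Jordan decomposition, using $p_{1,i}\in C_0(I)$ exactly as the paper notes after the appendix proof of Theorem \ref{FirstOrder}. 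No gaps to report.
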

The proof is analogous to the one of \cite[Proposition 2.4]{[KCK]}.
%Given $\alpha=(\alpha_j)_{j=1}^m$, denote by $\overrightarrow{u}_\alpha = (\overline{u}_{\alpha,j})_{j=1}^m$ the solution of $(P)$ and by $(\overline{y}_\alpha, \overline{p}_\alpha)$ the associated state and adjoint.
%We have the following for the semi-linear parabolic case in \cite{[KCK]}: If $\alpha_j$ is decreasing then the $BV(I)$ seminorm $\|\cdot \|_{M(I)}$ of $\overline{u}_{\alpha,j}$ is increasing. In fact, there is a threshold $M_j < \infty$ such that if $\alpha_j > M_j$ we get that $\overline{u}_{\alpha,j}'=0$, i.e. $\overline{u}_{\alpha,j}$ is constant in $[0,T]$. Moreover there exists a $\overline{\xi}\in \mathbb{R}^m$ such that for all $\alpha$ with $\alpha_j >M_j$ for $j=1,...,m$, the constant function $\overline{\xi}$ is a solution of $(P)$. Let us verify these claims for our problem by the following Corollary:
The following corollary which is similar to a result in \cite{[KCK]} exhibits an important structural property of the solution $\overline{u}_{\alpha,j}$ as a function of $\alpha_j$.
\begin{corollary}
There exists $M_j>0$ such that the j-th component $\overline{u}_{\alpha,j}$ of the optimal control $\overline{u}_{\alpha}$ of $(P)$ is constant in $BV(I)^m$ for all $\alpha_j>M_j$.
\end{corollary}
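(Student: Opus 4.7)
The plan is to exploit Lemma~\ref{support_measure_cor}(c), which constrains $\supp(\overrightarrow{v}_j^{\pm})$ to lie in the level sets $\{t \in I : p_{1,j}(t) = \mp \alpha_j\}$, together with an a priori bound on $\|p_{1,j}\|_{C([0,T])}$ that is independent of $\alpha_j$. Taking $M_j$ to be that bound, for every $\alpha_j > M_j$ the continuous function $p_{1,j}$ cannot attain $\pm\alpha_j$; the above level sets are then empty, so $\overrightarrow{v}_j = 0$, and $D_t \overline{u}_{\alpha,j} = 0$ means that $\overline{u}_{\alpha,j}$ is constant on $I$.

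To produce the uniform bound I would first compare the optimal cost with the admissible competitor $u \equiv 0 \in BV(I)^m$, whose BV semi-norm vanishes and whose associated state is $Q(y_0,y_1)$. Optimality then yields
\begin{align*}
\tfrac12 \|y_{\overline{u}_\alpha} - y_d\|_{L^2(\Omega_T)}^2 \leq J(y_{\overline{u}_\alpha},\overline{u}_\alpha) \leq \tfrac12 \|Q(y_0,y_1) - y_d\|_{L^2(\Omega_T)}^2,
\end{align*}
so $\|y_{\overline{u}_\alpha} - y_d\|_{L^2(\Omega_T)} \leq K := \|Q(y_0,y_1) - y_d\|_{L^2(\Omega_T)}$, with $K$ independent of $\alpha$.

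Next, using the formula \eqref{p_1_i_function} for $p_{1,j}$, Cauchy--Schwarz, the boundedness of $L^*:L^2(\Omega_T)\to L^2(\Omega_T)$ (from the energy estimate in Theorem~\ref{RegTheoremWave}), and the time-independence of $g_j$, I would estimate pointwise
\begin{align*}
|p_{1,j}(s)| \leq \|L^*(y_{\overline{u}_\alpha}-y_d)\|_{L^2(\Omega_T)}\,\|g_j\|_{L^2(\Omega_T)} \leq \|L\|\,K\,\sqrt{T}\,\|g_j\|_{L^2(\Omega)} =: M_j
\end{align*}
for every $s \in [0,T]$, with $M_j$ depending only on the data $(y_0,y_1,y_d,g_j,T,\Omega)$. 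For $\alpha_j > M_j$, continuity of $p_{1,j}$ on $[0,T]$ forces $\{p_{1,j} = \pm \alpha_j\} = \emptyset$, and Lemma~\ref{support_measure_cor}(c) then delivers $\overrightarrow{v}_j^\pm = 0$, hence $\overrightarrow{v}_j = 0$.

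The only subtlety is the choice of a competitor whose BV-penalty vanishes, so that the resulting $L^2$-residual bound is genuinely uniform in $\alpha_j$; $u \equiv 0$ is the obvious pick. Everything else reduces to a routine Cauchy--Schwarz estimate and the support characterisation of the optimality system, and no further delicacy is expected.
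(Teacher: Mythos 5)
Your argument is correct and follows essentially the same route as the paper: compare the optimal cost with the zero control to get a residual bound uniform in $\alpha$, deduce a pointwise bound $|p_{1,j}|\leq M_j$ with $M_j$ independent of $\alpha_j$, and invoke Lemma \ref{support_measure_cor} c) to conclude $D_t\overline{u}_{\alpha,j}=0$, hence constancy in time. The only cosmetic difference is that you obtain the bound on $p_{1,j}$ directly from Cauchy--Schwarz and the $L^2(\Omega_T)$-boundedness of $L^*$, whereas the paper routes the same estimate through an $L^\infty(I;H)$ bound on the adjoint state via the energy estimate (\ref{EnergyEsti}); both give an admissible $M_j$.
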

\begin{proof}
	 Let $y^0$, $y_\alpha$ be the solutions of the state equation associated to the controls $u=0$ respectively $\overline{u}_\alpha$. Furthermore, let us define $\overline{p}_{\alpha}:=L(\overline{y}_{\alpha}-y_d)$. From the optimality of $\overline{u}_\alpha$ we get
	\begin{align*}
	\begin{matrix}
	\frac{1}{2} \|\overline{y}_\alpha - y_d\|_{L^2(\Omega_T)}^2 \leq J(\overline{u}_\alpha)\leq J(0) = \frac{1}{2}\| y^0 - y_d \|_{L^2(\Omega_T)}^2.
	\end{matrix}
	\end{align*}
	This implies that
$
	\|\overline{y}_\alpha - y_d\|_{L^2(\Omega_T)} \leq \|y^0 - y_d\|_{L^2(\Omega_T)}.
$
	From the adjoint state equation we obtain
	\begin{align*}
	\begin{matrix}
	\|\overline{p}_\alpha\|_{L^\infty(I;H)}
	%\leq c_1\|\overline{p}_\alpha\|_{L^\infty(I;V)}
	= c_1 \|\overline{p}_\alpha\|_{C(\overline{I};V)} \leq c_1c_2 \|\overline{y}_\alpha -y_d\|_{L^1(I;H)}\\ \\
	\leq c_1c_2c_3 \|\overline{y}_\alpha -y_d\|_{L^2(\Omega_T)} \leq c_1c_2c_3 \|y^0 -y_d\|_{L^2(\Omega_T)}.
	\end{matrix}
	\end{align*}
	The constant $c_1$ is defined with respect to the embedding $L^\infty(I;V) \hookrightarrow L^\infty(I;H)$, $c_2$ is depending on the embedding constant in (\ref{EnergyEsti}), and $c_3$ is the embedding constant of $L^2(\Omega_T)=L^2(I;H)\hookrightarrow L^1(I;H)$. From the adjoint $p_1$, and the above estimation we get for all $t\in [0,T]$
	\begin{align*}
	\begin{matrix}
	|p_{1,j}(t)| \leq T \| \overline{p}_\alpha \|_{L^\infty(I;H)} \|g_j\|_{L^2(w_j)}\leq Tc_1c_2c_3 \|g_j\|_{L^2(w_j)}\|y^0 -y_d\|_{L^2(\Omega_T)} =: M_j
	\end{matrix}
	\end{align*}
	 where the first inequality follows from
	\begin{align*}
	\begin{matrix}
	|p_{1,j}(t)|  \leq \int\limits_t^T \|\overline{p}_\alpha \|_{L^2(w_j)} \|g_j\|_{L^2(w_j)} ds
	\leq T \| \overline{p}_\alpha \|_{L^\infty(I;H)} \|g_j\|_{L^2(w_j)}.
	\end{matrix}
	\end{align*}
	The support relation in Lemma \ref{support_measure_cor} now implies that $D_t\overline{u}_{\alpha,j}\equiv 0$ if $\alpha_j > M_j$.
\end{proof}
\begin{corollary}
Let $\overline{u}\in BV(I)^m$ be the optimal control of $(P)$. Assume for some $i\in\lbrace 1,\cdots,m\rbrace$ that the measures $D_t\overline{u}_i^+$ and $D_t\overline{u}_i^-$ are not trivial. Then we have
\begin{align*}
\begin{matrix}
	\dist(\supp(D_t\overline{u}_i^+),\supp(D_t\overline{u}_i^-)):=\min\limits_{x^\pm\in \supp(D_t\overline{u}_i^\pm)}\vert x^+-x^- \vert>0.
\end{matrix}
\end{align*}
\end{corollary}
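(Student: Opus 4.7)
The plan is to exploit Lemma \ref{support_measure_cor}(c), which localizes the positive and negative parts of the optimal measure on disjoint level sets of $p_{1,i}$, together with the continuity of $p_{1,i}$ supplied by Theorem \ref{FirstOrder}. Via the isomorphism in Lemma \ref{isoBV} and the equivalence of $(P)$ with $(\tilde{P})$, the pair $(D_t\overline{u},\overline{u}(0))$ is the optimal control of $(\tilde{P})$, so that $D_t\overline{u}_i^\pm$ coincides with $\overrightarrow{v}_i^\pm$. The first step, then, is to simply invoke Lemma \ref{support_measure_cor}(c) to obtain the inclusions
\[
\supp(D_t\overline{u}_i^+) \subseteq \{t \in I : p_{1,i}(t) = -\alpha_i\}, \qquad \supp(D_t\overline{u}_i^-) \subseteq \{t \in I : p_{1,i}(t) = +\alpha_i\}.
\]

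The second step is to upgrade disjointness of the target values into a positive distance between the enveloping level sets. Since Theorem \ref{FirstOrder} gives $p_{1,i}\in C^2([0,T])$, the level sets
\[
K^\mp := \{t \in [0,T] : p_{1,i}(t) = \mp\alpha_i\}
\]
are closed subsets of the compact interval $[0,T]$, hence compact. Because $\alpha_i>0$, the two target values are distinct, so $K^+\cap K^- = \emptyset$, and two disjoint compact subsets of $\mathbb{R}$ have strictly positive distance $\delta := \dist(K^+, K^-) > 0$. Combining with the inclusions above and using that $\supp(D_t\overline{u}_i^\pm)\subseteq K^\mp$ yields $\dist(\supp(D_t\overline{u}_i^+), \supp(D_t\overline{u}_i^-)) \geq \delta > 0$.

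The non-triviality hypothesis is used only to guarantee that both supports are non-empty, so that the distance is meaningful; compactness of the closures of the supports in $[0,T]$ then ensures the infimum is actually attained and the statement with $\min$ is justified. I do not expect any substantial obstacle: the proof is essentially a two-line consequence of the support characterization in Lemma \ref{support_measure_cor}(c) and continuity of $p_{1,i}$. The only small delicacy is that the supports are defined as subsets of the open interval $I$, while the natural compactness argument lives on $[0,T]$, which is handled by passing to closures in $[0,T]$ and noting that $\dist(A,B)=\dist(\overline{A},\overline{B})$ in any metric space.
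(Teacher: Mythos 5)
Your proposal is correct and rests on exactly the same ingredients as the paper's proof: the support inclusions from Lemma \ref{support_measure_cor}(c), the continuity of $p_{1,i}$ on $[0,T]$, and $\alpha_i>0$ forcing the two level sets $\{p_{1,i}=-\alpha_i\}$ and $\{p_{1,i}=+\alpha_i\}$ apart. The only difference is presentational: you argue directly that two disjoint compact level sets in $[0,T]$ have positive distance, whereas the paper runs a sequential contradiction argument (extracting sequences in the two supports converging to a common point and deducing $-\alpha=\alpha$); both are valid, and your handling of the open-interval/closure technicality is fine.
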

\begin{proof}
W.l.o.g. let us consider $m=1$. Assume that $\dist(\supp(D_t\overline{u}^+),\supp(D_t\overline{u}^-))=0$. Then there exists a sequence $(t_n)_n\in \supp(D_t\overline{u}^+) \subset I$ such that $p_1(t_n)=-\alpha$ and $\dist(\lbrace t_n \rbrace,\supp(D_t\overline{u}^-))\rightarrow 0$. Hence, there exists a sub-sequence $(t_{t_k})_k$ which converges to some $\tilde{t}$ with $dist(\lbrace \tilde{t}\rbrace,\supp(D_t\overline{u}^-))=0$. Furthermore, there exists a sequence $(\tau_n)_n\in \supp(D_t\overline{u}^-)\subset I$ such that $p_1(\tau_n)=\alpha$ and $\tau_n\rightarrow  \tilde{t}$. By the continuity of $p_1$ we have $-\alpha=\lim\lim\limits_{k\rightarrow\infty}p(t_{n_k})=\lim\limits_{n\rightarrow \infty}p(\tau_n)=\alpha$ which is a contradiction to $\alpha>0$.
\end{proof}

	\begin{remark}\label{supp_opti_control_2}
	If the set of points in which $p_{1,i}(t) \in \lbrace \pm \alpha_i \rbrace$, is finite, we have by Lemma \ref{support_measure_cor} c) that $D_t\overline{u}_i$ is a combination of Dirac measures centered at those points (not necessarily in all of these points). In particular, we obtain that the optimal control $\overline{u}_j$ of $(P)$ is piecewise constant in $[0,T]$ with jumps in $\supp(D_t\overline{u}_i)$. This remark can also be found in \cite[Remark 3.5]{[KCK]}. Later we will construct an analytically exactly solvable example for our problem $(P)$, which allows us tho show that the derivatives of the optimal controls can either be of Cantor or Dirac kind or alternatively absolutely continuous with respect to the Lebesgue measure. In particular, the derivatives of the optimal controls need not to be sparse. For further information about these characterizations of measures, see for example \cite{[AmFuPa]} on page 184.
	\end{remark}

 \section{Regularization}
 \label{regularizeSEct}
 For numerical realization we aim at applying a semi-smooth Newton method. For this purpose we regularize problem $(P)$. We then analyze the asymptotic behavior of the optimal controls of the regularized problem as well as the first-order optimality condition of the regularized problem. Finally, we will present convergence results for the semi-smooth Newton algorithm.

 In the following, let us consider the regularized optimal control problem:
   \begin{align*}
  (P_\gamma^1)\left\lbrace \begin{matrix} \min\limits_{u\in H^1(I)^m} & \begin{bmatrix}
   \frac{1}{2}\| y_u-y_d \|_{L^2(\Omega_T)}^2 + \sum\limits_{j=1}^m \alpha_j \|\partial_t u_j\|_{L^1(I)}\\ \\ + \frac{\gamma}{2} \sum\limits_{j=1}^m \|\partial_t u_j\|_{L^2(I)}^2+\frac{\kappa(\gamma)}{2} \|u(0)\|_{\mathbb{R}^m}^2
  \end{bmatrix}
  =:J^1_\gamma(y,u)
  \text{ subject to }(\mathcal{W})
  \end{matrix}\right.
  \end{align*}
  with $\gamma >0$, $\kappa(\gamma)=c_\kappa\cdot \tilde{f}(\gamma)$, $c_\kappa\geq 0$, monotonously increasing, $\tilde{f}\in C^1([0,\infty))$, $\tilde{f}(0)=0$, and $\supp(\tilde{f})=[0,\infty)$. Note that for each $u\in H^1(I)$ the value $u(0)$ is well defined, because $H^1(I)$ embeds continuously into $C(\overline{I})$.
 % Problem $(P^1_\gamma)$ is regularizing $(P)$. In fact we have added a further squared $L^2$-norm which enforces $u\in BV(I)^m$ in $(P)$ to be a $H^1(I)^m$ function, else the cost term would be infinity by definition.
 %
 The total variation cost term in $(P)$ can be identified with the cost term $\sum\limits_{j=1}^m \alpha_j \|\partial_t \cdot\|_{L^1(I)}$ for $H^1(I)^m$ functions in $(P^1_\gamma)$ since now $u\in H^1(\Omega)$. The symbol $\partial_t$ represents the weak derivative.
 %Using the usual approach, and some steps from the existence proof of $(P)$, we will see in the following section that the problem $(P^1_\gamma)$ has a solution, and this solution is unique due to the strict convexity of $J^1_\gamma$.

  \subsection{Asymptotic behavior as $\gamma \rightarrow 0^+$}
  In this section we show that the unique solution of $(P)$ can be approximated by the unique solutions of the problems $(P^1_\gamma)$ as $\gamma \rightarrow 0$.

 % At first we show, that for all $\gamma >0$ the problem $(P^1_\gamma)$ has a unique solution in $H^1(I)^m$. Afterwards, we analyse the value function $v(\gamma)$ which is the value of the cost functional in $(P^1_\gamma)$ with respect to the optimal control of $(P^1_\gamma)$. The properties of $v$ will enable us to show the approximability of the optimal control in $(P)$ by the optimal controls of $(P^1_\gamma)$ for $\gamma \rightarrow 0$.\\ \\
  In terms of the reduced costs $J$, problem $(P)$ can be expressed as
  \begin{align*}
  (P)\left\lbrace \begin{matrix}
  & \min\limits_{u\in BV(I)^m} \frac{1}{2}\| \tilde{S}(u)-y_d \|_{L^2(\Omega_T)}^2 + \sum\limits_{j=1}^m \alpha_j \|D_tu_j\|_{M(I)}=:J(u)
  \end{matrix}\right. .
  \end{align*}
  Analogously, we have
 \begin{align*}
 (P_\gamma^1)\left\lbrace \begin{matrix}
 \min\limits_{u\in H^1(I)^m}
 \frac{1}{2}\| \tilde{S}(u)-y_d \|_{L^2(\Omega_T)}^2 + \sum\limits_{j=1}^m \alpha_j \|\partial_t u_j\|_{L^1(I)} + \frac{\gamma}{2} \sum\limits_{j=1}^m \|\partial_t u_j\|_{L^2(I)}^2+ \frac{\kappa(\gamma)}{2} \|u(0)\|_{\mathbb{R}^m}^2=:J^1_{\gamma}(u)
 \end{matrix}\right.
 \end{align*}
 The following result follows with standard techniques.
  \begin{theorem}
  For every $\gamma>0$ problem $(P^1_\gamma)$ has a unique solution $\overline{u}_\gamma\in H^1(I)^m$.
  \end{theorem}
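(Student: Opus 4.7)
The plan is to apply the direct method of the calculus of variations, exploiting that each summand of $J_\gamma^1$ is convex and continuous on $H^1(I)^m$. Since $J_\gamma^1 \ge 0$, the infimum $m_\gamma := \inf J_\gamma^1$ is finite, so pick a minimizing sequence $(u_n)_n \subset H^1(I)^m$. The first task is to derive a uniform $H^1$-bound. The $\gamma$-term gives at once a bound on $\|\partial_t u_{n,j}\|_{L^2(I)}$ for every $j$. To upgrade this to a bound on $\|u_n\|_{H^1(I)^m}$ one needs to control the trace $u_n(0)$. If $c_\kappa > 0$, the penalty $\tfrac{\kappa(\gamma)}{2}\|u(0)\|_{\mathbb{R}^m}^2$ bounds $|u_n(0)|$ directly, and a one-dimensional Poincaré-type estimate $\|u\|_{L^2(I)} \le C(|u(0)| + \|\partial_t u\|_{L^2(I)})$ completes the bound. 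If $c_\kappa = 0$, the tracking term forces $\tilde S(u_n)$ to be bounded in $L^2(\Omega_T)$, and the pairwise disjoint supports of the $g_j$ together with the injectivity of $c \mapsto L\bigl(\sum_j c_j g_j\bigr)$ on $\mathbb{R}^m$ provide the missing control on the constant mode.

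Passing to a subsequence (not relabeled), $u_n \rightharpoonup \bar u_\gamma$ weakly in $H^1(I)^m$, and by the compact embedding $H^1(I)\hookrightarrow C(\bar I)$ the convergence is uniform on $\bar I$ and strong in $L^2(I)^m$. Weak lower semicontinuity of $J_\gamma^1$ now follows term-by-term: the tracking term $\tfrac12\|\tilde S(\cdot)-y_d\|_{L^2(\Omega_T)}^2$ is convex and continuous and hence weakly l.s.c.; $\sum_j\alpha_j\|\partial_t u_j\|_{L^1(I)}$ and $\tfrac{\gamma}{2}\sum_j\|\partial_t u_j\|_{L^2(I)}^2$ are convex and continuous in $\partial_t u$, which converges weakly in $L^2(I)^m$; and the $\kappa$-term passes to the limit by the uniform convergence of $u_n(0)$. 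Therefore $J_\gamma^1(\bar u_\gamma)\le \liminf_n J_\gamma^1(u_n) = m_\gamma$, and $\bar u_\gamma$ is a minimizer.

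For uniqueness I would exploit strict convexity. The term $\tfrac{\gamma}{2}\sum_j\|\partial_t u_j\|_{L^2(I)}^2$ is strictly convex in $\partial_t u$, so two minimizers $\bar u_\gamma^{(1)}, \bar u_\gamma^{(2)}$ must share the same derivative; their difference is then a constant $c \in \mathbb{R}^m$. If $c_\kappa > 0$ the trace penalty already forces $c = 0$; if $c_\kappa = 0$, the tracking-term strict convexity combined with the injectivity argument used above for the map $c \mapsto L(\sum_j c_j g_j)$ excludes $c \neq 0$. The principal difficulty, and thus the place to be careful, is the degenerate case $c_\kappa = 0$: there both the a priori $H^1$-bound and uniqueness have to be extracted from the state cost, via the disjointness of the supports $w_j$ and the energy estimate of Theorem \ref{energy}.
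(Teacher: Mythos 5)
The paper gives no written proof of this theorem (it only remarks that the result ``follows with standard techniques''), and your direct-method argument is a correct rendition of exactly that: coercivity of the derivative from the $\gamma$-term, control of the constant mode either via the $\kappa$-penalty or, when $c_\kappa=0$, via boundedness of the tracking term combined with the injectivity of $L$ and the pairwise disjoint supports of the $g_j$, term-by-term weak lower semicontinuity using the compact embedding $H^1(I)\hookrightarrow C(\overline{I})$, and uniqueness from strict convexity in $(\partial_t u,\tilde S(u))$ together with the same injectivity argument for the residual constant. This is moreover the very device the paper itself uses later to bound the means $a_{\gamma_n}$ in the proof of weak* convergence of $\overline{u}_\gamma$, so your proof matches the approach the paper intends.
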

 Let us denote the unique optimal controls of $(P)$ and $(P^1_\gamma)$ by $\overline{u}$ and $\overline{u}_\gamma$.
 To argue the BV-weak* and strict convergence of $\overline{u}_\gamma$ to $\overline{u}$ we use concepts from \cite{[Piep]} and \cite{[KCK]}.
 % In the following we prove the convergence of $(P^1_\gamma)$ solutions to the solution of $(P)$ as $\gamma \rightarrow 0^+$. At first we will follow the concepts of \cite{[Piep]} in Lemma \ref{lemma_valuefct} and Theorem \ref{convergence_rate_reg_prob} below. Later on we use some techniques of \cite{[KCK]} which will give us the $BV$ weak* convergence of optimal controls of $(P_\gamma^1)$ to the optimal control of $(P)$, combined with the Lemma and Theorem of \cite{[Piep]}. Furthermore, we will show that the optimal controls of $(P_\gamma^1)$  converge strictly in BV to the optimal control of $(P)$.
  \begin{definition}
  The value function is defined as
  \begin{align*}
   \begin{matrix}
   \mathfrak{V}:[0,\infty)& \rightarrow & \mathbb{R}, &
   \gamma & \mapsto & \mathfrak{V}(\gamma):=J^1_\gamma(\overline{u}_\gamma),
   \text{ where } \overline{u}_\gamma=\argmin\limits_{u\in H^1(I)^m}J^1_\gamma(u).
      \end{matrix}
   \end{align*}
  \end{definition}
  \begin{lemma}\label{lemma_valuefct}
  The value function $\mathfrak{V}$ maps $[0,\infty)$ into $[J(\overline{u}),\infty)$
  and $\mathfrak{V}(0):=J(\overline{u})$.
  It is (locally) Lipschitz-continuous, monotonically increasing, concave, and a.e. differentiable in $(0,\infty)$ with 
  \begin{align*}
  	\begin{matrix}
  	 \mathfrak{V}(\gamma)'
  	=\frac{1}{2}\sum\limits_{i=1}^m \|\partial_t \overline{u}_{\gamma,i}\|_{L^2(I)}^2 & \text{ if } \kappa=0, \text{ or}\\ \\
  	 \mathfrak{V}(\gamma)'=\frac{1}{2}\sum\limits_{i=1}^m \|\partial_t \overline{u}_{\gamma,i}\|_{L^2(I)}^2 + \frac{\kappa'(\gamma)}{2}\|\overline{u}_\gamma(0)\|^2_{\mathbb{R}^m} & \text{ if } \kappa \neq 0.
  	\end{matrix}
  \end{align*}
  \end{lemma}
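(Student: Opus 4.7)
The plan is to establish the properties in the order: lower bound and boundary value $\mathfrak{V}(0)=J(\overline{u})$, monotonicity, concavity, local Lipschitz continuity (which yields a.e.\ differentiability), and finally the derivative formula via an envelope-type argument. The guiding principle throughout is that $\mathfrak{V}$ is a pointwise infimum of functions parametrized by $\gamma$, so each property of those functions transfers to $\mathfrak{V}$ in a controlled way.

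First, since $H^1(I)^m \hookrightarrow BV(I)^m$ with $\|D_tu_j\|_{M(I)} = \|\partial_t u_j\|_{L^1(I)}$, one has $J^1_\gamma(u) \geq J(u) \geq J(\overline{u})$ for every admissible $u$, so $\mathfrak{V}(\gamma)\geq J(\overline{u})$. To show $\lim_{\gamma\to 0^+}\mathfrak{V}(\gamma)=J(\overline{u})$, I would mollify $\overline{u}$ to obtain $u_n\in H^1(I)^m$ converging to $\overline{u}$ strictly in $BV(I)^m$, so $J(u_n)\to J(\overline{u})$, and take a diagonal sequence $\gamma_n\to 0^+$ fast enough that the extra $H^1$-penalty $\tfrac{\gamma_n}{2}\sum_i \|\partial_tu_{n,i}\|_{L^2}^2 + \tfrac{\kappa(\gamma_n)}{2}\|u_n(0)\|^2$ tends to zero; the upper bound $\mathfrak{V}(\gamma_n)\leq J^1_{\gamma_n}(u_n)$ then closes the gap. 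Monotonicity is immediate from the comparison $\mathfrak{V}(\gamma_1)\leq J^1_{\gamma_1}(\overline{u}_{\gamma_2})\leq J^1_{\gamma_2}(\overline{u}_{\gamma_2})=\mathfrak{V}(\gamma_2)$ for $\gamma_1\leq \gamma_2$, using that $\gamma\mapsto\gamma$ and $\kappa$ are nondecreasing and $\|\partial_tu_j\|_{L^2}^2,\|u(0)\|^2\geq 0$. Concavity of $\mathfrak{V}$ follows from $\gamma\mapsto J^1_\gamma(u)$ being concave in $\gamma$ (affine in $\gamma$ and in $\kappa(\gamma)$, with $\kappa$ taken concave) and the fact that infima of concave functions are concave; local Lipschitz-continuity on $(0,\infty)$ is then a classical consequence of concavity on an open interval, and a.e.\ differentiability is standard.

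For the derivative formula, the key observation is the one-sided \emph{sub-optimality estimate}: for any $h$ (of either sign),
\begin{align*}
\mathfrak{V}(\gamma+h) - \mathfrak{V}(\gamma) \leq J^1_{\gamma+h}(\overline{u}_\gamma) - J^1_\gamma(\overline{u}_\gamma) = \tfrac{h}{2}\sum_{i=1}^m\|\partial_t\overline{u}_{\gamma,i}\|_{L^2(I)}^2 + \tfrac{\kappa(\gamma+h)-\kappa(\gamma)}{2}\|\overline{u}_\gamma(0)\|_{\mathbb{R}^m}^2,
\end{align*}
since $\overline{u}_\gamma$ is admissible for $(P^1_{\gamma+h})$ but not optimal. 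Dividing by $h>0$ and letting $h\to 0^+$ yields $\mathfrak{V}'(\gamma)\leq \tfrac{1}{2}\sum_i\|\partial_t\overline{u}_{\gamma,i}\|_{L^2}^2+\tfrac{\kappa'(\gamma)}{2}\|\overline{u}_\gamma(0)\|^2$ at any point of differentiability; dividing by $h<0$ and letting $h\to 0^-$ reverses the inequality and yields the matching lower bound. The two halves combine to give the stated formula (with the first case $\kappa\equiv 0$ absorbed trivially). A pleasant feature of this route is that it requires \emph{no} continuity of the map $\gamma\mapsto\overline{u}_\gamma$, since only the single minimizer $\overline{u}_\gamma$ at the point of differentiability enters both bounds.

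The main obstacle I expect is the continuity at zero $\mathfrak{V}(0)=J(\overline{u})$, because one must construct an $H^1$-recovery sequence $u_n$ for $\overline{u}\in BV(I)^m$ with two simultaneous properties: strict convergence in $BV$ (so the $L^1$-penalty converges to the $M(I)$-penalty), \emph{and} control of the blow-up of $\|\partial_tu_n\|_{L^2}^2$ and of $\|u_n(0)\|^2$ along a carefully chosen $\gamma_n\to 0^+$. Standard mollification of $\overline{u}$ (after extending it by constants) furnishes the strict-$BV$ approximation but $\|\partial_tu_n\|_{L^2}$ may blow up at a rate tied to the mollifier width, so the diagonal argument must balance the mollification scale against $\gamma_n$; similarly one must verify that $u_n(0)$ remains bounded independently of the mollifier scale, which is where the trace inequality and the explicit choice of extension of $\overline{u}$ near $t=0$ come in.
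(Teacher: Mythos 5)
Your route is the same one the paper intends: its proof is a one-line citation of \cite[Proposition 2.26]{[Piep]}, i.e.\ precisely the value-function/envelope argument you give (monotonicity and concavity of a pointwise infimum of the maps $\gamma\mapsto J^1_\gamma(u)$, local Lipschitz continuity from concavity, and the derivative identity from the two-sided suboptimality estimate at the fixed minimizer $\overline{u}_\gamma$). Your lower bound, monotonicity, and derivative arguments are correct as written; in particular the two-sided estimate needs nothing beyond $\kappa\in C^1$ and existence of minimizers of $(P^1_{\gamma+h})$ for small $|h|$, and it does not require any continuity of $\gamma\mapsto\overline{u}_\gamma$, exactly as you observe.

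The one genuine weak point is the concavity step: you prove it only ``with $\kappa$ taken concave'', which is an assumption the paper does not make. The standing hypotheses are only $\kappa=c_\kappa\tilde f$ with $\tilde f\in C^1([0,\infty))$, $\tilde f(0)=0$, monotonically increasing, and the numerical section even uses $\kappa(\gamma)=\gamma^4$, which is convex; for such $\kappa$ the maps $\gamma\mapsto J^1_\gamma(u)$ are not concave and the ``infimum of concave functions'' argument does not apply (the citation to \cite{[Piep]} hides the same issue, since there the cost is affine in the regularization parameter). If you want a proof under the stated hypotheses you should either make concavity of $\kappa$ explicit as an additional assumption, or observe that everything except concavity survives without it: monotonicity already gives a.e.\ differentiability (Lebesgue), and local Lipschitz continuity on $[a,b]\subset(0,\infty)$ follows directly from your suboptimality estimate combined with the uniform bounds $\tfrac a2\sum_i\|\partial_t\overline{u}_{\gamma,i}\|_{L^2(I)}^2\le\mathfrak{V}(b)$ and, for $\kappa\neq0$, $\tfrac{\kappa(a)}2\|\overline{u}_\gamma(0)\|_{\mathbb{R}^m}^2\le\mathfrak{V}(b)$ for all $\gamma\in[a,b]$, together with $\kappa\in C^1$. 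Finally, the effort you spend on $\mathfrak{V}(0)$ is not needed for this lemma: there $\mathfrak{V}(0):=J(\overline{u})$ is a definition, and continuity of $\mathfrak{V}$ at $0$ is the content of Theorem \ref{convergence_rate_reg_prob}, which the paper proves with exactly the density/recovery-sequence argument you sketch (using the metric $d_{BV}$ rather than mollification of $\overline{u}$ itself, which sidesteps your concern about balancing the mollifier width against $\gamma_n$).
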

  \begin{proof}
   Utilizing the fact, that $\kappa\in C^1([0,\infty))$ and $\kappa(0)=0$, the proof can be carried out along the line of \cite[Proposition 2.26]{[Piep]}.
  \end{proof}
The following theorem can be found in the context of measure valued controls in \cite[Proposition 2.27, Corollary 2.29]{[Piep]}, and for $BV-$controls in \cite[Section 6]{Hafemeyer}.
  \begin{theorem}\label{convergence_rate_reg_prob}
  The value function $\mathfrak{V}$ is continuous in $0$, and we have
 $
 0\leq J^1_\gamma(\overline{u}_\gamma)-J(\overline{u})\leq \gamma\|\mathfrak{V}'\|_{L^\infty(0,c_{loc})}
 $ for every $c_{loc}>0$, and $\gamma \in (0,c_{loc})$.
  \end{theorem}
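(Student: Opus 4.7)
The plan is to handle the two assertions separately: first the continuity of $\mathfrak{V}$ at $0$, then the quantitative upper bound via the fundamental theorem of calculus applied to the (locally Lipschitz) value function.

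For the lower bound $\mathfrak{V}(\gamma)\geq \mathfrak{V}(0)$, I would simply observe that for every $u\in H^1(I)^m\subset BV(I)^m$ the identity $\|\partial_t u_j\|_{L^1(I)}=\|D_t u_j\|_{M(I)}$ holds, and the two extra terms $\tfrac{\gamma}{2}\sum_j\|\partial_t u_j\|_{L^2(I)}^2$ and $\tfrac{\kappa(\gamma)}{2}\|u(0)\|_{\mathbb{R}^m}^2$ are nonnegative, so $J^1_\gamma(u)\ge J(u)\ge J(\overline{u})$. Taking the infimum over $u$ gives $\mathfrak{V}(\gamma)\ge J(\overline{u})=\mathfrak{V}(0)$, which is the left inequality.

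For continuity at $0$, the monotonicity of $\mathfrak{V}$ from Lemma \ref{lemma_valuefct} yields $\liminf_{\gamma\to 0^+}\mathfrak{V}(\gamma)\ge \mathfrak{V}(0)$ for free, so only the matching $\limsup$ bound is needed. I would fix $\varepsilon>0$ and construct a smooth approximation $u^{\varepsilon}\in H^1(I)^m$ of $\overline{u}$ by a standard mollification argument, after extending $\overline{u}$ from $I$ to $\mathbb{R}$ (e.g.\ by constants using the boundary traces of a $BV$-function). Such a mollification converges to $\overline{u}$ strictly in $BV(I)^m$ (see \cite{[AmFuPa]}), hence in $L^2(I)^m$. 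Continuity of $\tilde S$ from $L^2(I)^m$ to $L^2(\Omega_T)$ together with the strict convergence $\|\partial_t u^{\varepsilon}_j\|_{L^1(I)}=\|D_tu^{\varepsilon}_j\|_{M(I)}\to\|D_t\overline{u}_j\|_{M(I)}$ then gives $J(u^{\varepsilon})\le J(\overline{u})+\tfrac{\varepsilon}{2}$ for a suitable mollification. Fixing this $u^{\varepsilon}$ and letting $\gamma\to 0^+$, the remaining terms $\gamma\|\partial_t u^{\varepsilon}_j\|_{L^2(I)}^2$ and $\kappa(\gamma)\|u^{\varepsilon}(0)\|_{\mathbb{R}^m}^2$ tend to $0$ by $\kappa(\gamma)\to 0$; hence for $\gamma$ small enough $\mathfrak{V}(\gamma)\le J^1_\gamma(u^{\varepsilon})\le J(\overline{u})+\varepsilon$. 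Since $\varepsilon$ is arbitrary, $\limsup_{\gamma\to 0^+}\mathfrak{V}(\gamma)\le J(\overline{u})=\mathfrak{V}(0)$, and continuity at $0$ follows.

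For the quantitative upper bound, I would invoke the local Lipschitz continuity of $\mathfrak{V}$ on $(0,\infty)$ granted by Lemma \ref{lemma_valuefct} together with the continuity at $0$ established above to conclude that $\mathfrak{V}$ is absolutely continuous on $[0,c_{loc}]$ for every $c_{loc}>0$. By the fundamental theorem of calculus,
\begin{equation*}
\mathfrak{V}(\gamma)-\mathfrak{V}(0)=\int_0^\gamma \mathfrak{V}'(s)\,\mathrm{d}s \le \gamma\,\|\mathfrak{V}'\|_{L^\infty(0,c_{loc})}
\end{equation*}
for every $\gamma\in(0,c_{loc})$, which is exactly the right-hand inequality (and which is trivially satisfied if the $L^\infty$ norm is infinite).

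The only genuinely technical ingredient is the strict-$BV$ approximation step, since all other pieces (monotonicity, lower bound, FTC) follow directly from Lemma \ref{lemma_valuefct} and linearity. The subtlety in that step is that one must mollify in a way compatible with the boundary behaviour of $\overline{u}$ at $0$ and $T$ to guarantee that the total variation converges (and is not increased by spurious boundary jumps of the extension); this is precisely what the standard one-dimensional $BV$-mollification theory ensures.
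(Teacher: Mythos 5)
Your proposal is correct and follows essentially the same route as the paper: approximate $\overline{u}$ strictly in $BV(I)^m$ by smooth ($H^1$) functions (the paper invokes density of $C^\infty(I)^m$ for the metric $d_{BV}$ where you spell out the mollification), squeeze $\mathfrak{V}(\gamma)$ between $\mathfrak{V}(0)$ and $J(\overline{u})+\varepsilon$ plus vanishing $\gamma$- and $\kappa(\gamma)$-terms to get continuity at $0$, and then conclude via $\mathfrak{V}(\gamma)-\mathfrak{V}(0)=\int_0^\gamma\mathfrak{V}'(t)\,dt$ using the Lipschitz/monotonicity/concavity properties of Lemma \ref{lemma_valuefct}. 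The only cosmetic difference is that the paper justifies $\mathfrak{V}'\in L^\infty(0,c_{loc})$ through monotonicity and concavity rather than your "trivially satisfied if infinite" remark, but this does not change the argument.
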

%Because we work with different cost functionals than in \cite{[Piep]}, and \cite{Hafemeyer} we prove Theorem \ref{convergence_rate_reg_prob} for reason of completeness below.
  \begin{proof}
Let $\epsilon >0$. The space $C^\infty(I)^m$ is dense in $BV(I)^m $ with respect to the metric $d_{BV}:(\phi_1,\phi_2)\mapsto \|\phi_1-\phi_2\|_{L^2(I)^m}+|\|D_t\phi_1\|_{M(I)^m}-\|D_t\phi_2\|_{M(I)^m}|$ in $BV(I)^m $. Hence, we can find a sequence $(u_n)_n \subset C^\infty(I)^m\subset H^1(I)^m$ such that $d_{BV}(u_n,\overline{u})\xrightarrow{n\rightarrow \infty} 0$ with $\overline{u}$ as the solution of $(P)$. Due to the continuity of $\tilde{S}$, we have that $J$ is continuous with respect to the metric $d_{BV}$. The continuity of $J$ implies then, that there exists $N\in \mathbb{N}$ such that $|J(\overline{u})-J(u_n)|\leq \epsilon$ for all $n\geq N$. Thus we have for all $\gamma>0$:
\begin{align}\label{sixstar}
\begin{matrix}
\mathfrak{V}(0)=J(\overline{u})\leq J(\overline{u}_\gamma)+ \frac{\gamma}{2} \displaystyle\sum\limits_{i=1}^m \|\partial_t\overline{u}_\gamma\|_{L^2(I)^m}^2=J_\gamma(\overline{u}_\gamma)\leq J_\gamma(u_n)\\ \\=J(u_n)+\frac{\gamma}{2} \displaystyle\sum\limits_{i=1}^m \|\partial_tu_n\|_{L^2(I)^m}^2 +\frac{\kappa(\gamma)}{2} \|u_n(0)\|_{\mathbb{R}^m}^2
\leq J(\overline{u}) + \epsilon + \frac{\gamma}{2} \displaystyle\sum\limits_{i=1}^m \|\partial_tu_n\|_{L^2(I)^m}^2+\frac{\kappa(\gamma)}{2} \|u_n(0)\|_{\mathbb{R}^m}^2\\ \\
=\mathfrak{V}(0) + \epsilon + \frac{\gamma}{2} \displaystyle\sum\limits_{i=1}^m \|\partial_tu_n\|_{L^2(I)^m}^2 +\frac{\kappa(\gamma)}{2} \|u_n(0)\|_{\mathbb{R}^m}^2.
\end{matrix}
\end{align}
Because $\epsilon$ is arbitrary and $\displaystyle\sum\limits_{i=1}^m \|\partial_tu_n\|_{L^2(I)^m}^2$, and $\|u_n(0)\|_{\mathbb{R}^m}^2$ are bounded, this implies that
$
\mathfrak{V}(0)$\\
$=\liminf\limits_{\gamma \rightarrow 0}\mathfrak{V}(\gamma)=\limsup\limits_{\gamma \rightarrow 0}\mathfrak{V}(\gamma).
$
%Finally this gives us the desired existence of the limes of $v(\gamma)$ for $\gamma  \rightarrow 0$ with value $v(0)$.
%   Consider the following two inequalities
%$
%   J(\overrightarrow{u})\leq J(u_\gamma)\leq v(\gamma)$  $\text{and}$ $
%   v(\gamma)\leq J_\gamma^1(\overrightarrow{u}).
%$
%   These inequalities imply
%   $
%   J(\overrightarrow{u})\leq \liminf\limits_{\gamma\rightarrow 0} v(\gamma)
%   $ and $
%   \limsup\limits_{\gamma \rightarrow 0} v(\gamma)\leq \limsup\limits_{\gamma \rightarrow 0} J^1_\gamma(\overrightarrow{u})=J(\overrightarrow{u})$.
%   Hence we have $\lim\limits_{\gamma \rightarrow 0} v(\gamma)=J(\overrightarrow{u})$.
Using that $
   \mathfrak{V}(\gamma)=\int\limits_0^\gamma \mathfrak{V}'(t)dt + \mathfrak{V}(0)$ holds, we have for all $c_{loc}>0$, and $\gamma \leq c_{loc}$:
   \begin{align*}
   \begin{matrix}
   0 \leq J^1_\gamma(\overline{u}_\gamma)-J(\overline{u})=\mathfrak{V}(\gamma)-\mathfrak{V}(0) =\int\limits_0^\gamma \mathfrak{V}'(t)dt \leq \gamma\|\mathfrak{V}'\|_{L^\infty(0,c_{loc})}
   \end{matrix}
   \end{align*}
   where we used that $\mathfrak{V}$ is (locally) Lipschitz-continuous, monotonously increasing (which implies that $\mathfrak{V}'\geq 0$ a.e.), concave (which implies an a.e. decreasing derivative), and thus $\mathfrak{V}'\in L^\infty(0,c_{loc})$.

  \end{proof}

  \begin{theorem}\label{weak*conv_reg_prob}
  The unique optimal controls $\overline{u}_\gamma$ of $(P^1_\gamma)$ converge weakly* in $BV(I)^m$ to the optimal control $\overline{u}$ of $(P)$.
  \end{theorem}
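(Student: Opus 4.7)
The plan is to prove weak* convergence by a standard compactness-plus-identification argument: first establish a uniform bound $\|\overline{u}_\gamma\|_{BV(I)^m} \le C$, then extract a weakly* convergent sub-sequence, and finally identify its limit as $\overline{u}$ via lower semi-continuity of the total variation, continuity of $\tilde S$ on $L^2(I)^m$, and the uniqueness of the solution of $(P)$. The driving input is Theorem~\ref{convergence_rate_reg_prob}, which provides the uniform estimate $J^1_\gamma(\overline{u}_\gamma) \le J(\overline{u}) + O(\gamma)$ as $\gamma \to 0^+$. The BV-Poincaré inequality and the disjoint-support hypothesis on the $g_j$ will be used in a supporting role.

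The delicate point is the BV-bound. Since every summand of $J^1_\gamma$ is non-negative, the above estimate immediately yields uniform bounds on $\|\tilde S(\overline{u}_\gamma)-y_d\|_{L^2(\Omega_T)}$, on $\sum_j \alpha_j\|\partial_t \overline{u}_{\gamma,j}\|_{L^1(I)}$, and on $\tfrac{\gamma}{2}\|\partial_t \overline{u}_\gamma\|_{L^2(I)^m}^2$. What is missing is a uniform $L^1$-bound on $\overline{u}_\gamma$ itself; the $\kappa$-term only controls $\|\overline{u}_\gamma(0)\|^2$ up to a factor $1/\kappa(\gamma)$, which may blow up as $\gamma \to 0^+$ (and is vacuous if $c_\kappa = 0$). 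To bypass this, I would decompose $\overline{u}_{\gamma,j} = a_{\gamma,j} + v_{\gamma,j}$ with $a_{\gamma,j}:=\tfrac{1}{T}\int_I \overline{u}_{\gamma,j}\,dt$. The BV-Poincaré inequality controls $\|v_{\gamma,j}\|_{L^\sigma(I)}$ uniformly for every $\sigma\in[1,\infty]$ by $\|\partial_t \overline{u}_{\gamma,j}\|_{L^1(I)}$. For the averages, expand
\begin{align*}
L(\overline{u}_\gamma g) \;=\; \sum_{j=1}^m a_{\gamma,j}\,L(g_j) \;+\; L\Bigl(\sum_{j=1}^m v_{\gamma,j}\,g_j\Bigr),
\end{align*}
where $L(g_j)$ denotes the wave solution with the time-constant source $g_j$. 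The last term is bounded in $L^2(\Omega_T)$ because the $v_{\gamma,j}$ are bounded in $L^2(I)$ and $g_j\in L^\infty(\Omega)$; the left-hand side is bounded via the state-bound. It remains to observe that $\{L(g_j)\}_{j=1}^m \subset L^2(\Omega_T)$ are linearly independent: any relation $\sum c_j L(g_j)\equiv 0$ implies, by linearity and the uniqueness part of Theorem~\ref{RegTheoremWave}, that $\sum c_j g_j\equiv 0$ in $\Omega$, and the disjointness of $\supp(g_j)$ forces $c_j=0$. Hence on the finite-dimensional span $|a_\gamma| \le C\,\|\sum_j a_{\gamma,j}L(g_j)\|_{L^2(\Omega_T)}$, which together with the previous estimates gives $|a_\gamma| \le C$ and thus $\|\overline{u}_\gamma\|_{BV(I)^m} \le C$.

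Given this uniform bound, any sub-sequence of $(\overline{u}_\gamma)$ admits a further weakly* convergent sub-sub-sequence with limit $u^*\in BV(I)^m$; by the properties recalled in Section~\ref{WaveBVSection} this sub-sequence also converges to $u^*$ strongly in $L^p(I)^m$ for every $1\le p<\infty$. Continuity of $\tilde S$ on $L^2(I)^m$ then yields $\tilde S(\overline{u}_\gamma) \to \tilde S(u^*)$ in $L^2(\Omega_T)$, while the $L^1$-lower semi-continuity of the total variation functional gives $\|D_t u^*_j\|_{M(I)} \le \liminf_\gamma \|\partial_t \overline{u}_{\gamma,j}\|_{L^1(I)}$. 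Using $J(\overline{u}_\gamma) \le J^1_\gamma(\overline{u}_\gamma)$ and Theorem~\ref{convergence_rate_reg_prob},
\begin{align*}
J(u^*) \;\le\; \liminf_{\gamma\to 0^+} J(\overline{u}_\gamma) \;\le\; \liminf_{\gamma\to 0^+} J^1_\gamma(\overline{u}_\gamma) \;=\; J(\overline{u}),
\end{align*}
so $u^*$ solves $(P)$. Uniqueness (Theorem~\ref{ExistenceSoluti}) forces $u^*=\overline{u}$, and the subsequence principle upgrades this to weak* convergence of the whole family. The main obstacle throughout is the $L^1$-bound on $\overline{u}_\gamma$, for which the linear-independence argument for the wave solutions $L(g_j)$ above is the essential ingredient.
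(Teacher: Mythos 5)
Your proposal is correct and follows essentially the same route as the paper: a uniform $BV$-bound obtained from Theorem \ref{convergence_rate_reg_prob} via the mean/fluctuation decomposition and the BV-Poincar\'e inequality, followed by weak* compactness, lower semicontinuity of the total variation, continuity of $\tilde{S}$ under the induced strong $L^2$ convergence, and uniqueness of the minimizer of $(P)$, finished by the subsequence principle. The only deviation is in bounding the averages $a_\gamma$: you use linear independence of $\lbrace L(g_j)\rbrace_{j=1}^m$ and finite-dimensional norm equivalence, while the paper runs a normalization-and-contradiction argument based on the injectivity of $L$ --- the same underlying fact, with your rendering being, if anything, slightly more direct.
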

 % Note that the weak* convergence in $BV(I)^m$ directly implies the strong convergence in $L^p(I)^m$ for $1\leq p<\infty$.
  \begin{proof}
   Let $(\gamma_n)$ be an arbitrary null sequence in $\mathbb{R}^+$. In the following we show that the solutions $(\overline{u}_{\gamma_n})_{n=1}^\infty$ of the problems $(P^1_{\gamma_n})_{n=1}^\infty$ are bounded in $BV(I)^m$, with a proof which is similar to the one in \cite{[KCK]}:

  Because of the continuity of $\mathfrak{V}(\gamma)$ on $[0,\infty)$ we have that $\left( \mathfrak{V}(\gamma_n)\right)_{n=1}^\infty$ is bounded in $\mathbb{R}$. Thus, we get that $\|\partial_t\overline{u}_{\gamma_n}\|_{L^1(I)^m}=\|D_t\overline{u}_{\gamma_n}\|_{M(I)^m}$ is bounded. Next, we have to prove that $\|\overline{u}_{\gamma_n}\|_{L^1(I)^m}$ is bounded, which is required to show that $(\overline{u}_{\gamma_n})_{n=1}^\infty$ is bounded in $BV(I)^m$. Consider the decomposition $\overline{u}_{{\gamma_{n}}}=a_{\gamma_n} + \hat{u}_{\gamma_n}$ where
  \begin{align}
  \begin{matrix}
  a_{\gamma_n}=\left( a_{\gamma_n,1},...,a_{\gamma_n,m} \right), & \hat{u}_{\gamma_n}=\left( \hat{u}_{\gamma_n,1},..., \hat{u}_{\gamma_n,m} \right)\\ \\
  a_{\gamma_n}=\frac{1}{T}\int\limits_0^T \overline{u}_{\gamma_n}(t) dt \in \mathbb{R}^m, & \hat{u}_{\gamma_n} = \overline{u}_{\gamma_n} - a_{\gamma_n}.
  \end{matrix}
  \end{align}
  At first we argue that $(\overline{u}_{\gamma_n})_n$ is bounded in $BV(I)^m$.
  Note that $\|\tilde{S}(\overline{u}_{\gamma_n})-y_d\|_{L^2(\Omega_T)}^2$ is bounded, because $(v({\gamma_n}))_n$ is bounded. Thus, we get that $\tilde{S}(\overline{u}_{\gamma_n})$ is bounded in $L^2(\Omega_T)$. By (\ref{EnergyEsti}), we have that $\tilde{S}(\hat{u}_{\gamma_n})$ is bounded in $L^2(\Omega_T)$ as well, in fact
  \begin{align}\label{(E)}
  \|\tilde{S}(\hat{u}_{\gamma_n})\|_{L^2(\Omega_T)}\leq c \|\tilde{S}(\hat{u}_{\gamma_n})\|_{C(\overline{I},V)}\leq c_1(\|\hat{u}_{\gamma_n}\|_{L^1(I)^m}+c_2)\leq c_3(\|D_t\overline{u}_{\gamma_n}\|_{M(I)^m}+c_2)
  \end{align}
 % The boundedness of $(\hat{u}_{\gamma_n})_n$ in $L^1(I)^m$ follows from the boundedness of $(u'_{{\gamma_n}})_n$ in $L^1(I)^m$. This can be seen from the following argument
 % \begin{align*}
 % \begin{matrix}
 % \|\hat{u}_{{\gamma_n}}\|_{L^1(I)^m}=\|u_{{\gamma_n}} - a_{{\gamma_n}}\|_{L^1(I)^m}\leq c \|u_{{\gamma_n}}'\|_{M(I)^m}
 % \end{matrix}
 % \end{align*}
  where we used the BV-Poincaré inequality in the last estimate.\\
  Now define $z_n=y_n-\hat{y}_n=L(a_{\gamma_{n}}\overrightarrow{g})$ with $y_n=\tilde{S}(\overline{u}_{{\gamma_n}})$, and $\hat{y}_n=\tilde{S}(\hat{u}_{{\gamma_n}})$. The sequence $z_n$ is bounded in $L^2(\Omega_T)$.
 %is
 % \begin{align*}
 % \left\lbrace\begin{matrix}
 % \partial_{tt} z_n - \bigtriangleup z_n =  \sum\limits_{j=1}^m a_{{\frac{1}{n}},j}g_j  &\text{ in }& I \times \Omega \\
 % z_n = 0  &\text{ on }& I\times \partial \Omega \\
 % (z_n,\partial_t z_n) =   (0 , 0)  &\text{ in }& \lbrace 0 \rbrace \times \Omega
 % \end{matrix}\right.
 % \end{align*}

 		To argue that $(a_{\gamma_n})_n$ is bounded we argue by contradiction, and assume that (for a subsequence, denoted by the same index) $\tilde{p}_n:=\max\limits_{1\leq j\leq m}|a_{\gamma_n,j}| \xrightarrow{n\rightarrow \infty} \infty$. Let us introduce $\xi_n=\frac{1}{\tilde{p}_n}z_n=L(\frac{1}{\tilde{p}_n}a_{\gamma_n}\overrightarrow{g})$.
 		Since
 		\begin{align}
 		\begin{matrix}
 		\| \xi_n \|_{L^2(\Omega_T)}=\| \frac{1}{\tilde{p}_n}z_n \|_{L^2(\Omega_T)} =\frac{1}{\tilde{p}_n} \| z_n \|_{L^2(\Omega_T)} \underbrace{\leq}_{z_n\text{ bdd in }L^2(\Omega_T)} \frac{1}{\tilde{p}_n} c \xrightarrow{n \rightarrow \infty} 0,
 		\end{matrix}\label{contraDi}
 		\end{align}
 		we have that $\xi_n \xrightarrow{L^2(\Omega_T)} 0$.
 		Furthermore, we have
 		\begin{align}
 		\begin{matrix}
 		\|\frac{1}{\tilde{p}_n}a_{\gamma_n}\overrightarrow{g}\|_{L^2(\Omega_T)}=\sqrt{T}\|\frac{1}{\tilde{p}_n}a_{\gamma_n}\overrightarrow{g}\|_{L^2(\Omega)}\underbrace{=}_{\text{Disj. }\supp(g_i)}\sqrt{T}\sum\limits_{i=1}^m \frac{\vert a_{\gamma_n,i}\vert}{\tilde{p}_n} \|g_i\|_{L^2(w_i)},
 		\end{matrix}\label{contraDi23}
 		\end{align}
 		which does not converge to $0$ for $n\rightarrow \infty$ since $\tilde{p}_n \rightarrow \infty$. This is a contradiction to (\ref{contraDi}) by the injectivity of the $L$ operator.
Thus we get that $(a_{\gamma_n})_n$ is a bounded sequence in $\mathbb{R}^m$ and hence $(\overline{u}_{\gamma_n})_{n=1}^\infty$ is bounded in $BV(I)^m$. Here we use that 
%because of (2.1) in \cite{[KCK]}, i.e. 
there exists a constant $C_T$ such that
$
 		\| u \|:= |a| + \|D_tu\|_{M(0,T)^m}\leq \max(1,T) \|u\|_{BV(0,T)^m}\leq C_T \|u\|
$
 		for all $u\in BV(0,T)^m$ and $a=\frac{1}{T}\int\limits_0^T u(t) dt \in \mathbb{R}^m$, use \cite[Theorem 3.44]{[AmFuPa]}.
Considering that bounded sequences in $BV(I)^m$ are weak* compact, we obtain by \cite[Theorem 3.23]{[AmFuPa]} that there exists a sub-sequence $(\overline{u}_{{\gamma_{n_k}}})_k$, which converges weakly* to a function $\tilde{u}\in BV(I)^m$. The weak* convergence implies that $\overline{u}_{{\gamma_{n_k}}}$ converges in $L^2(I)^m$ to $\tilde{u}$, and $D_t\overline{u}_{{\gamma_{n_k}}}$ converges in the weak* topology of $M(I)^m$ to $D_t\tilde{u}$.
 % , i.e. if a sequence in  $(\varphi_n)_n\subset BV(I)^m$ converges strongly in the $L^1(I)^m$ norm to $\varphi$, and $\|\varphi'_n\|_{M(I)^m}$ is bounded, then $\liminf\limits_{n\rightarrow \infty} \|\varphi_n\|_{M(I)^m}\geq \|\varphi\|_{M(I)^m}$ (see p. \pageref{lsc_of_TV}).
  Hence, by the weak* lower semi-continuity of $\|\cdot\|_{M(I)^m}$ we get
  \begin{align}\label{weak_star_conv2}
  \begin{matrix}
  \liminf\limits_{k \rightarrow \infty} \sum\limits_{i=1}^m\alpha_i \| D_t\overline{u}_{{\gamma_{n_k}},i} \|_{M(I)}\geq \sum\limits_{i=1}^m\alpha_i \| D_t\tilde{u}_i \|_{M(I)}.
  \end{matrix}
  \end{align}
  Furthermore, the continuity of $S$ implies that
  \begin{align}\label{weak_star_conv1}
  \lim\limits_{k\rightarrow\infty}\|\tilde{S}(\overline{u}_{{\gamma_{n_k}}})-y_d\|^2_{L^2(\Omega_T)}=\|\tilde{S}(\tilde{u})-y_d\|^2_{L^2(\Omega_T)}.
  \end{align}
  Because $\|\partial_t\overline{u}_{\gamma_{n_k},i}\|_{L^2(I)}$, and $\|\overline{u}_{\gamma_{n_k},i}(0)\|_{\mathbb{R}^m}$ are bounded sequences, we have
%  $v({\gamma_{n_k}}) $ converges to $ v(0)$ and $v$ is continuous in $0$, we get
  \begin{align}\label{weak_star_conv3}
  \begin{matrix}&
  \begin{matrix}
  \lim\limits_{k \rightarrow \infty} \frac{{\gamma_{n_k}}}{2} \sum\limits_{i=1}^m \| \partial_t\overline{u}_{{\gamma_{n_k}},i} \|_{L^2(I)}^2 = 0
     \end{matrix}
  & \text{ and for $\kappa \neq 0$} &  &
  \begin{matrix}
  \lim\limits_{k \rightarrow \infty} \frac{\kappa({\gamma_{n_k}})}{2} \| \overline{u}_{{\gamma_{n_k}}}(0) \|_{\mathbb{R}^m}^2 = 0.
  \end{matrix}&
  \end{matrix}
  \end{align}
  Estimates (\ref{weak_star_conv2}) - (\ref{weak_star_conv3}) and Theorem \ref{convergence_rate_reg_prob} imply that
  \begin{align*}
  \begin{matrix}
  \mathfrak{V}(0)=\lim\limits_{\gamma_{n_k}\rightarrow 0^+}\begin{Bmatrix}
   \frac{1}{2} \|\tilde{S}(\overline{u}_{{\gamma_{n_k}}})-y_d\|^2_{L^2(\Omega_T)} +  \sum\limits_{i=1}^m\alpha_i \| \partial_t\overline{u}_{{\gamma_{n_k}},i} \|_{M(I)}\\ \\
   +  \frac{{\gamma_{n_k}}}{2} \sum\limits_{i=1}^m \| \partial_t\overline{u}_{{\gamma_{n_k}},i} \|_{L^2(I)}^2
   +\frac{\kappa({\gamma_{n_k}})}{2} \| \overline{u}_{{\gamma_{n_k}}}(0) \|_{\mathbb{R}^m}^2
  \end{Bmatrix}\\ \\
  \geq\frac{1}{2} \|\tilde{S}(\tilde{u})-y_d\|^2_{L^2(\Omega_T)} +\sum\limits_{i=1}^m\alpha_i \| D_t\tilde{u}_{i} \|_{M(I)}=J(\tilde{u}).
  \end{matrix}
  \end{align*}
  By uniqueness of the optimal control of $(P)$ we get that $\tilde{u}$ is equal to the optimal control $\overline{u}$ of $(P)$. Thus, the unique solutions $\overline{u}_{{\gamma_{n_k}}}$ of $(P^1_{{\gamma_{n_k}}})$ converge $BV(I)^m$-weak* to the optimal control $\overline{u}$ of $(P)$.
  \end{proof}
  \begin{corollary}\label{strict_conv_BV}
  The unique optimal controls $\overline{u}_\gamma$ of $(P^1_\gamma)$ converge strictly in $BV(I)^m$ to the optimal control $\overline{u}$ of $(P)$.
  \end{corollary}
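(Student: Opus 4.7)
The plan is to invoke Theorem \ref{weak*conv_reg_prob} for the hard part (weak* convergence, which already gives $\overline{u}_{\gamma}\to \overline{u}$ strongly in $L^1(I)^m$) and reduce strict convergence to the verification that $\|D_t\overline{u}_{\gamma,i}\|_{M(I)}\to\|D_t\overline{u}_i\|_{M(I)}$ for each component $i=1,\dots,m$. The upper bound on the total variation will come from the value function, while the matching lower bound is just weak* lower semicontinuity applied componentwise.

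First, fix an arbitrary null sequence $\gamma_n\to 0^+$. From Theorem \ref{weak*conv_reg_prob} we already know $\overline{u}_{\gamma_n}\xrightarrow{*} \overline{u}$ in $BV(I)^m$, hence $\overline{u}_{\gamma_n}\to \overline{u}$ in $L^2(I)^m$ (bounded in BV, weak* convergence forces $L^p$ convergence for $p<\infty$). The continuity of $\tilde{S}:L^2(I)^m\to L^2(\Omega_T)$ then gives $\tilde{S}(\overline{u}_{\gamma_n})\to \tilde{S}(\overline{u})$ in $L^2(\Omega_T)$, so
\[
\tfrac{1}{2}\|\tilde{S}(\overline{u}_{\gamma_n})-y_d\|_{L^2(\Omega_T)}^2\longrightarrow \tfrac{1}{2}\|\tilde{S}(\overline{u})-y_d\|_{L^2(\Omega_T)}^2.
\]

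Next, since $\overline{u}_{\gamma_n}\in H^1(I)^m$ one has $\|\partial_t\overline{u}_{\gamma_n,i}\|_{L^1(I)}=\|D_t\overline{u}_{\gamma_n,i}\|_{M(I)}$. Dropping the non-negative regularizing terms $\tfrac{\gamma_n}{2}\sum_i\|\partial_t\overline{u}_{\gamma_n,i}\|_{L^2(I)}^2$ and $\tfrac{\kappa(\gamma_n)}{2}\|\overline{u}_{\gamma_n}(0)\|_{\mathbb{R}^m}^2$ in the definition of $\mathfrak{V}(\gamma_n)$ yields
\[
\sum_{i=1}^m \alpha_i\,\|D_t\overline{u}_{\gamma_n,i}\|_{M(I)} \;\le\; \mathfrak{V}(\gamma_n) - \tfrac{1}{2}\|\tilde{S}(\overline{u}_{\gamma_n})-y_d\|_{L^2(\Omega_T)}^2.
\]
By Theorem \ref{convergence_rate_reg_prob}, $\mathfrak{V}(\gamma_n)\to \mathfrak{V}(0)=J(\overline{u})$, and using the $L^2$-limit above I obtain
\[
\limsup_{n\to\infty}\sum_{i=1}^m \alpha_i\,\|D_t\overline{u}_{\gamma_n,i}\|_{M(I)} \;\le\; \sum_{i=1}^m\alpha_i\,\|D_t\overline{u}_i\|_{M(I)}.
\]

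For the reverse direction, I apply the weak* lower semicontinuity of the total variation (cited in Section \ref{WaveBVSection}) componentwise: since $D_t\overline{u}_{\gamma_n,i}\xrightarrow{*} D_t\overline{u}_i$ in $M(I)$,
\[
\|D_t\overline{u}_i\|_{M(I)} \;\le\; \liminf_{n\to\infty}\|D_t\overline{u}_{\gamma_n,i}\|_{M(I)}, \qquad i=1,\dots,m.
\]
Combining the weighted sum upper bound with the componentwise lower bounds (and using $\alpha_i>0$), a short elementary argument forces each sequence $\|D_t\overline{u}_{\gamma_n,i}\|_{M(I)}$ to converge to $\|D_t\overline{u}_i\|_{M(I)}$: indeed, if $b_i:=\limsup_n\|D_t\overline{u}_{\gamma_n,i}\|_{M(I)}\ge\|D_t\overline{u}_i\|_{M(I)}=:a_i$, then along a further subsequence along which the $b_i$ are attained, $\sum_i\alpha_i b_i\le \sum_i\alpha_i a_i$ combined with $b_i\ge a_i$ yields $b_i=a_i$ for all $i$. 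Since the null sequence $\gamma_n$ was arbitrary, this yields strict BV convergence in each coordinate, hence in $BV(I)^m$.

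The main potential obstacle is the asymmetry between the weighted sum and the individual components: the value function only controls the weighted total variation, not each $\|D_t\overline{u}_{\gamma,i}\|_{M(I)}$ separately. This is resolved cleanly by exploiting positivity of the weights $\alpha_i$ together with componentwise weak* lower semicontinuity, as sketched above. All other ingredients (continuity of $\tilde{S}$, $\|\partial_t u\|_{L^1}=\|D_tu\|_{M(I)}$ for $H^1$ functions, non-negativity of regularizing terms) are routine.
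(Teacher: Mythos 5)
Your proposal is correct and follows essentially the same route as the paper's (much terser) proof: weak* convergence from Theorem \ref{weak*conv_reg_prob} gives the $L^1$ (indeed $L^2$) convergence and hence convergence of the tracking term, and Theorem \ref{convergence_rate_reg_prob} then forces the total variations to converge; you merely fill in the squeeze between the value-function upper bound and the weak* lower semicontinuity lower bound, plus the elementary weighted-sum argument for componentwise convergence (where, strictly speaking, you should extract a subsequence realizing one $\limsup$ at a time rather than all $b_i$ simultaneously, a trivial repair).
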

  \begin{proof}
  Due to the weak* convergence by Theorem \ref{weak*conv_reg_prob} we get that $\overline{u}_\gamma$ converges in $L^1(I)^m$ to the optimal control $\overline{u}$. Using that $\tilde{S}(\overline{u}_\gamma)\rightarrow \tilde{S}(\overline{u})$ in $L^2(\Omega_T)$, Theorem \ref{convergence_rate_reg_prob} implies that the total variations of $\overline{u}_\gamma$ converge to the total variation of $\overline{u}$.
  \end{proof}

 \subsection{Equivalent Regularized Optimal Control Problem to $(P^1_\gamma)$}\label{Chapter_of_Probelm_P1_gamma}
 In this section we introduce an equivalent problem $(\tilde{P}_\gamma)$ to $(P^1_\gamma)$. The latter will be solved by a semi-smooth Newton method. In the remaining part of the paper we restrict the operator $B$ defined in (\ref{B_red_to_L_2}) to $L^2(I)^m\times \mathbb{R}^m$.
 Its adjoint has the form
 \begin{align*}
 \begin{matrix}
 B^*: L^2(\Omega_T) & \rightarrow & L^2(I)^m\times \mathbb{R}^m,  & &
 \varphi & \mapsto & \left( \begin{matrix} \int\limits_{\Omega}\int\limits_\cdot^T \overrightarrow{g} \varphi \\ \\ \int\limits_{\Omega_T} \overrightarrow{g} \varphi \end{matrix} \right)(s)
 \end{matrix}
 \end{align*}
Analogously we henceforth restrict $S$ to $L^2(I)^m\times\mathbb{R}^m$. The isomorphism in Lemma \ref{isoBV} translates $(P^1_\gamma)$ into the following equivalent form:
 \begin{align*}
 (\tilde{P}_\gamma)\left\lbrace \begin{matrix}
 & \min\limits_{(v,c)\in L^2(I)^m\times\mathbb{R}^m} \begin{Bmatrix}
 \frac{1}{2}\| S(v,c)-y_d \|_{L^2(\Omega_T)}^2 + \sum\limits_{j=1}^m \alpha_j \|v_j\|_{L^1(I)}\\ \\ + \frac{\gamma}{2} \sum\limits_{j=1}^m \|v_j\|_{L^2(I)}^2+\frac{\kappa(\gamma)}{2} \|c\|_{\mathbb{R}^m}^2
 \end{Bmatrix}=:\tilde{J}_\gamma(v,c)
 \end{matrix}\right.
 \end{align*}
 \subsection{Regularization - First-Order Optimality Condition}
 In this section we present the first-order optimality conditions for $(\tilde{P}_\gamma)$. We will use a prox-operator approach to represent implicitly the distributional derivative of the BV-optimal control of $(P)$ with respect to the adjoint. This allows to replace the sub-differential in the first-order optimality conditions of $(\tilde{P}_\gamma)$. Finally we compare the sparsity results of $(\tilde{P}_\gamma)$ and $(\tilde{P})$, and show the convergence of the adjoints of $(\tilde{P}_\gamma)$ to the adjoint of $(\tilde{P})$ for $\gamma \rightarrow \infty$.

 \begin{lemma}\label{maxminFormula1}
 Let $(\overrightarrow{v},\overrightarrow{c})\in L^2(I)^m \times \mathbb{R}^m$ be the optimal control of $(\tilde{P}_\gamma)$. We have the following necessary and sufficient optimality conditions for $(\tilde{P}_\gamma)$:
   \begin{align*}
   \begin{matrix}
   (1) & \overrightarrow{v}(s)=\left(\begin{matrix}\max
   \left(0,-\frac{1}{\gamma}\int\limits_{\Omega}\int\limits_{s}^T
   L^*(S(\overrightarrow{v},\overrightarrow{c})-y_d)g_i dtdx - \frac{\alpha_i}{\gamma}\right)+\\
   +\min\left(0,-\frac{1}{\gamma}\int\limits_{\Omega}\int\limits_{s}^T
   L^*(S(\overrightarrow{v},\overrightarrow{c})-y_d)g_i dtdx + \frac{\alpha_i}{\gamma}\right)\end{matrix} \right)_{i=1}^m\in L^2(I)^m \\ \\
   (2) & -\int_{\Omega_T}L^*(S(\overrightarrow{v},\overrightarrow{c})-y_d)\overrightarrow{g} dtdx-\kappa(\gamma) \overrightarrow{c}=0_{\mathbb{R}^m} \in \mathbb{R}^m
   \end{matrix}
   \end{align*}
 \end{lemma}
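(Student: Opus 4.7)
The plan is to exploit the fact that $(\tilde P_\gamma)$ is a strictly convex, coercive minimization in $L^2(I)^m\times\mathbb{R}^m$, so that Fermat's rule $0\in\partial \tilde J_\gamma(\overrightarrow v,\overrightarrow c)$ is both necessary and sufficient. The cost splits as a Fréchet-differentiable part (the tracking term together with the $\frac{\gamma}{2}\|v\|_{L^2}^2$ and $\frac{\kappa(\gamma)}{2}\|c\|^2$ quadratics) and a convex, continuous, non-smooth part $\sum_j\alpha_j\|v_j\|_{L^1(I)}$. Moreau--Rockafellar therefore gives $\partial \tilde J_\gamma$ as the sum of the derivative of the smooth part and the subdifferential of the $L^1$ term.

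First, I would compute the gradient of the smooth part by the chain rule. Since $S(v,c)=L(B(v,c))+Q(y_0,y_1)$, the derivative of $\tfrac12\|S(v,c)-y_d\|_{L^2(\Omega_T)}^2$ equals $B^*L^*(S(v,c)-y_d)$. Using the explicit formula for $B^*$ already recorded in the excerpt, the $v$-component of this gradient at a point $s\in I$ is $\bigl(\int_\Omega\int_s^T L^*(S(\overrightarrow v,\overrightarrow c)-y_d)\,g_i\,dt\,dx\bigr)_{i=1}^m$, and the $c$-component is $\int_{\Omega_T}L^*(S(\overrightarrow v,\overrightarrow c)-y_d)\,\overrightarrow g\,dt\,dx$. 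Adding the derivatives of the quadratic pieces yields, in the $c$-component, the scalar relation in (2) directly. In the $v$-component the stationarity condition reads
\begin{equation*}
-\Bigl(\int_\Omega\int_\cdot^T L^*(S(\overrightarrow v,\overrightarrow c)-y_d)g_i\,dt\,dx\Bigr)_{i=1}^m\;\in\;\gamma\overrightarrow v+\bigl(\alpha_i\,\partial\|\overrightarrow v_i\|_{L^1(I)}\bigr)_{i=1}^m .
\end{equation*}

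Next, I would localise the $L^1$-subdifferential inclusion pointwise: for $w\in L^2(I)$ one has $\xi\in\partial\|w\|_{L^1(I)}$ iff $\xi\in L^\infty(I)$, $\|\xi\|_{L^\infty}\leq1$, and $\xi(s)=\sgn(w(s))$ for a.e.\ $s$ with $w(s)\neq0$. Writing $\psi_i(s):=-\tfrac1\gamma\int_\Omega\int_s^T L^*(S(\overrightarrow v,\overrightarrow c)-y_d)g_i\,dt\,dx$, the componentwise condition becomes $\overrightarrow v_i(s)=\psi_i(s)-\tfrac{\alpha_i}{\gamma}\xi_i(s)$ with $\xi_i(s)\in\partial|\overrightarrow v_i(s)|$. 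Standard case analysis ($|\psi_i(s)|>\tfrac{\alpha_i}{\gamma}$, $|\psi_i(s)|\leq\tfrac{\alpha_i}{\gamma}$) solves this pointwise and yields the soft-thresholding identity $\overrightarrow v_i(s)=\max(0,\psi_i(s)-\tfrac{\alpha_i}{\gamma})+\min(0,\psi_i(s)+\tfrac{\alpha_i}{\gamma})$, which is exactly formula (1).

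The main technical care is in the subdifferential step: one must confirm that $\partial\bigl(\tfrac12\|S(v,c)-y_d\|^2+\tfrac{\gamma}{2}\|v\|_{L^2}^2+\tfrac{\kappa(\gamma)}{2}\|c\|^2\bigr)+\alpha\,\partial\|v\|_{L^1}$ coincides with $\partial\tilde J_\gamma$; this is guaranteed by Moreau--Rockafellar once one observes that the smooth quadratic part is finite and continuous everywhere. The pointwise reading of the $L^1$-subdifferential then rests on the fact that the integrand $-\psi_i+\tfrac{\alpha_i}{\gamma}\xi_i+\overrightarrow v_i$ must vanish a.e., together with the explicit description of $\partial|\cdot|$ on $\mathbb{R}$; once this is established, the passage to the max/min formula is a purely algebraic verification. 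Uniqueness of $(\overrightarrow v,\overrightarrow c)$ (from strict convexity of $\tilde J_\gamma$) closes the argument.
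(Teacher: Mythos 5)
Your proposal is correct and takes essentially the same route as the paper's proof: both reduce the lemma to the convex first-order inclusion obtained from Fermat's rule and the subdifferential sum rule (smooth gradient $B^*L^*(S(\overrightarrow{v},\overrightarrow{c})-y_d)$ plus $\gamma\overrightarrow{v}$, $\kappa(\gamma)\overrightarrow{c}$, plus $\bigl(\alpha_i\partial\|\overrightarrow{v}_i\|_{L^1(I)}\bigr)_{i=1}^m$), read the $c$-component as (2), and solve the $v$-component pointwise by the standard case analysis of the $L^1$ subdifferential to get the soft-thresholding identity (1). The only presentational difference is that the paper packages this pointwise step as the explicit computation of the prox operator (which it reuses later for the semi-smooth Newton method), whereas you solve the inclusion $\overrightarrow{v}_i(s)=\psi_i(s)-\tfrac{\alpha_i}{\gamma}\xi_i(s)$, $\xi_i(s)\in\partial|\overrightarrow{v}_i(s)|$, directly.
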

\begin{proof}
Since this proof is standard, we have deferred it to the appendix.
\end{proof}
In the appendix it is also shown that  \begin{align}\label{proxerop}
  \begin{matrix}
  \prox^\gamma_{\sum\limits_i\alpha_i \|\cdot\|_{L^1(I)}}\left(-\frac{1}{\gamma}\int\limits_{\Omega}\int\limits_s^T L^*(S(\overrightarrow{v},\overrightarrow{c})-y_d) \overrightarrow{g} dt dx\right)
  \end{matrix}
  \end{align}
is equal to the right hand side of equation (1) in Lemma \ref{maxminFormula1}.

Due to the regularity of the adjoint wave equation, we have that the optimal control $\overrightarrow{v}$ is at least Lipschitz continuous.
% Later we would like to show that the function
%  \begin{align*}
%  \begin{matrix}
%  F_{\gamma}:L^2(I)^m\times \mathbb{R}^m \rightarrow L^2(I)^m\times \mathbb{R}^m\\ \\
%  F_{\gamma}(\overrightarrow{u}):=\left(\begin{matrix}
%  \overrightarrow{v}-\prox^\gamma_{\sum\limits_i\alpha_i \|\cdot\|_{L^1(I)}}(-\frac{1}{\gamma} \psi(s))\\
%  \frac{\kappa(\gamma)}{\gamma}\overrightarrow{c}+\frac{1}{\gamma}\int\limits_{\Omega_T} L^*(S\overrightarrow{u}-y_d)\overrightarrow{g} dtdx
%  \end{matrix}\right)
%  \end{matrix}
%  \end{align*}
%  with $\psi$ and $\psi_i$ is semi-smooth.
% One could ask why we have taken the scalar $\gamma$ inside the prox map and not an arbitrary $w$ as usually done in the Yosida approximation method, see (\ref{proxerop}) in the proof above:  This function will be used for the semi-smooth Newton algorithm. Using a scalar $\omega \neq \gamma$ is giving us a non- semi-smooth map, due to the norm gap, which is not the case for $\omega=\gamma$. For further information see for example \cite{[HPUU]} on page 129.
%
 \begin{proposition}\label{key123}
 Let $(\overrightarrow{v},\overrightarrow{c})\in L^2(I)^m \times \mathbb{R}^m$ be the optimal control of $(\tilde{P}_\gamma)$. Then we have for a.a. $s\in I$ and $i=1,...,m$:
  \begin{align}\label{implicitcontrol}
  \begin{matrix}
  \overrightarrow{v}_i(s)=\left\lbrace\begin{matrix}
  0 & , & |\psi_{\gamma,i}(s)|<\alpha_i\\ \\
  -\frac{1}{\gamma}\psi_{\gamma,i}(s) + \frac{\alpha_i}{\gamma} & , & \psi_{\gamma,i}(s)\geq \alpha_i\\ \\
  -\frac{1}{\gamma}\psi_{\gamma,i}(s) - \frac{\alpha_i}{\gamma} & , & \psi_{\gamma,i}(s)\leq -\alpha_i
  \end{matrix}\right.
  \end{matrix}
  \end{align}
  with $\psi_\gamma(s)=\int\limits_{\Omega}\int\limits_s^T L^*(S(\overrightarrow{v},\overrightarrow{c})-y_d)\overrightarrow{g} dtdx$, and $\psi_{\gamma,i}(s)=\int\limits_{\Omega}\int\limits_s^T L^*(S(\overrightarrow{v},\overrightarrow{c})-y_d)g_i dtdx$.
 \end{proposition}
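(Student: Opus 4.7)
The plan is to derive the proposition directly from the explicit formula for $\overrightarrow{v}$ given in part (1) of Lemma~\ref{maxminFormula1}. That lemma states
\[
\overrightarrow{v}_i(s) = \max\!\left(0,-\tfrac{1}{\gamma}\psi_{\gamma,i}(s)-\tfrac{\alpha_i}{\gamma}\right)
+ \min\!\left(0,-\tfrac{1}{\gamma}\psi_{\gamma,i}(s)+\tfrac{\alpha_i}{\gamma}\right),
\]
where $\psi_{\gamma,i}$ is exactly the integral expression appearing in the proposition. So the task reduces to unfolding this max/min expression into the three pointwise regimes dictated by the relative position of $\psi_{\gamma,i}(s)$ with respect to $\pm\alpha_i$.

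Concretely, I would split the real line into the three intervals $(-\infty,-\alpha_i]$, $(-\alpha_i,\alpha_i)$, $[\alpha_i,\infty)$ and, for a.e.\ $s\in I$, check which term in the sum is active, using that $\alpha_i,\gamma>0$. In the middle regime $|\psi_{\gamma,i}(s)|<\alpha_i$ one has $-\tfrac{1}{\gamma}\psi_{\gamma,i}(s)-\tfrac{\alpha_i}{\gamma}<0$ (so the max vanishes) and $-\tfrac{1}{\gamma}\psi_{\gamma,i}(s)+\tfrac{\alpha_i}{\gamma}>0$ (so the min vanishes), giving $\overrightarrow{v}_i(s)=0$. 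For $\psi_{\gamma,i}(s)\geq\alpha_i$ both arguments of max and min are non-positive, so the max is $0$ while the min equals the argument itself, yielding $\overrightarrow{v}_i(s)=-\tfrac{1}{\gamma}\psi_{\gamma,i}(s)+\tfrac{\alpha_i}{\gamma}$. The case $\psi_{\gamma,i}(s)\leq -\alpha_i$ is symmetric: both arguments are non-negative, the min vanishes, and the max contributes the first branch.

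There is no genuine obstacle here: the statement is just a bookkeeping restatement of Lemma~\ref{maxminFormula1} and equivalently of the proximal operator identity in~(\ref{proxerop}) (indeed (\ref{implicitcontrol}) is the well-known componentwise shrinkage form of $\prox^{\gamma}_{\alpha_i\|\cdot\|_{L^1}}$ applied to $-\tfrac{1}{\gamma}\psi_{\gamma,i}$). The one thing worth saying explicitly at the end is that the pointwise identity holds for a.a.\ $s\in I$ because Lemma~\ref{maxminFormula1} is an identity in $L^2(I)^m$; the a.e.\ qualification in the statement is therefore the correct one, and no further regularity of $\psi_{\gamma,i}$ beyond continuity (already noted right before the proposition) is required to conclude.
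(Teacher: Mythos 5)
Your proposal is correct and is essentially the paper's own route: Proposition~\ref{key123} is obtained by unfolding the $\max/\min$ (prox) representation of $\overrightarrow{v}$ from condition (1) of Lemma~\ref{maxminFormula1} into the three regimes $|\psi_{\gamma,i}(s)|<\alpha_i$, $\psi_{\gamma,i}(s)\geq\alpha_i$, $\psi_{\gamma,i}(s)\leq-\alpha_i$, exactly as the paper does via the pointwise shrinkage formula established in the appendix (with $\mathfrak{p}=-\tfrac{1}{\gamma}\psi_\gamma$). Your case checks are accurate, and the closing remark that the identity holds only a.e.\ because Lemma~\ref{maxminFormula1} is an $L^2$-identity is the right justification for the ``a.a.\ $s\in I$'' qualification.
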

 One can compare the sparsity structure of the optimal controls associated to $(\tilde{P}_\gamma)$ to the sparsity for the optimal control of $(\tilde{P})$. The optimal measures $\overrightarrow{v}_i$ in $(\tilde{P})$, see Lemma \ref{support_measure_cor}, are not supported, where $\vert p_{1,i}(t) \vert <\alpha_i$, while the optimal measures for $(\tilde{P}_\gamma)$ are not supported, where $\vert \psi_{\gamma,i}(t) \vert < \alpha_i $ holds.

We next address the convergence of the adjoints $\psi_\gamma$ of $(\tilde{P}_\gamma)$ to the adjoint $p_1$ of $(\tilde{P})$, which is defined in Theorem \ref{FirstOrder}.
 \begin{proposition}\label{DoesNotWorkWithKappaBigger0}
 For $0<\gamma \rightarrow 0$ we find
 $\psi_\gamma \xrightarrow{H^2(I)^m}p_1$. Furthermore, we have for $\kappa=0$,
 \begin{align}\label{E1Korr}
 	\begin{matrix}
 	\left\langle -\frac{\psi_{\gamma,i}}{\alpha_i},\overrightarrow{v}_{\gamma,i}\right\rangle_{C_0(I),M(I)} \xrightarrow{\gamma \rightarrow 0}
 	\left\langle -\frac{p_{1,i}}{\alpha_i},\overrightarrow{v}_{i}\right\rangle_{C_0(I),M(I)} = \|\overrightarrow{v}_i\|_{M(I)},
 		\end{matrix}
 \end{align}
\begin{align}\label{E2Korr}
	\begin{matrix}
	\text{ and for }\kappa> 0\text{, }\int\limits_I -\frac{\psi_{\gamma,i}}{\alpha_i}d\overrightarrow{v}_{\gamma,i}(s) \xrightarrow{\gamma \rightarrow 0}
	\left\langle -\frac{p_{1,i}}{\alpha_i},\overrightarrow{v}_{i}\right\rangle_{C_0(I),M(I)} = \|\overrightarrow{v}_i\|_{M(I)},
	\end{matrix}
\end{align}
    for $i=1,\cdots,m,$ where $(\overrightarrow{v},\overrightarrow{c})\in M(I)^m\times\mathbb{R}^m$ is the solution to $(\tilde{P})$, and $(\overrightarrow{v}_\gamma,\overrightarrow{c}_\gamma)\in L^2(I)^m\times\mathbb{R}^m$ is the solution to $(\tilde{P}_\gamma)$.
 \end{proposition}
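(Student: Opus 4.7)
The plan is to split the claim into two essentially independent pieces: (i) the norm convergence $\psi_\gamma \to p_1$ in $H^2(I)^m$, which is a pure regularity statement about the (adjoint) wave equation, and (ii) the asymptotic evaluation of the pairings in \eqref{E1Korr}/\eqref{E2Korr}, which reduces to a pointwise computation using the explicit formula \eqref{implicitcontrol} for the regularized optimal control.

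For (i), I would first invoke Corollary \ref{strict_conv_BV} to obtain $\overline{u}_\gamma \to \overline{u}$ strictly in $BV(I)^m$ and hence in $L^1(I)^m$; together with the uniform $L^\infty$ bound supplied by the embedding $BV(I)\hookrightarrow L^\infty(I)$ (one-dimensional domain), interpolation upgrades this to $L^2(I)^m$-convergence. Consequently the forcing $\overline{u}_\gamma \overrightarrow{g}$ converges in $L^2(\Omega_T)$, and by the continuity of $S$ so does $S(\overrightarrow{v}_\gamma,\overrightarrow{c}_\gamma)-y_d$. Applying Theorem \ref{RegTheoremWave} to the adjoint equation (which is of the same hyperbolic type), the adjoint states $p_\gamma:=L^*(S(\overrightarrow{v}_\gamma,\overrightarrow{c}_\gamma)-y_d)$ converge to $p:=L^*(S(\overrightarrow{v},\overrightarrow{c})-y_d)$ in $C([0,T];V)\cap C^1([0,T];H)$. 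Since $\psi_{\gamma,i}(s)=\int_s^T\!\int_\Omega p_\gamma g_i\,dx\,dt$, the first two $s$-derivatives are $-\int_\Omega p_\gamma(s,\cdot)g_i\,dx$ and $-\int_\Omega \partial_t p_\gamma(s,\cdot)g_i\,dx$, both continuous in $s$ and uniformly convergent as $\gamma\to 0$. Thus $\psi_\gamma\to p_1$ in $C^2([0,T])^m\hookrightarrow H^2(I)^m$.

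For (ii), a case-by-case inspection of \eqref{implicitcontrol} produces the pointwise identity
\begin{equation*}
-\psi_{\gamma,i}(s)\,\overrightarrow{v}_{\gamma,i}(s) = \alpha_i\,|\overrightarrow{v}_{\gamma,i}(s)| + \gamma\,|\overrightarrow{v}_{\gamma,i}(s)|^2 \qquad \text{for a.e.\ } s\in I.
\end{equation*}
Indeed, on $\{\overrightarrow{v}_{\gamma,i}=0\}$ this is trivial, while on the complement \eqref{implicitcontrol} gives both $\sign(\psi_{\gamma,i})=-\sign(\overrightarrow{v}_{\gamma,i})$ and $|\psi_{\gamma,i}|=\alpha_i+\gamma|\overrightarrow{v}_{\gamma,i}|$, from which the identity follows by multiplication. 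Integrating over $I$ and dividing by $\alpha_i$ yields
\begin{equation*}
\int_I -\frac{\psi_{\gamma,i}}{\alpha_i}\,\overrightarrow{v}_{\gamma,i}\,ds = \|\overrightarrow{v}_{\gamma,i}\|_{L^1(I)} + \frac{\gamma}{\alpha_i}\|\overrightarrow{v}_{\gamma,i}\|_{L^2(I)}^2,
\end{equation*}
which is the quantity appearing in both \eqref{E1Korr} and \eqref{E2Korr}. Note that when $\kappa=0$ the optimality condition (2) of Lemma \ref{maxminFormula1} gives $\psi_\gamma(0)=0$, so $\psi_\gamma\in C_0(I)^m$ and the left-hand side is literally the $C_0$--$M$ duality pairing; for $\kappa>0$, $\psi_\gamma\notin C_0(I)^m$ in general but the $L^2$--$L^2$ integral is unchanged.

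To pass to the limit I would argue that $\|\overrightarrow{v}_{\gamma,i}\|_{L^1(I)}=\|D_t\overline{u}_{\gamma,i}\|_{M(I)}\to\|D_t\overline{u}_i\|_{M(I)}=\|\overrightarrow{v}_i\|_{M(I)}$ by the very definition of strict $BV$-convergence in Corollary \ref{strict_conv_BV}. For the residual term I would exploit the cost decomposition
\begin{equation*}
\frac{\gamma}{2}\sum_{i=1}^m \|\overrightarrow{v}_{\gamma,i}\|_{L^2(I)}^2 + \frac{\kappa(\gamma)}{2}\|\overrightarrow{c}_\gamma\|_{\mathbb{R}^m}^2 \;=\; \mathfrak{V}(\gamma) - J(\overline{u}_\gamma),
\end{equation*}
combining the continuity of $\mathfrak{V}$ at $0$ (Theorem \ref{convergence_rate_reg_prob}) with the continuity of $J$ along strictly $BV$-converging sequences (which follows from continuity of $\tilde S:L^2\to L^2$ plus the strict-convergence of the total variations) to conclude that the right-hand side tends to $0$; since both summands on the left are non-negative, this forces $\gamma\|\overrightarrow{v}_{\gamma,i}\|_{L^2(I)}^2\to 0$. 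The final equality with $\|\overrightarrow{v}_i\|_{M(I)}$ is then exactly Lemma \ref{support_measure_cor}\,b). The main technical obstacle is the pointwise identity of step (ii): everything else is bookkeeping, but without the explicit $\max/\min$ structure in \eqref{implicitcontrol} one would lose simultaneous control of the sign and magnitude of $\overrightarrow{v}_{\gamma,i}$ relative to $\psi_{\gamma,i}$, and the limit could not be extracted so cleanly.
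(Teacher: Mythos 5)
Your argument is correct, and while part (i) essentially reproduces the paper's proof (the $H^2$ convergence of $\psi_\gamma$ is obtained in both cases from the wave-equation energy estimate applied to the difference of controls, which only needs $L^1(I)^m$ convergence of $\overline{u}_\gamma$ — your detour through $L^\infty$ bounds and interpolation to get $L^2$ convergence is harmless but unnecessary), part (ii) takes a genuinely different route. The paper proves \eqref{E1Korr}/\eqref{E2Korr} as a ``strong times weak*'' duality limit: for $\kappa=0$ it pairs $\psi_{\gamma,i}\to p_{1,i}$ in $C_0(I)$ against $\overrightarrow{v}_{\gamma,i}\rightharpoonup^* \overrightarrow{v}_i$ in $M(I)$, and for $\kappa>0$, where $\psi_{\gamma,i}(0)=-\kappa(\gamma)\overrightarrow{c}_{\gamma,i}\neq 0$ spoils membership in $C_0(I)$, it inserts the cutoff $\varphi(t)=\cos(\tfrac{\pi}{T}t)\mathbf{1}_{[0,T/2)}$ to correct the boundary value and estimates the correction term by $\kappa(\gamma)\vert\overrightarrow{c}_{\gamma,i}\vert\,\|\overrightarrow{v}_{\gamma,i}\|_{M(I)}\to 0$. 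You instead evaluate the pairing exactly from the prox formula \eqref{implicitcontrol}, obtaining $\int_I-\tfrac{\psi_{\gamma,i}}{\alpha_i}\overrightarrow{v}_{\gamma,i}\,ds=\|\overrightarrow{v}_{\gamma,i}\|_{L^1(I)}+\tfrac{\gamma}{\alpha_i}\|\overrightarrow{v}_{\gamma,i}\|^2_{L^2(I)}$ (your case check of the sign/magnitude relations is right), and then identify the limit of the right-hand side with $\|\overrightarrow{v}_i\|_{M(I)}=\langle -\tfrac{p_{1,i}}{\alpha_i},\overrightarrow{v}_i\rangle$ via strict $BV$ convergence, the value-function identity $\tfrac{\gamma}{2}\sum_i\|\overrightarrow{v}_{\gamma,i}\|^2_{L^2}+\tfrac{\kappa(\gamma)}{2}\|\overrightarrow{c}_\gamma\|^2=\mathfrak{V}(\gamma)-J(\overline{u}_\gamma)\to 0$, and Lemma \ref{support_measure_cor} b). This buys a unified treatment of $\kappa=0$ and $\kappa>0$ (no cutoff trick, no need for $\psi_\gamma$ to lie in $C_0(I)$), and it in fact bypasses the $H^2$/uniform convergence of $\psi_\gamma$ entirely for the pairing statement; the price is that it leans harder on the strict-convergence machinery. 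One small point to tighten: Corollary \ref{strict_conv_BV} gives convergence of the (weighted, summed) total variations, so to get the componentwise statement $\|\overrightarrow{v}_{\gamma,i}\|_{L^1(I)}\to\|\overrightarrow{v}_i\|_{M(I)}$ you should add the standard remark that weak* lower semicontinuity of each $\|D_t\cdot\|_{M(I)}$ together with convergence of the weighted sum forces each summand to converge; with that, the proof is complete.
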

 Due to Theorem \ref{weak*conv_reg_prob} we know that $\overrightarrow{v}_\gamma$, the derivative of the optimal control $\overline{u}_\gamma$ of $(P^1_\gamma)$, converges weakly* to $\overrightarrow{v}$ in $M(I)^m$, the distributional derivative of the optimal control $\overline{u}$ of $(P)$. Furthermore, recall that $\left\langle -\frac{p_{1,i}}{\alpha_i},\overrightarrow{v}_{i}\right\rangle_{C_0(I),M(I)} = \|\overrightarrow{v}_i\|_{M(I)}$ holds due to Lemma \ref{support_measure_cor}.

\begin{proof}
  We first show that $\|\psi_{\gamma,i }- p_{1,i}\|_{H^2(I)}\xrightarrow{\gamma \rightarrow 0} 0$, for $i=1,\cdots,m$, holds.
  By regularity results of the wave equation we have that $p_{1,i}$ and $\psi_{\gamma,i}$ are elements of $H^2(I)$.
%  Furthermore:
%  \begin{align*}
%  \begin{matrix}
%  \|\psi_{\gamma,i}-p_{1,i}\|_{L^2(I)}^2=\int\limits_0^T\left(
%  \int\limits_s^T\int\limits_\Omega L^*(L((u_\gamma-\overrightarrow{u})\overrightarrow{g}))g_idxdt
%  \right)^2ds\\ \\
%  \leq \int\limits_0^T\left(
%  \int\limits_s^T\int\limits_\Omega\vert L^*(L((u_\gamma-\overrightarrow{u})\overrightarrow{g}))\vert \vert g_i\vert dxdt
%  \right)^2ds\underbrace{\leq}_{H\"older} \int\limits_0^T\left(
%  \int\limits_s^T \| L^*(L((u_\gamma-\overrightarrow{u})\overrightarrow{g}))\|_{H} \| g_i\|_{H} dt
%  \right)^2ds\\ \\
%  \leq \| g_i\|_{H}^2\int\limits_0^T\underbrace{\left(
%  	\int\limits_0^T  \| L^*(L((u_\gamma-\overrightarrow{u})\overrightarrow{g}))\|_{H}  dt\right)^2}_{=\|L^*(L((u_\gamma-\overrightarrow{u})\overrightarrow{g}))\|_{L^1(I;H)^2}}
%  ds\underbrace{\leq}_{L^2(I;H)\hookrightarrow L^1(I;H)}\tilde{c}\| g_i\|_{H}^2 T \| L^*(L((u_\gamma-\overrightarrow{u})\overrightarrow{g}))\|_{L^2(I;H)}^2\\ \\
%  \leq\tilde{c} \| L^*(L((u_\gamma-\overrightarrow{u})\overrightarrow{g}))\|_{C(\overline{I};H)}^2\underbrace{\leq}_{(\ref{EnergyEsti})}\tilde{c} \| L((u_\gamma-\overrightarrow{u})\overrightarrow{g})\|_{L^1(I;H)}^2\leq
%  \tilde{c} \| L((u_\gamma-\overrightarrow{u})\overrightarrow{g})\|_{C(\overline{I};H)}^2\\ \\
%  \underbrace{\leq}_{(\ref{EnergyEsti})}\tilde{c} \| (u_\gamma-\overrightarrow{u})\overrightarrow{g}\|_{L^1(I;H)}^2\leq
%  \tilde{c} \| u_\gamma-\overrightarrow{u}\|_{L^1(I)}^2 \xrightarrow{\gamma \rightarrow 0}0
%  \end{matrix}
%  \end{align*}
%  by Corollary \ref{strict_conv_BV}.
Furthermore, using Theorem \ref{weak*conv_reg_prob} in the last inequality of the following computation we find
  \begin{align*}
  \begin{matrix}
  \|\partial_t \psi_{\gamma,i } - \partial_t p_{1,i}\|_{L^2(I)}^2=\left\|
  \int\limits_{\Omega}L^*(L((\overline{u}_\gamma-\overline{u})\cdot\overrightarrow{g}))g_idx
  \right\|_{L^2(I)}^2\leq \int\limits_0^T \left(
  \int\limits_\Omega \vert L^*(L((\overline{u}_\gamma-\overline{u})\cdot\overrightarrow{g}))\vert \cdot \vert g_i\vert dx
  \right)^2dt\\ \\
  \leq
  \int\limits_0^T \left(
  \| L^*(L((\overline{u}_\gamma-\overline{u})\cdot\overrightarrow{g}))\|_{H} \cdot \| g_i\|_{H}
  \right)^2dt= c\|L^*(L((\overline{u}_\gamma-\overline{u})\cdot\overrightarrow{g}))\|_{L^2(I;H)}^2\\ \\
  \leq c\|L^*(L((\overline{u}_\gamma-\overline{u})\cdot\overrightarrow{g}))\|_{C(\overline{I};H)}^2
  %\underbrace{\leq}_{(\ref{EnergyEsti})} c\|L((u_\gamma-\overrightarrow{u})\cdot\overrightarrow{g})\|_{L^1(I;H)}^2
  %\\ \\
  \underbrace{\leq}_{(\ref{EnergyEsti})}
  c\|(\overline{u}_\gamma-\overrightarrow{u})\cdot\overrightarrow{g}\|_{L^1(I;H)}^2\leq c\|\overline{u}_\gamma-\overline{u}\|_{L^1(I)}^2\underbrace{\xrightarrow{\gamma \rightarrow 0}}_{\begin{matrix}
  	\overline{u}_\gamma \xrightarrow{BV,w^*}\overline{u}
  	\end{matrix}}0.
  \end{matrix}
  \end{align*}
%Thus, we have $\|\psi_{\gamma,i }- p_{1,i}\|_{H^1(I)}\xrightarrow{\gamma \rightarrow 0} 0$, for $i=1,\cdots,m$.
Since $\psi_{\gamma,i}-p_{1,i}\in H^1(I)$ with $(\psi_{\gamma,i}-p_{1,i})(T)=0$ this implies that $\|\psi_{\gamma,i }- p_{1,i}\|_{H^1(I)}\xrightarrow{\gamma \rightarrow 0}0$.
%\footnote{Consider the sub-space $\hat{V}:=\lbrace \phi\in H^1(I)\vert \phi(T)=0 \rbrace$ of $H^1(I)$ with the norm $\|\phi\|_{\hat{V}}:=\|\partial_t\phi(s)\|_{L^2(I)}$. The norm $\|\phi\|_{\hat{V}}$ is equivalent to $\|\phi\|_{H^1(I)}$ due to the following calculations:
%\begin{align}
%	\begin{matrix}
%	\int\limits_I \vert \phi(s)\vert^2ds=\int\limits_I \left\vert
%	-\int^T_s \partial_t\phi(\sigma)d\sigma
%	\right\vert^2ds \leq c\|\partial_t \phi\|^2_{L^2(I)}.
%	\end{matrix}
%\end{align}}
 Let us show that $\|\partial_{tt}\psi_{\gamma,i }- \partial_{tt}p_{1,i}\|_{L^2(I)}\xrightarrow{\gamma \rightarrow 0}0$ holds as well.
  For this purpose, utilizing the dominated convergence theorem and Theorem \ref{weak*conv_reg_prob} we obtain
  \begin{align*}
  \begin{matrix}
  \|\partial_{tt} \psi_{\gamma,i } - \partial_{tt} p_{1,i}\|_{L^2(I)}^2=\left\|\partial_t
  \int\limits_{\Omega}L^*(L((\overline{u}_\gamma-\overline{u})\cdot\overrightarrow{g}))g_idx
  \right\|_{L^2(I)}^2\\ \\
  =
  \int\limits_0^T \left(
  \int\limits_\Omega\partial_t L^*(L((\overline{u}_\gamma-\overline{u})\cdot\overrightarrow{g}))g_idx
  \right)^2dt\leq \int\limits_0^T \left(
  \int\limits_\Omega \vert \partial_tL^*(L((\overline{u}_\gamma-\overline{u})\cdot\overrightarrow{g}))\vert \cdot \vert g_i\vert dx
  \right)^2dt\\ \\
  \leq
  \int\limits_0^T \left(
  \| \partial_tL^*(L((\overline{u}_\gamma-\overline{u})\cdot\overrightarrow{g}))\|_{H} \cdot \| g_i\|_{H}
  \right)^2dt= c\|\partial_tL^*(L((\overline{u}_\gamma-\overline{u})\cdot\overrightarrow{g}))\|_{L^2(I;H)}^2
      \end{matrix}
  \end{align*}
  \begin{align*}
  \begin{matrix}
  \leq c\|\partial_tL^*(L((\overline{u}_\gamma-\overline{u})\cdot\overrightarrow{g}))\|_{C(\overline{I};H)}^2
  %\underbrace{\leq}_{(\ref{EnergyEsti})} c\|L((u_\gamma-\overrightarrow{u})\cdot\overrightarrow{g})\|_{L^1(I;H)}^2\\ \\
  \underbrace{\leq}_{(\ref{EnergyEsti})}
  c\|(\overline{u}_\gamma-\overline{u})\cdot\overrightarrow{g}\|_{L^1(I;H)}^2\leq c\|\overline{u}_\gamma-\overline{u}\|_{L^1(I)^m}^2\xrightarrow{\gamma \rightarrow 0}0.
  \end{matrix}
  \end{align*}
%  Finally, we have shown that
%  \begin{align*}
%  \begin{matrix}
%  \|\psi_{\gamma,i }- p_{1,i}\|_{L^2(I)}^2+ \|\partial_t\psi_{\gamma,i }- \partial_tp_{1,i}\|_{L^2(I)}^2\xrightarrow{\gamma \rightarrow 0}0 &
%  \text{and} &
%  \|\partial_{tt}\psi_{\gamma,i }- \partial_{tt}p_{1,i}\|_{L^2(I)}^2\xrightarrow{\gamma \rightarrow 0}0
%  \end{matrix}
%  \end{align*}
% which implies $\|\psi_{\gamma,i }- p_{1,i}\|_{H^2(I)}\xrightarrow{\gamma \rightarrow 0} 0$, for $i=1,\cdots,m$.
%
%
Consider now the case $\kappa=0$:
%Let us show that
%  \begin{align*}
%  \begin{matrix}
%  \left\langle -\frac{\psi_{\gamma,i}}{\alpha_i},v_{\gamma,i}\right\rangle_{C_0(I),M(I)} \xrightarrow{\gamma \rightarrow 0}
%  \left\langle -\frac{p_{1,i}}{\alpha_i},\overline{v}_{i}\right\rangle_{C_0(I),M(I)} = \|\overline{v}_i\|_{M(I)}
%  \end{matrix}
%  \end{align*}
% holds.
To verify (\ref{E1Korr}) let us note that $\psi_{\gamma,i}\in H^1_0(I)$ since $\kappa=0$.
 Because $H^2(I)$ continuously embeds into $C^0(\overline{I})$, and $\psi_{\gamma,i},p_{1,i}\in C_0(I)$ we have $\|\psi_{\gamma,i} -p_{1,i}\|_{C_0(I)}\xrightarrow{\gamma \rightarrow 0}0$, for $i=1,\cdots,m$. 
 %Furthermore, we have $\psi_{\gamma,i},p_{1,i}\in C_0(I)$ and thus $\psi_{\gamma,i} \xrightarrow{C_0(I)}p_{1,i}$, for $i=1,\cdots,m$. 
 Utilizing the weak* convergence of $\overrightarrow{v}_{\gamma,i}\xrightarrow{w^*,M(I)}\overrightarrow{v}_i$ and the strong convergence $\psi_{\gamma,i} \xrightarrow{C_0(I)}p_{1,i}$, for $i=1,\cdots,m$, we achieve the desired result.

 We turn to verify (\ref{E2Korr}). Since $\kappa\neq 0$ we do not have that $\psi_{\gamma,i}\in H^1_0(I)$.
 % Next, we consider the case for which $\kappa(\gamma)=c_\kappa \tilde{f}(\gamma)$ with $c_\kappa>0$ and $\tilde{f}$ as we have assumed below Problem $(P^1_{\gamma})$.
%Let us show $\int\limits_I -\frac{\psi_{\gamma,i}}{\alpha_i}dv_{\gamma,i}(s) \xrightarrow{\gamma \rightarrow 0}
% \left\langle -\frac{p_{1,i}}{\alpha_i},\overline{v}_{i}\right\rangle_{C_0(I),M(I)}$, for $i=1,\cdots,m$.
 Consider the following function
$
  \varphi(t)=cos(\frac{\pi}{T}t) \mathbf{1}_{\left[0,\frac{T}{2}\right)}(t)
$,
which satisfies $\varphi(0)=1$, $\varphi(t)=0$, for $t\geq \frac{T}{2}$, $\varphi \in C(\overline{I})$ and thus
 \begin{align}\label{E3Korr}
  \begin{matrix}
  \int\limits_I\psi_{\gamma,i}d\overrightarrow{v}_{\gamma,i}= \left\langle
  \psi_{\gamma,i}+\kappa(\gamma)c_{\gamma,i}\varphi,\overrightarrow{v}_{\gamma,i}\right\rangle_{C_0,M(I)}-\int\limits_I\kappa(\gamma)\overrightarrow{c}_{\gamma,i}\varphi d\overrightarrow{v}_{\gamma,i}.
  \end{matrix}
 \end{align}
% where the first equality holds, because
% \begin{align*}
%  \begin{matrix}
%  (1) & \psi_{\gamma,i}+\kappa(\gamma)c_{\gamma,i}\varphi\in C(\overline{I}),\\ \\
%  (2) & \text{the support of }\varphi \text{ is }[0,\frac{T}{2}],\\ \\
%  (3) & \psi_{\gamma,i}(T)=0,\\ \\
%  (4) & \text{and }\psi_{\gamma,i}(0)+\kappa(\gamma)c_{\gamma,i}\varphi(0)\underbrace{=}_{(2)\text{ in Lemma }  \ref{maxminFormula1}} 0.
%  \end{matrix}
% \end{align*}
 Furthermore, we have by the convergence of $\psi_{\gamma,i} \xrightarrow[\gamma\rightarrow 0]{H^2(I)}p_{1,i}$, for $i=1,\cdots,m$, and by the embedding $H^2(I)\hookrightarrow C(\overline{I})$ that $\psi_{\gamma,i} \xrightarrow[\gamma\rightarrow 0]{C(\overline{I})}p_{1,i}$. Due to weak* convergence $\overline{u}_\gamma\xrightarrow[BV(I)^m]{w^*}\overline{u}$, we have that $(\overline{u}_\gamma)_\gamma$ is bounded in $BV(I)^m$ for $\gamma\rightarrow 0$. By the isomorphism in Lemma \ref{isoBV} we get that $(D_t\overline{u}_\gamma,\overline{u}_\gamma(0))=(\overrightarrow{v}_\gamma,\overrightarrow{c}_\gamma)$  is bounded in $M(I)^m \times \mathbb{R}^m$ as well.
 Given the boundedness of $\vert \overrightarrow{c}_{\gamma,i}\vert$ it holds that $\|\kappa(\gamma)\overrightarrow{c}_{\gamma,i}\varphi\|_{\infty}\leq \kappa(\gamma)\vert \overrightarrow{c}_{\gamma,i}\vert \|\varphi\|_{\infty}\xrightarrow{\gamma
 \rightarrow 0}0$. Summing up, we have
$
  \psi_{\gamma,i} \xrightarrow[\gamma\rightarrow 0]{C(\overline{I})}p_{1,i}$ and $\kappa(\gamma)\overrightarrow{c}_{\gamma,i}\varphi\xrightarrow[\gamma\rightarrow 0]{C(\overline{I})}0,
  $
 which implies $\psi_{\gamma,i} +\kappa(\gamma)\overrightarrow{c}_{\gamma,i}\varphi \xrightarrow[\gamma\rightarrow 0]{C_0(\overline{I})}p_{1,i}$.
%  By $(1)-(4)$, we get
% $
%  \psi_{\gamma,i} +\kappa(\gamma)c_{\gamma,i}\varphi \xrightarrow[\gamma\rightarrow 0]{C_0(I)}p_{1,i}.
% $
% Based on the strong convergence in $C_0(I)$ and
 Together with weak* convergence in $M(I)$ of $\overrightarrow{v}_{\gamma,i}$ to $\overrightarrow{v}_i$, we get
 \begin{align}\label{E4Korr}
  \begin{matrix}
  \left\langle
  \psi_{\gamma,i}+\kappa(\gamma)\overrightarrow{c}_{\gamma,i}\varphi,\overrightarrow{v}_{\gamma,i}\right\rangle_{C_0,M(I)} \xrightarrow{\gamma \rightarrow 0} \left\langle p_{1,i},\overrightarrow{v}_{i}\right\rangle_{C_0(I),M(I)}.
  \end{matrix}
 \end{align}
% Let us now discuss the convergence of $\int\limits_I\kappa(\gamma)c_{\gamma,i}\varphi dv_{\gamma,i}$ with respect to $\gamma \rightarrow 0$.
%  We have
% \begin{align*}
%  \begin{matrix}
%  \int\limits_I\kappa(\gamma)c_{\gamma,i}\varphi dv_{\gamma,i} \left\lbrace\begin{matrix}
%  \geq &-\int\limits_I\vert \kappa(\gamma)c_{\gamma,i}\varphi \vert dv_{\gamma,i}
%  \\\leq &\int\limits_I\vert \kappa(\gamma)c_{\gamma,i}\varphi\vert  dv_{\gamma,i}
%  \end{matrix} \right.
%  \end{matrix}
% \end{align*}
 Due to the boundedness of $(\overrightarrow{v}_{\gamma})_\gamma$ in $M(I)^m$ and $(\overrightarrow{c}_\gamma)_\gamma$ in $\mathbb{R}^m$, for $\gamma\rightarrow 0$, we get
 \begin{align}\label{key23wa}
  \begin{matrix}
  \left\vert\int\limits_I \kappa(\gamma)\overrightarrow{c}_{\gamma,i}\varphi d\overrightarrow{v}_{\gamma,i}\right\vert \leq \int\limits_I \vert \kappa(\gamma)\overrightarrow{c}_{\gamma,i}\varphi\vert  d\overrightarrow{v}_{\gamma,i} \underbrace{\leq}_{\vert\varphi\vert\leq 1} \kappa(\gamma) \vert \overrightarrow{c}_{\gamma,i} \vert \|\overrightarrow{v}_{\gamma,i}\|_{M(I)} \xrightarrow{\gamma \rightarrow 0} 0.
  \end{matrix}
 \end{align}
	Finally, we consider (\ref{E4Korr}), and (\ref{key23wa}) in (\ref{E3Korr}) and get (\ref{E2Korr}).
%
%  Finally, we have shown that
% \begin{align*}
% \begin{matrix}
% \int\limits_I\psi_{\gamma,i}dv_{\gamma,i}= \left\langle
% \psi_{\gamma,i}+\kappa(\gamma)c_{\gamma,i}\varphi,v_{\gamma,i}\right\rangle_{C_0,M(I)}-\int\limits_I\kappa(\gamma)c_{\gamma,i}\varphi dv_{\gamma,i}\xrightarrow{\gamma \rightarrow 0} \left\langle -\frac{p_{1,i}}{\alpha_i},\overline{v}_{i}\right\rangle_{C_0(I),M(I)},
% \end{matrix}
% \end{align*}
% for $i=1,\cdots,m$.
 \end{proof}

	\subsection{Regularization - Semi-smooth Newton Method}
	\label{SuperLinSection}
	In this section, we discuss the semi-smooth Newton method which is used to construct a sequence in $L^2(\Omega)^m\times\mathbb{R}^m$ that solves the first-order condition (1), (2) in Lemma \ref{maxminFormula1} in the limit. Later in section \ref{numericsection} a BV-path following algorithm is presented which uses these method, see Algorithm \ref{Algo1}.

	At first, let us introduce 
		\begin{align*}
		\begin{matrix}
			F_{\gamma}:L^2(I)^m\times \mathbb{R}^m \rightarrow L^2(I)^m\times \mathbb{R}^m, & &
			F_{\gamma}(v,c):=\left(\begin{matrix}
				v-\prox^\gamma_{\sum\limits_i\alpha_i \|\cdot\|_{L^1(I)}}(-\frac{1}{\gamma} \tilde{\pi}(v,c))\\
				\frac{\kappa(\gamma)}{\gamma}c+\frac{1}{\gamma}\tilde{\pi}(v,c)(0)
			\end{matrix}\right)
		\end{matrix}
	\end{align*}
	where
	\begin{align}\label{ProxDef}
	\begin{matrix}
	\prox^\gamma_{\sum\limits_i\alpha_i \|\cdot\|_{L^1(I)}}(-\frac{1}{\gamma} \tilde{\pi}(v,c))=\left(\begin{matrix}\max
	\left(0,-\frac{1}{\gamma} \tilde{\pi}_{i}(v,c) - \frac{\alpha_i}{\gamma}\right)+\\
	+\min\left(0,-\frac{1}{\gamma}\tilde{\pi}_{i}(v,c) + \frac{\alpha_i}{\gamma}\right)\end{matrix} \right)_{i=1}^m\in L^2(I)^m,
	\\ \\
	\tilde{\pi}(v,c)(s)=\int\limits_{\Omega}\int\limits_s^T L^*(S(v,c)-y_d)\overrightarrow{g} dtdx\in L^2(I)^m,
	\end{matrix}
	\end{align}
	and observe that $F_\gamma(\overrightarrow{v},\overrightarrow{c})=0$ is equivalent to (1), (2) in Lemma \ref{maxminFormula1}.

	Consider the following definition of \cite[p. 120 et seq.]{[HPUU]}:
	\begin{definition} Let $G:X\rightarrow Y$ be a continuous operator, between Banach spaces $X$ and $Y$. Further, let us consider a set-valued mapping $\partial G: X \rightrightarrows L(X,Y)$ with non-empty images. We call $\partial G$ a generalized differential. We define the operator $G$ to be $\partial G$-semi-smooth or simply semi-smooth in $x^*$, if
			\begin{align*}
			\begin{matrix}
			\sup\limits_{M\in \partial G(x^*+d)} \| G(x^*+d) -G(x^*)-Md \|_Y=o(\| d \|_X) \text{ for }\|d\|_X\rightarrow 0
			\end{matrix}
			\end{align*}
	\end{definition}
	We recall the following theorem from \cite[Theorem 1.1]{[hik]}:
	\begin{theorem}\label{superlin}
	Suppose that $x^*$ is a solution of the equation $G(x^*)=0$ and that $G$ is $\partial G$-semi-smooth in a neighborhood $U$ in $X$ containing $x^*$. If the set $\partial G(x)$ contains only non-singular mapping and if $\lbrace \|M^{-1}\|\text{ }\vert\text{ } M\in \partial G(x) \rbrace $ is bounded for all $x\in U$, then the Newton iteration
	\begin{align}\label{NewtonAlgo}
	\begin{matrix}
	x^{k+1}=x^k-M^{-1}G(x^k), \text{ for any $M\in \partial G(x^k)$}
	\end{matrix}
	\end{align}
	converges super linearly to $x^*$, provided that $\|x^0 - x^*\|$ is sufficiently small.
	\end{theorem}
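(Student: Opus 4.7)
The plan is to give the standard short algebraic proof that turns the semi-smoothness estimate directly into a super-linear recursion on the error. First I would use that $G(x^*)=0$ to rewrite the Newton step in (\ref{NewtonAlgo}) with $M_k\in\partial G(x^k)$ as
\begin{align*}
x^{k+1}-x^* \;=\; (x^k-x^*) - M_k^{-1}G(x^k) \;=\; -M_k^{-1}\bigl(G(x^k)-G(x^*)-M_k(x^k-x^*)\bigr),
\end{align*}
so that $M_k^{-1}$ acts precisely on the ``linearization residual'' that the $\partial G$-semi-smoothness controls.

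Next I would take norms, let $C:=\sup\{\|M^{-1}\|\,:\,x\in U,\,M\in\partial G(x)\}<\infty$ (this is the hypothesis), and set $d_k:=x^k-x^*$. This gives
\begin{align*}
\|x^{k+1}-x^*\|_X \;\leq\; C\,\|G(x^*+d_k)-G(x^*)-M_k d_k\|_Y \;\leq\; C\sup_{M\in\partial G(x^*+d_k)}\|G(x^*+d_k)-G(x^*)-M d_k\|_Y.
\end{align*}
By semi-smoothness of $G$ at $x^*$ the right-hand side is $o(\|d_k\|_X)$, so
\begin{align*}
\|x^{k+1}-x^*\|_X \;=\; o(\|x^k-x^*\|_X)\qquad \text{as } x^k\to x^*,
\end{align*}
which is exactly super-linear convergence, provided the iterates actually stay in the neighborhood $U$.

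For the last point I would fix any $q\in(0,1)$ and use the $o$-estimate to pick $\rho>0$ with $\overline{B_\rho(x^*)}\subset U$ such that $\|x^{k+1}-x^*\|_X\leq q\|x^k-x^*\|_X$ whenever $x^k\in \overline{B_\rho(x^*)}$. A straightforward induction then shows that, if $\|x^0-x^*\|_X\leq\rho$, the whole sequence lies in $\overline{B_\rho(x^*)}$, so the bound above applies at every iteration and yields $x^k\to x^*$ together with the super-linear rate. The main obstacle in this proof is conceptual rather than technical: the entire argument is essentially three lines, and the real work in the present paper lies in verifying the two hypotheses for the specific map $F_\gamma$, namely $\partial F_\gamma$-semi-smoothness of the proximity-operator composition in (\ref{ProxDef}) and uniform invertibility of the generalized differentials; once those are in hand, the theorem itself is a direct consequence of the semi-smoothness definition, the identity $G(x^*)=0$, and the uniform bound on $\|M^{-1}\|$.
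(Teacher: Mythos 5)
Your proof is correct and is exactly the standard argument: the paper does not prove Theorem \ref{superlin} itself but only cites it from \cite{[hik]} (Theorem 1.1), and your chain — the error identity $x^{k+1}-x^*=-M_k^{-1}\bigl(G(x^k)-G(x^*)-M_k(x^k-x^*)\bigr)$, the uniform bound on $\|M^{-1}\|$, the semi-smoothness $o$-estimate at $x^*$, and the induction keeping all iterates in a ball inside $U$ — is precisely the argument behind that citation. The one reading you must (and do) make explicit is that the boundedness of $\lbrace \|M^{-1}\| \,\vert\, M\in\partial G(x)\rbrace$ is meant uniformly over $x\in U$, since otherwise the constant $C$ in your estimate need not exist; this uniform version is the hypothesis actually used in \cite{[hik]}.
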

	\begin{remark}
	Let us note that \cite[Theorem 1.1]{[hik]} is actually more general. The authors are using the slant differentiability which is a weaker concept than the semi-smoothness, see \cite[p. 868]{[hik]}.
	\end{remark}
	In the following, we prove that all conditions needed for Theorem \ref{superlin} hold for $G=F_\gamma$ and $\kappa\neq 0$ with $x^*=(\overrightarrow{v},\overrightarrow{c})$ and $X=Y=L^2(I)^m\times \mathbb{R}^m$. If the initial value $x^0$ is sufficiently close to $x^*$ this guarantees that the sequence $(x^k)_{k\in \mathbb{N}}$ in (\ref{NewtonAlgo}) converges super linearly in $L^2(I)^m\times \mathbb{R}^m$ to $x^*$ with respect to $F_\gamma$.

\begin{definition}
	Define the following operators for $(\overrightarrow{h},\overrightarrow{k})\in L^2(I)^m\times
	\mathbb{R}^m$:
	\begin{align*}
	\begin{matrix}
	(B^*L^*LB)_1(\cdot,\overrightarrow{k}): & L^2(I)^m & \rightarrow &L^2(I)^m, & \text{ by }
	& \overrightarrow{h} & \mapsto & \int\limits_{\cdot}^T\int\limits_{\Omega}L^*(L(B(\overrightarrow{h},\overrightarrow{k})))\overrightarrow{g}dtdx& \\ \\
	(B^*L^*LB)_2(\overrightarrow{h},\cdot): & \mathbb{R}^m & \rightarrow &\mathbb{R}^m, & \text{ by }
	& \overrightarrow{k} & \mapsto & \int\limits_{\Omega_T}L^*(L(B(\overrightarrow{h},\overrightarrow{k})))\overrightarrow{g}dtdx.
	\end{matrix}
	\end{align*}
	Furthermore, we write for $i=1,\cdots,m$:
	\begin{align*}
	\begin{matrix}
	(B^*L^*LB)_{1,i}(\overrightarrow{h},\overrightarrow{k})=\int\limits_{\Omega}\int\limits_s^T L^*(L(B(\overrightarrow{h},\overrightarrow{k})))g_i dtdx \in L^2(I),\\ \\
	(B^*L^*LB)_{2,i}(\overrightarrow{h},\overrightarrow{k})=\int\limits_{\Omega_T} L^*(L(B(\overrightarrow{h},\overrightarrow{k})))g_i dtdx \in \mathbb{R}.
	\end{matrix}
	\end{align*}
\end{definition}
		
For any function $\Upsilon:X\rightarrow Y$, with $X$ and $Y$ Banach spaces, we denote by $D\Upsilon(x)(z)$ the directional derivative of $\Upsilon$ in $x$ in direction $z$.

%	At first, let us note that the operator $F_\gamma$ is continuous because $\int\limits_{\Omega_T}L^*(S(\cdot)-y_d)\overrightarrow{g}dtdx$ is continuous. Next we discuss the Nemyzki operators $\max$ and $\min$:
%	
%	
%	
%	Consider at first the superposition operators $\max$ or $\min$, i.e.
%	\begin{align*}
%	\begin{matrix}
%	\max/\min : L^p(I) &  \rightarrow & L^2(I)\\
%	\varphi & \mapsto & \max/\min (0,\varphi)
%	\end{matrix}
%	\end{align*}
%	Both are well defined for $p\geq 2$ and continuous. In our case we insert the continuous operators
%	\begin{align*}
%	\begin{matrix}\overrightarrow{u}\mapsto
%	-\frac{1}{\gamma}\psi(s) \pm \frac{\alpha_i}{\gamma}, \text{ where }\psi(s)=\int\limits_{\Omega}\int\limits_s^T L^*(S\overrightarrow{u}-y_d)\overrightarrow{g} dtdx
%	\end{matrix}
%	\end{align*}
%	mapping $\overrightarrow{u}\in L^2(I)^m\times \mathbb{R}^m$ into $L^\infty(I)$, into the superposition operators $\max$ and $\min$. Subsequently we consider $\max / \min$ as operators from $L^\infty(I)$ to $L^2(I)$. This result in the function $\prox^\gamma_{\sum\limits_i\alpha_i \|\cdot\|_{L^1(I)}}(-\frac{1}{\gamma}\psi(s))$ that is a continuous operator from $L^2(I)^m \times \mathbb{R}^m$ into $L^2(I)^m \times \mathbb{R}^m$ with $\psi(s)=\int\limits_{\Omega}\int\limits_s^T L^*(S\overrightarrow{u}-y_d)\overrightarrow{g} dtdx$.
%	
%	

	Let us recall that the point-wise maximum and minimum operation from $L^p$ to $L^2$ are semi-smooth if $p>2$ (Norm gap), and a Newton derivative in $f\in L^p(I)$ in direction $h\in L^p(I)$ is given by
		\begin{align*}
		\begin{matrix}
		\left\lbrace \begin{matrix}
		\mathds{1}_{(0,\infty)}(f)(h)\vert_{h\in L^p(I)} & \text{ for }\max,\\ \\
		\mathds{1}_{(-\infty,0)}(f)(h)\vert_{h\in L^p(I)} & \text{ for }\min.
		\end{matrix}\right.
		\end{matrix}
		\end{align*}
		Hence, we get for $	\tilde{\max}/\tilde{\min}:L^p(I)^m \rightarrow L^2(I)^m$, $\tilde{\max}(\overrightarrow{f}):=(\max(f_i))_{i=1,...,m}$ respectively $\tilde{\min}(\overrightarrow{f})$ $:=(\min(f_i))_{i=1,...,m}$, the following Newton derivative in $\overrightarrow{f}\in L^p(I)^m$ in direction $\overrightarrow{h}\in L^p(I)^m$:
		\begin{align*}
		\begin{matrix}
		D\tilde{\max}/\tilde{\min}(\overrightarrow{f})(\overrightarrow{h})=\left(\begin{matrix}
		\mathds{1}_{\mathcal{J}}(f_1) & 0 & ... & ... & 0\\
		0 & \mathds{1}_{\mathcal{J}}(f_2) & 0  & ... & 0\\
		0 & 0 & . & ... & .\\
		. & . & . & ... & .\\
		. & . & . & \mathds{1}_{\mathcal{J}}(f_{m-1}) & 0 \\
		0 & 0 & ... & 0 & \mathds{1}_{\mathcal{J}}(f_{m})
		\end{matrix} \right) \left( \begin{matrix}
		h_1\\
		.\\
		.\\
		.\\
		h_{m-1}\\
		h_m
		\end{matrix}\right)\\ \\
		:=\mathds{1}_{\mathcal{J}}(\overrightarrow{f})(\overrightarrow{h}):=diag((\mathds{1}_{\mathcal{J}}f_i)_i)\overrightarrow{h}
		\end{matrix}
		\end{align*}
		for $\mathcal{J}=(0,\infty)$ in case of $max$ and $(-\infty,0)$ in case of $\min$. The matrix $\mathds{1}_{\mathcal{J}}(\overrightarrow{f})$ has only values equal to 1 or 0 on its diagonal.
		\begin{lemma}\label{key1245}\label{Case2Df}
			The first derivative of $F_{\gamma}$ in $(v,c)\in L^2(I)^m\times \mathbb{R}^m$ has the following form:
			\begin{align*}
			\begin{matrix}
			DF_{\gamma}(v,c)=\begin{pmatrix}
			id_{L^2(I)^m} & 0\\
			0 & \frac{\kappa(\gamma)}{\gamma}id_{\mathbb{R}^m}
			\end{pmatrix}+\frac{1}{\gamma}\begin{pmatrix}
			\overline{\diag} & 0\\
			0 & id_{\mathbb{R}^m}
			\end{pmatrix}B^*L^*LB\\ \\
			=\begin{pmatrix}
			id_{L^2(I)^m} +\frac{1}{\gamma}\overline{\diag}\cdot(B^*L^*LB)_1(\cdot,0) & & & \frac{1}{\gamma}\overline{\diag}\cdot(B^*L^*LB)_1(0,\cdot) \\ \\
			\frac{1}{\gamma}(B^*L^*LB)_2(\cdot,0) & & & \frac{\kappa(\gamma)}{\gamma} id_{\mathbb{R}^m}+\frac{1}{\gamma}(B^*L^*LB)_2(0,\cdot)
			\end{pmatrix}\\ \\
						 \text{ with }
				\end{matrix}
			\end{align*}
			\begin{align}
			\begin{matrix}
		 \overline{\diag}:=\left(\begin{matrix}\mathds{1}_{(0,\infty)}\left( -\frac{1}{\gamma}  \tilde{\pi}(v,c)-\frac{\overrightarrow{\alpha}}{\gamma}\right)
			+\mathds{1}_{(-\infty,0)}\left( -\frac{1}{\gamma}  \tilde{\pi}(v,c)+\frac{\overrightarrow{\alpha}}{\gamma}\right)\end{matrix}\right).
			\end{matrix}
			\end{align}
			In particular, we have for $(\overrightarrow{h},\overrightarrow{k})\in L^2(I)^m \times \mathbb{R}^m$:
			\begin{align}\label{newProof}
			\begin{matrix}
			DF_{\gamma}(v,c)\left( \begin{matrix}
			\overrightarrow{h}\\ \overrightarrow{k}\end{matrix}\right)(s) =  \left(\begin{matrix}
			\overrightarrow{h}(s)+\overline{\diag}\cdot
			\left( \frac{1}{\gamma}  \int\limits_{\Omega}\int\limits_s^T L^*(L(B(\overrightarrow{h},\overrightarrow{k})))\overrightarrow{g}dtdx \right)\\ \\
			\frac{\kappa(\gamma)}{\gamma}\overrightarrow{k}+\frac{1}{\gamma}\int\limits_{\Omega_T}L^*(L(B(\overrightarrow{h},\overrightarrow{k})))\overrightarrow{g}dtdx
			\end{matrix} \right).
			\end{matrix}
			\end{align}
			Furthermore, the function $F_\gamma$ with $DF_\gamma(v,c)$ as generalized derivative is semi-smooth for all $ (v,c)\in L^2(I)^m\times \mathbb{R}^m$.
 		\end{lemma}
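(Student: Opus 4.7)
I would split $F_\gamma$ into its affine-linear building blocks (the identity, the map $(v,c)\mapsto \tilde{\pi}(v,c)$, and the point evaluation at $0$) on the one hand, and the pointwise operations $\tilde{\max}$, $\tilde{\min}$ appearing in the prox representation (\ref{ProxDef}) on the other hand. The derivative formula and the semi-smoothness claim then follow by combining Fréchet derivatives of the former with Newton derivatives of the latter via the chain rule.

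For the explicit formula for $DF_\gamma$, the second (finite-dimensional) row of $F_\gamma$ is affine-linear in $(v,c)$, so differentiating it directly reproduces the bottom row stated in the lemma. For the upper row I would use that a Newton derivative of the componentwise $\tilde{\max}$ at an argument $f$ is pointwise multiplication by $\mathds{1}_{(0,\infty)}(f)$, and analogously $\mathds{1}_{(-\infty,0)}(f)$ for $\tilde{\min}$. Plugging in the arguments $-\tfrac{1}{\gamma}\tilde{\pi}(v,c)\mp \tfrac{\overrightarrow{\alpha}}{\gamma}$, the two indicator sets have componentwise disjoint supports because $\alpha_i>0$, so their sum is precisely the diagonal matrix $\overline{\diag}$. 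Combining this with $D\tilde{\pi}(v,c)(\overrightarrow{h},\overrightarrow{k})=(B^{*}L^{*}LB)_1(\overrightarrow{h},\overrightarrow{k})$, which holds by linearity since $\tilde{\pi}$ is affine in $(v,c)$ up to the fixed term $-y_d$, yields the upper row and the form (\ref{newProof}).

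For semi-smoothness I would invoke the classical norm-gap result that $\tilde{\max},\tilde{\min}:L^p(I)^m \to L^2(I)^m$ are semi-smooth whenever $p>2$, with the Newton derivatives above, whereas they are not semi-smooth as self-maps of $L^2$. Hence I have to route the argument of the $\max/\min$ through an $L^p$-stronger space. By Theorem~\ref{RegTheoremWave} applied to $L^{*}$, one has $L^{*}(S(v,c)-y_d)\in C([0,T];V)\cap C^1([0,T];H)$ for every $(v,c)\in L^2(I)^m \times \mathbb{R}^m$; multiplying by $g_i\in L^\infty(\Omega)$, integrating over $\Omega$, and integrating from $s$ to $T$ then place $\tilde{\pi}(v,c)$ in $C^1(\overline{I})^m\hookrightarrow L^\infty(I)^m$. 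Consequently the affine-linear preprocessor $(v,c)\mapsto -\tfrac{1}{\gamma}\tilde{\pi}(v,c)\mp\tfrac{\overrightarrow{\alpha}}{\gamma}$ is continuous from $L^2(I)^m \times \mathbb{R}^m$ into $L^p(I)^m$ for every $p<\infty$. The semi-smooth chain rule (composition of a Lipschitz semi-smooth map with a linear continuous map into a stronger space is semi-smooth) together with Fréchet differentiability of the remaining linear pieces then gives semi-smoothness of $F_\gamma$ at every $(v,c)$, with $DF_\gamma(v,c)$ as generalized derivative.

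The main obstacle is exactly this norm-gap verification: without the smoothing action of the adjoint wave equation combined with the temporal integration from $s$ to $T$, the $\max/\min$ pieces would only be continuous, not semi-smooth, between $L^2$ spaces, and the super-linear convergence result in Theorem~\ref{superlin} could not be invoked; everything else is a direct chain-rule bookkeeping.
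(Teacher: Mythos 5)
Your proposal is correct and follows essentially the same route as the paper, whose proof simply cites the semi-smoothness of the pointwise $\max/\min$ operations from $L^p$ to $L^q$ with $p>q$ (norm gap) together with the Newton-derivative formulas stated just before the lemma. Your additional bookkeeping — the affine decomposition, the chain rule, and the observation that the wave-equation regularity places $\tilde{\pi}(v,c)$ in $L^\infty(I)^m$ so the norm gap is genuinely available — is exactly the detail the paper leaves implicit.
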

 	\begin{proof}
 		Lemma \ref{key1245} is a consequence of the semi-smoothness of $\max/\min$ from $L^p$ to $L^q$ with $p>q\geq 1$.
 	\end{proof}
	 	Let us write $\overline{diag}$ for $\overline{diag}(s):=diag(X_1(s),\cdots,X_m(s))$ with $X_i:I\rightarrow \lbrace 0,1\rbrace$. The maps $X_i$ are $\mathcal{B}(I)-2^{\lbrace 0,1\rbrace}$-measurable. Hence, we can define the measurable sets $I_{0,i}:=\lbrace s\in I \vert  X_i(s)=0\rbrace,I_{1,i}:=\lbrace s\in I \vert X_i(s)=1 \rbrace$.
	\begin{lemma}\label{key28}
	The linear continuous operator
		\begin{align*}
		\begin{matrix}
		B^*L^*LB:L^2(I)^m \times \mathbb{R}^m \rightarrow L^2(I)^m \times \mathbb{R}^m
		\end{matrix}
		\end{align*}
	is a self-adjoint non-negative and injective operator with spectrum inside $[0,\|B^*L^*LB\|]$.

	Furthermore, $G:=(\langle Lg_i,Lg_j\rangle_{L^2(\Omega_T)})_{i,j=1}^m$ is invertible and we have for all $h\in L^2(I)^m$ that the continuous affine linear  operator $(B^*L^*LB)_2(h,\cdot)$ is bijective with
		\begin{align}\label{key23}
		\begin{matrix}
		(B^*L^*LB)_2(h,k)=\phi \Leftrightarrow k=G^{-1}(\phi-(B^*L^*LB)_2(h,0)).
		\end{matrix}
		\end{align}
	\end{lemma}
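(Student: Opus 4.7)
The plan is to exploit that $B^*L^*LB$ is nothing but $(LB)^*(LB)$, and then reduce every claim to the injectivity of $L$ and of $B$ separately, using the disjointness of the supports $w_j = \supp(g_j)$.

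First, because $B$ and $L$ are bounded linear maps between Hilbert spaces, the composition $LB$ is bounded, and $B^*L^*LB = (LB)^*(LB)$. This observation immediately gives self-adjointness, and non-negativity follows from $\langle (LB)^*(LB)(v,c),(v,c)\rangle = \|LB(v,c)\|_{L^2(\Omega_T)}^2 \geq 0$. Spectral theory of bounded self-adjoint non-negative operators then places $\sigma(B^*L^*LB) \subseteq [0,\|B^*L^*LB\|]$. For injectivity, $B^*L^*LB(v,c)=0$ forces $\|LB(v,c)\|_{L^2(\Omega_T)}=0$, so it suffices to show $LB$ is injective. For $L$: if $L(f)=0$ then the weak formulation of $(\mathcal{W})$ with $y\equiv 0$ yields $\int_I\langle f,\eta\rangle_H\,dt = 0$ for all admissible $\eta$, hence $f=0$ in $L^2(\Omega_T)$. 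For $B$: if $B(v,c)=\sum_j\bigl(\int_{[0,t)}v_j\,ds+c_j\bigr)g_j=0$ almost everywhere, the pairwise disjointness of the supports $w_j$ together with $g_j\neq 0$ on $w_j$ forces $\int_{[0,t)}v_j\,ds+c_j = 0$ for a.a.\ $t$; differentiating gives $v_j=0$ and letting $t\to 0^+$ gives $c_j=0$.

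For the second half, I would first compute $(B^*L^*LB)_2(0,\cdot)$ explicitly. Since $B(0,k)(t,x)=\sum_j k_j g_j(x)$ is constant in $t$, one obtains
\begin{align*}
(B^*L^*LB)_{2,i}(0,k) &= \int_{\Omega_T} L^*L\bigl(\textstyle\sum_j k_j g_j\bigr)\, g_i \,dt\,dx \\
&= \sum_j k_j \langle Lg_j,Lg_i\rangle_{L^2(\Omega_T)} = (Gk)_i,
\end{align*}
where each $g_j$ is regarded as a time-constant element of $L^2(\Omega_T)$. Invertibility of $G$ follows from the same argument as injectivity of $LB$: if $\sum_j a_j Lg_j=0$, injectivity of $L$ yields $\sum_j a_j g_j=0$, and disjointness of supports forces $a_j=0$; hence $(Lg_j)_{j=1}^m$ is linearly independent and its Gram matrix $G$ is invertible. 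By linearity of $B^*L^*LB$ in its second component, $(B^*L^*LB)_2(h,k) = (B^*L^*LB)_2(h,0) + Gk$, so the equation $(B^*L^*LB)_2(h,k)=\phi$ is uniquely solvable in $k$ and the stated formula $k=G^{-1}\bigl(\phi-(B^*L^*LB)_2(h,0)\bigr)$ follows.

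The only mildly delicate step is the explicit computation $(B^*L^*LB)_2(0,\cdot)=G$, which requires consistently identifying $g_j$ (originally defined on $\Omega$) with its time-constant extension to $\Omega_T$ and interpreting the pairing $\int_{\Omega_T}L^*L(g_j)g_i$ as $\langle Lg_j,Lg_i\rangle_{L^2(\Omega_T)}$ via the adjoint relation. Once this identification is in place, every other step is either a direct consequence of $B^*L^*LB=(LB)^*(LB)$ or of the injectivity of $L$ together with the disjoint-support assumption on $(g_j)_{j=1}^m$.
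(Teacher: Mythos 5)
Your proof is correct and follows essentially the same route as the paper: identify $B^*L^*LB=(LB)^*(LB)$ for self-adjointness, non-negativity and the spectral bound, compute $(B^*L^*LB)_2(0,k)=Gk$ and invert the Gram matrix of the linearly independent family $(Lg_j)_{j=1}^m$. The only difference is cosmetic but welcome: you make explicit the injectivity of $B$ (via the disjoint supports) and of $L$ (via the weak formulation), a step the paper compresses into the phrase ``uniqueness of solutions of the wave equation defined by $L$.''
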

	\begin{proof}
%	The self-adjointness can be seen by the following argument for $\phi,\upsilon \in L^2(I)^m\times \mathbb{R}^m$:
%	\begin{align*}
%	\begin{matrix}
%	\left\langle
%	B^*L^*LB\phi,\upsilon
%	\right\rangle_{L^2(I)^m\times \mathbb{R}^m}=\left\langle 	LB\phi,LB\upsilon
%		\right\rangle_{L^2(\Omega_T)}=\left\langle \phi,B^*L^*LB\upsilon
%				\right\rangle_{L^2(I)^m\times \mathbb{R}^m}.
%	\end{matrix}
%	\end{align*}
	The non-negativity and injectivity can be seen by the strict inequality,
	\begin{align*}
	\begin{matrix}
		\left\langle
		B^*L^*LB\phi,\phi
		\right\rangle_{L^2(I)^m\times \mathbb{R}^m}=\left\langle 	LB\phi,LB\phi
			\right\rangle_{L^2(\Omega_T)}> 0 \text{ for }\phi\neq 0.
	\end{matrix}
	\end{align*}
	The strictness is a consequence of the uniqueness of solutions of the wave equation defined by $L$.

%	Due to the positivity of $B^*L^*LB$ we clearly have a spectrum inside $[0,\infty]$.
	The claim on the spectrum follows from selfadjointness and the fact that the spectral radius is $\|B^*L^*LB\|$, see \cite[Theorem VI.6]{ReedSim}.
%	
%	 Furthermore, we have by  \cite[Theorem VI.6]{ReedSim} that the spectral radius is $\|B^*L^*LB\|$ and hence the spectrum is inside $[0,\|B^*L^*LB\|]$.
%	
%	
%	

	Let us now show that $G$ is invertible.
	Given the linear independence of $(g_i)_{i=1}^m$ in $L^2(\Omega_T)$ we get that $(L(g_i))_{i=1}^m$ is linearly independent in $L^2(\Omega_T)$ by the uniqueness of solutions of the wave equation.
%	This can be seen as follows: W.l.o.g. consider the linearly independent $g_1,g_2$ patches, then we have for all $\lambda_1,\lambda_2 \in \mathbb{R}$ (for at least one $\lambda_i\neq 0$):
%	\begin{align*}
%				\begin{matrix}
%				\lambda_1 g_1+\lambda_2g_2\neq 0 \underbrace{\Rightarrow}_{\text{Uniq. Wave Sol.}} \lambda_1 L(g_1) + \lambda_2 L(g_2) \neq L(0)=0.
%				\end{matrix}
%	\end{align*}
%	Thus, we have that $span(L(g_i)\vert \text{ }i=1,\cdots,m)$ has dimension $m$ and is therefore isomorphic to $\mathbb{R}^m$.
	Further, introduce
$
			\langle \lambda,\mu\rangle_L=
		\langle L(\lambda\cdot \overrightarrow{g}),
			L(\mu\cdot \overrightarrow{g})\rangle_{L^2(\Omega_T)}
			=\langle \sum\limits_{i=1}^m\lambda_iL(g_i),\sum\limits_{j=1}^m\mu_jL(g_j)\rangle_{L^2(\Omega_T)}
$. This is an inner product in $\mathbb{R}^m$.
% can be seen by the argument below,
%		\begin{align*}
%		\begin{matrix}
%		1) & \text{linear in both arguments}\\ \\
%		2) & \text{symmetric}\\ \\
%		3) & \text{positive definite: }
%		\langle L(\lambda\cdot \overrightarrow{g}),
%		L(\lambda\cdot \overrightarrow{g})\rangle_{L^2(\Omega_T)}\geq 0\\ \\
%		4) & \text{if }\lambda=0\text{ we have }\langle L(\lambda\cdot \overrightarrow{g}),
%		L(\lambda\cdot \overrightarrow{g})\rangle_{L^2(\Omega_T)}= 0\\
%		& \text{and if }\langle L(\lambda\cdot \overrightarrow{g}),
%		L(\lambda\cdot \overrightarrow{g})\rangle_{L^2(\Omega_T)}= 0 \text{ we have:}\\ \\
%		& 0=\langle L(\lambda\cdot \overrightarrow{g}),
%		L(\lambda\cdot \overrightarrow{g})\rangle_{L^2(\Omega_T)}=\|\sum_{i=1}^{m}\lambda_iL(g_i)\|_{L^2(\Omega_T)}^2\\ &
%		\Rightarrow \sum_{i=1}^{m}\lambda_iL(g_i)=0 \text{ and by lin. indep. of }(L(g_i))_{i=1}^m \Rightarrow \lambda=0
%		\end{matrix}
%		\end{align*}
Hence, the Gram-Schmidt Matrix $G=(\langle e_i,e_j\rangle_L)_{i,j=1}^m\in \mathbb{R}^{m\times m}$ is invertible.

To verify (\ref{key23})	let us derive that
\begin{align*}
	\begin{matrix}
	(B^*L^*LB)_2(0,k)=\left(\langle L^*L(k\cdot \overrightarrow{g}),g_i\rangle_{L^2(\Omega_T)}\right)_{i=1}^m=\left(
	\left\langle
	\sum\limits_{j=1}^m k_j L(g_j),L(g_i)
	\right\rangle_{L^2(\Omega_T)}
	\right)_{i=1}^m=G(c).
	\end{matrix}
\end{align*}
Then $\phi=(B^*L^*LB)_2(h,0)+(B^*L^*LB)_2(0,k)$ can be equivalently expressed by $G(k)$\\$=(B^*L^*LB)_2(0,k)=\phi -(B^*L^*LB)_2(h,0)$ and (\ref{key23}) follows.
%		
%	Let us now show that the affine linear operator $(B^*L^*LB)_2(h,\cdot)$ is injective for all $h\in L^2(I)^m$. Consider $c_1\neq c_2\in \mathbb{R}^m$ and assume that $(B^*L^*LB)_2(h,c_1)=(B^*L^*LB)_2(h,c_2)$.
%%	
%	We have the following
%	\begin{align*}
%	\begin{matrix}
%	(B^*L^*LB)_2(h,c_1)=(B^*L^*LB)_2(0,c_1)+(B^*L^*LB)_2(h,0)\\ \\
%	= (B^*L^*LB)_2(0,c_2)+(B^*L^*LB)_2(h,0)=(B^*L^*LB)_2(h,c_2)
%	\end{matrix}
%	\end{align*}
%	Hence that $(B^*L^*LB)_2(0,c_1-c_2)= 0$. Further, we have
%	\begin{align*}
%	\begin{matrix}
%	(B^*L^*LB)_2(0,c_1-c_2)=(\langle L^*L((c_1-c_2)\cdot\overrightarrow{g}),g_i\rangle_{L^2(\Omega_T)})_{i=1}^m=\\ \\
%	\left(\left\langle \sum\limits_{j=1}^m(c_1-c_2)_jL(g_j),L(g_i)\right\rangle_{L^2(\Omega_T)}\right)_{i=1}^m= G (c_1-c_2)
%	\end{matrix}
%	\end{align*}
%	Hence, it follows that $G(c_1-c_2)=0$. Due to the invertiability of $G$ this implies that $c_1=c_2$, which is a contradiction.
%	
%	
%	
%	Let us now show the surjectivity of $(B^*L^*LB)_2(h,\cdot)$: Our aim is to prove that for any arbitrary $\phi\in \mathbb{R}^m$ there exists $c \in \mathbb{R}^m$ such that $
%	\phi=(B^*L^*LB)_2(h,c)
%$
%	holds. Consider the following equation,
%	 \begin{align*}
%	 \begin{matrix}
%	 \phi =(B^*L^*LB)_2(h,0)+(B^*L^*LB)_2(0,c)\\ \\
%	 \Rightarrow\\ \\
%	 G(c)=(B^*L^*LB)_2(0,c)=\phi - (B^*L^*LB)_2(h,0)=:\psi
%	 \end{matrix}
%	 \end{align*}
%	 which implies that $c=G^{-1}(\psi)$ is the vector we have searched for. Thus, the affine linear continuous operator $(B^*L^*LB)_2(h,\cdot)$ is bijective.
	\end{proof}
	In the following we present the injectivity results for the Newton derivative $DF_\gamma(v,c)$. The final surjectivity results and uniform boundedness of $DF_\gamma(v,c)^{-1}$ with respect to $\gamma \rightarrow 0$ and $\kappa> 0$ can be found in section \ref{SurjResultsSection}. Combined, these results will allow us to conclude, that the super linear convergence of Theorem \ref{superlin} holds for our control problem at least in the case $\kappa\neq 0$.
		\begin{theorem}\label{Theorem_injectivity_newtonDeriv}
		If $\gamma>0,\alpha_i>0$, $i=1,\cdots,m$, and $(v,c)\in L^2(I)^m\times \mathbb{R}^m$, the Newton derivative $DF_\gamma(v,c)$ is injective.
		\end{theorem}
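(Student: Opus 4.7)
My plan is to reduce the claim to a standard energy-type argument. Suppose $(h,k) \in L^2(I)^m \times \mathbb{R}^m$ satisfies $DF_\gamma(v,c)(h,k) = 0$. From the explicit expression (\ref{newProof}), this splits into
\begin{align*}
h(s) + \tfrac{1}{\gamma}\,\overline{\diag}(s)\cdot (B^*L^*LB)_1(h,k)(s) &= 0 \quad \text{in } L^2(I)^m,\\
\tfrac{\kappa(\gamma)}{\gamma}\,k + \tfrac{1}{\gamma}(B^*L^*LB)_2(h,k) &= 0 \quad \text{in } \mathbb{R}^m.
\end{align*}

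The crucial structural observation is that $\overline{\diag}(s)$ is a pointwise diagonal matrix with entries in $\{0,1\}$, and that the first equation forces $h_i(s)=0$ on the set $I_{0,i}$ where the $i$-th diagonal entry vanishes. Consequently $\overline{\diag}\,h = h$, and since $\overline{\diag}$ is self-adjoint, for any $\xi \in L^2(I)^m$ we have $\langle h, \overline{\diag}\,\xi\rangle_{L^2(I)^m} = \langle h, \xi\rangle_{L^2(I)^m}$. I would test the first equation with $h$ in $L^2(I)^m$, the second with $k$ in $\mathbb{R}^m$, and add the two identities. Using the observation above, this yields
\begin{align*}
\|h\|_{L^2(I)^m}^2 + \tfrac{\kappa(\gamma)}{\gamma}|k|_{\mathbb{R}^m}^2 + \tfrac{1}{\gamma}\bigl\langle (h,k),\,B^*L^*LB(h,k)\bigr\rangle_{L^2(I)^m\times\mathbb{R}^m} = 0.
\end{align*}
By Lemma \ref{key28} the third term equals $\tfrac{1}{\gamma}\|LB(h,k)\|_{L^2(\Omega_T)}^2 \geq 0$, so every summand must vanish.

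From the first summand $h = 0$ in $L^2(I)^m$, and from the third $LB(0,k) = 0$. By injectivity of $L$ (uniqueness of weak solutions of the wave equation, as used in Lemma \ref{key28}) this gives $B(0,k) = \sum_{j=1}^m k_j g_j = 0$ in $L^2(\Omega)$. Since the supports $w_j = \supp(g_j)$ are pairwise disjoint and each $g_j \not\equiv 0$, it follows that $k_j = 0$ for every $j$. Equivalently, one may invoke the invertibility of the Gram matrix $G$ from Lemma \ref{key28}, since $(B^*L^*LB)_2(0,k) = G(k)$. Either way, $(h,k) = 0$ and the injectivity is established.

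The main subtlety — and the only step that is not an entirely routine energy computation — is justifying the identity $\langle h, \overline{\diag}\,\xi\rangle = \langle h, \xi\rangle$; everything else follows by non-negativity of a sum of squares together with the already proved injectivity statements for $L$ and for the linear map $k \mapsto \sum_j k_j g_j$. Note that the argument works uniformly for $\kappa(\gamma) \geq 0$, so the result covers the $\kappa = 0$ case as well even though super-linear convergence is only claimed for $\kappa \neq 0$.
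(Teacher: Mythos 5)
Your proof is correct and follows essentially the same route as the paper: from the first equation you deduce that $h$ vanishes on the inactive sets $I_{0,i}$, pair the system with $(h,k)$, and use the non-negativity $\left\langle B^*L^*LB(h,k),(h,k)\right\rangle=\|LB(h,k)\|^2_{L^2(\Omega_T)}\geq 0$ together with the injectivity of $L$ (equivalently the invertibility of the Gram matrix $G$) to force $(h,k)=0$. The only, harmless, difference is that you argue directly and treat $\kappa=0$ and $\kappa>0$ in a single computation, whereas the paper runs the same energy argument by contradiction in two separate cases.
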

	\begin{proof}
Case $\kappa=0$: Let $0\neq(h,k) \in L^2(I)^m\times \mathbb{R}^m$ and assume that
	$DF_\gamma(v,c)(h,k)=0$. By the first line of $DF_\gamma(v,c)(h,k)$, see Lemma \ref{Case2Df}, we then have
		\begin{align}\label{hilfAnn}
		\begin{matrix}
						h_i+X_i
						\left(\frac{1}{\gamma}  (B^*L^*LB)_{1i}(h,k)\right)=0
		\end{matrix}
		\end{align}
		for all $i=1,\cdots,m$. In the set $I_{0,i}$ it holds that
		$
		0=h_i
		$ and in
		$I_{1,i}$ we have
		$
		-\gamma h_i
		=(B^*L^*LB)_{1i}(h,k).
		$
	%Further, assume for all $i=1,\cdots,m$ that $\vert I_{1,i}\vert \neq 0$.
	Furthermore, by the second row of $DF_\gamma(v,c)(h,k)$, see Lemma \ref{Case2Df}, we have
		$
		(B^*L^*LB)_{2}(h,k)=0.
		$
	Thus, we get by the positivity of $B^*L^*LB$ and (\ref{hilfAnn})
	\begin{align}\label{NoZeroSet}
	\begin{matrix}
	0\leq \left\langle B^*L^*LB(h,k),\begin{pmatrix}
	h\\k
	\end{pmatrix}\right\rangle_{L^2(I)^m\times \mathbb{R}^m}=
	-\gamma\sum\limits_{i=1}^m\int\limits_{I_{1,i}} h_i^2dt.
	\end{matrix}
	\end{align}
	This implies that $h= 0$ for all $i$. For $h=0$, we have that
$
	0=(B^*L^*LB)_{2}(0,k)
$ based on the second row of $DF_\gamma(v,c)(h,k)$.
	Because $(B^*L^*LB)_{2}(0,\cdot)$ is invertible, the kernel is ${0}$ and thus $k=0$, which is a contradiction. Hence $DF_\gamma(v,c)$ is injective.

	Case $\kappa> 0$: Let $(h,k)\neq 0 \in L^2(I)^m\times \mathbb{R}^m$ and assume that
			$DF_{\gamma,\kappa}(v,c)(h,k)=0$. By the first row of $DF_\gamma(v,c)(h,k)$, see Lemma \ref{Case2Df}, we then have
			\begin{align}
			\begin{matrix}
			h_i+X_i
			\left(\frac{1}{\gamma}  (B^*L^*LB)_{1i}(h,k)\right)=0
			\end{matrix}
			\end{align}
			for all $i=1,\cdots,m$. In the set $I_{0,i}$ we have
			$
			0=h_i,
			$
			and in $I_{1,i}$ we have
			$
			-\gamma h_i
			=(B^*L^*LB)_{1i}(h,k).
			$
			By the second row of $DF_\gamma(v,c)(h,k)$, see Lemma \ref{Case2Df}, we have
			$
			-\frac{\kappa(\gamma)}{\gamma} k=(B^*L^*LB)_{2}(h,k).
			$
			Thus, we get by the positivity of $B^*L^*LB$:
			\begin{align}\label{Contradiction_argu_kappa_no_0}
			\begin{matrix}
			0\leq \left\langle B^*L^*LB(h,k),\begin{pmatrix}
			h\\k
			\end{pmatrix}\right\rangle_{L^2(I)^m\times \mathbb{R}^m}= -\gamma\sum\limits_{i=1}^m\int\limits_{I_{1,i}} h_i^2dt -\frac{\kappa(\gamma)}{\gamma} \|k\|^2_{\mathbb{R}^m}
			\underbrace{<}_{(h,k)\neq 0}0,
			\end{matrix}
			\end{align}
			 which is a contradiction. Hence $DF_\gamma(v,c)$ is injective.
		\end{proof}
\subsection{Surjectivity Results for the Newton Derivative $DF_\gamma$}\label{surSection}
In this section we present surjectivity results for the Newton derivative $DF_{\gamma,\kappa}(v,c)$ as well as uniform boundedness of the operator family $\lbrace DF_{\gamma}(v,c)^{-1}\rbrace_{(v,c)\in L^2(I)^m\times \mathbb{R}^m}$.
\label{SurjResultsSection}
	\begin{theorem}\textcolor{white}{text}\\\label{surjectivity_proof_ofNewTon}
			For $\gamma,\kappa(\gamma)$, and $\alpha_i$, $i=1,\cdots,m$, all positive, the Newton derivative $DF_{\gamma
			}(v,c)$ is surjective for each $(v,c)\in L^2(I)^m\times \mathbb{R}^m$. Furthermore, the operator family $\lbrace DF_{\gamma}(v,c)^{-1}\rbrace_{(v,c)\in L^2(I)^m\times \mathbb{R}^m}$ is uniformly bounded for each fixed $\kappa> 0$ in $\mathfrak{L}(L^2(I)^m\times \mathbb{R}^m)$.
		\end{theorem}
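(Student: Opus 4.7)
The plan is to establish surjectivity by a Fredholm-alternative argument and the uniform norm bound by a direct energy estimate. I would begin by writing
\[
DF_\gamma(v,c) = A_\gamma + K_{(v,c)},\qquad A_\gamma := \begin{pmatrix}\id_{L^2(I)^m} & 0 \\ 0 & \tfrac{\kappa(\gamma)}{\gamma}\id_{\mathbb{R}^m}\end{pmatrix},\qquad K_{(v,c)} := \frac{1}{\gamma}\begin{pmatrix}\overline{\diag} & 0 \\ 0 & \id_{\mathbb{R}^m}\end{pmatrix} B^*L^*LB.
\]
Because $\kappa(\gamma)>0$, the block-diagonal $A_\gamma$ is invertible. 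Once $K_{(v,c)}$ is shown to be compact, the factorization $DF_\gamma(v,c) = A_\gamma(\id + A_\gamma^{-1}K_{(v,c)})$ reduces surjectivity to a Fredholm alternative for $\id + A_\gamma^{-1}K_{(v,c)}$, and injectivity of the latter is already supplied by Theorem \ref{Theorem_injectivity_newtonDeriv}.

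For compactness of $B^*L^*LB$ on $L^2(I)^m\times\mathbb{R}^m$ I would exploit the smoothing effect of the wave equation. By Theorem \ref{RegTheoremWave} and Lemma \ref{adjointLemma}, $L^*L$ maps $L^2(\Omega_T)$ continuously into $C(\bar I;V)\cap C^1(\bar I;H)$, so the scalar $t\mapsto \int_\Omega L^*L(B(h,k))(t,\cdot)g_i\,dx$ is continuous on $\bar I$. Integrating from $s$ to $T$, the map $(h,k)\mapsto (B^*L^*LB)_{1,i}(h,k)$ is bounded into $C^1(\bar I)$, which embeds compactly into $L^2(I)$. The second block $(B^*L^*LB)_2$ has finite-dimensional range $\mathbb{R}^m$ and is automatically compact. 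Multiplication by the bounded $\{0,1\}$-valued diagonal $\overline{\diag}$ preserves compactness, so $K_{(v,c)}$ is compact and surjectivity of $DF_\gamma(v,c)$ follows.

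The more delicate task is the uniform bound, because bare Fredholm theory would allow $\|DF_\gamma(v,c)^{-1}\|$ to depend on the $(v,c)$-dependent projection pattern in $\overline{\diag}$. I would argue directly, in the spirit of the injectivity proof. Setting $(\phi,\xi) := DF_\gamma(v,c)(h,k)$, the first block yields $h_i = \phi_i$ on $I_{0,i}$ and $\gamma h_i + (B^*L^*LB)_{1,i}(h,k) = \gamma\phi_i$ on $I_{1,i}$, while the second block reads $\kappa(\gamma)k + (B^*L^*LB)_2(h,k) = \gamma\xi$. Testing the first block against $X_i h_i$ (rather than $h_i$) and adding $k^T$ times the second, the mixed term reorganizes into $\langle B^*L^*LB(h,k),(h,k)\rangle - \sum_i\int_{I_{0,i}}\phi_i (B^*L^*LB)_{1,i}(h,k)\,dt$; the first quantity equals $\|LB(h,k)\|^2_{L^2(\Omega_T)}\geq 0$ and can be dropped, while the second is controlled by $\|B^*L^*LB\|\,\|\phi\|_{L^2(I)^m}\,\|(h,k)\|$. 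Absorbing this residual by Young's inequality, then combining with the trivial estimate $\|h|_{I_0}\|_{L^2}\leq\|\phi\|_{L^2}$, yields
\[
\|(h,k)\|^2_{L^2(I)^m\times\mathbb{R}^m}\leq C(\gamma,\kappa)\bigl(\|\phi\|^2_{L^2(I)^m} + |\xi|^2_{\mathbb{R}^m}\bigr)
\]
with a constant independent of $(v,c)$. The main obstacle is exactly this uniformity, and the trick of testing against $X_i h_i$ — which forces the $(v,c)$-dependent cross terms to reduce either to $\|\phi\|$-quantities or to the favourable sign $\|LB(h,k)\|^2\geq 0$ — is what makes it go through.
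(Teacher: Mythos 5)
Your proposal is correct, but the surjectivity half follows a genuinely different route from the paper. You obtain surjectivity abstractly: writing $DF_\gamma(v,c)=A_\gamma+K_{(v,c)}$ with $A_\gamma$ invertible (here $\kappa(\gamma)>0$ is essential) and $K_{(v,c)}$ compact — the compactness argument via the wave-regularity of $L^*L$, the resulting $C^1(\overline I)$-bound on $(B^*L^*LB)_{1,i}$ and the finite rank of the second block is sound — and then invoke the Fredholm alternative together with the injectivity already proved in Theorem \ref{Theorem_injectivity_newtonDeriv}. The paper instead proves surjectivity constructively: it fixes $h_i=\phi_{1,i}$ on $I_{0,i}$, reduces (\ref{13label27}) to a system on $\prod_{i}L^2(I_{1,i})\times\mathbb{R}^m$, and solves it by showing the reduced operator $W_1=\mathrm{diag}(\mathrm{id},\tfrac{\kappa(\gamma)}{\gamma}\mathrm{id})+\tfrac1\gamma W_2$ is positive definite. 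Your route is shorter and avoids the case distinctions over $|I_{1,i}|$, at the price of being purely qualitative, so the uniform bound must still be established separately — which you do, and there your argument is essentially the paper's mechanism in a different packaging: both exploit $h_i=\phi_{1,i}$ on $I_{0,i}$, test with the active-set part of $(h,k)$, use the non-negativity of $B^*L^*LB$ and the factor $\min(1,\kappa(\gamma)/\gamma)$; the paper first folds the inactive-set contributions into modified right-hand sides $\psi_1,\psi_2$ and inverts the reduced positive definite operator, while you keep them as a cross term over $I_{0,i}$ and absorb it by Young's inequality using $\|(h,k)\|\le\|\phi_1\|+\bigl(\sum_i\int_{I_{1,i}}h_i^2\,dt+|k|^2\bigr)^{1/2}$. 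Either way the constant depends only on $\gamma$, $\kappa(\gamma)$ and $\|B^*L^*LB\|$, not on $(v,c)$, which is exactly the uniformity claimed in the theorem.
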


	\begin{proof}\textcolor{white}{text}

	(i) Surjectivity:
		In the following consider $(v,c)$ and $(\phi_1,\phi_2)\in L^2(I)^m\times \mathbb{R}^m$. We have to show that there exists a $(h,k)\in L^2(I)^m\times \mathbb{R}^m$ such that
		\begin{align}\label{13label27}
			\begin{matrix}
			DF_{\gamma}(v,c)\begin{pmatrix}
			h\\ k
			\end{pmatrix}=\begin{pmatrix}
			\phi_1\\\phi_2
			\end{pmatrix}.
			\end{matrix}
		\end{align}
		In view of (\ref{13label27}) and (\ref{newProof}) we have $h_i=\phi_{1,i}$ for $i=1,\cdots,m$ in $I_{0,i}$. This implies that $h_i=\phi_{1,i} \mathbf{1}_{I_{0,i}}+\tilde{h}_i\mathbf{1}_{I_{1,i}}$ for some $\tilde{h}_i$, $i=1,\cdots,m$. If it holds that $|I_{1,i}|=0$, for all $i=1,\cdots,m$, we get that $h_i=\phi_{1,i}$ and by (\ref{13label27}) we have
		\begin{align}\label{13label277}
		\begin{matrix}
		\phi_2-\frac{1}{\gamma}(B^*L^*LB)_2(\phi_1,0)=\frac{\kappa(\gamma)}{\gamma} k + (B^*L^*LB)_2(0,k)=:\tilde{W}(k) \text{ with }\tilde{W}\in \mathbb{R}^{m\times m}.
		\end{matrix}
		\end{align}
		Since, $k\mapsto (B^*L^*LB)_2(0,k)$ is self-adjoint and positive definite from $\mathbb{R}^m$ to itself, there exists $k\in \mathbb{R}^m$ which solves (\ref{13label277}).

		Next, w.l.o.g. let $|I_{1,i}|,\cdots,|I_{1,\tilde{n}}|>0$ and $|I_{1,\tilde{n}+1}|,\cdots,|I_{1,m}|=0$ with $\tilde{n}>0$. By (\ref{13label27}), in $I_{1,i}$, $i=1,\cdots, \tilde{n}$, we require
				\begin{align}\label{15label27}
				\begin{matrix}
				\phi_{1,i}=\tilde{h}_i\mathbf{1}_{I_{1,i}}+\frac{1}{\gamma}(B^*L^*LB)_{1,i}(h,k)\\ \\
				\Leftrightarrow\\ \\
				\phi_{1,i}\mathbf{1}_{I_{1,i}}-\frac{1}{\gamma}(B^*L^*LB)_{1,i}(\left(\phi_{1,i} \mathbf{1}_{I_{0,i}}\right)_{i=1}^m,0)\mathbf{1}_{I_{1,i}}=\tilde{h}_i\mathbf{1}_{I_{1,i}}+\frac{1}{\gamma}(B^*L^*LB)_{1,i}((\tilde{h}_i\mathbf{1}_{I_{1,i}})_{i=1}^m,k)\mathbf{1}_{I_{1,i}}.
				\end{matrix}
				\end{align}
where in the last step we used $h_i=\phi_{1,i}\mathbf{1}_{I_{1,i}}+\tilde{h}_i\mathbf{1}_{I_{0,i}}$.
Note that $\tilde{h}_i=0$ for $i>\tilde{n}$ holds. By the second equation in (\ref{13label27}) we have to fulfill the equation
\begin{align}\label{15label277}
\begin{matrix}
\phi_2-\frac{1}{\gamma}(B^*L^*LB)_2((\phi_{1,i}\mathbf{1}_{I_{1,i}})_{i=1}^m,0)=\frac{\kappa(\gamma)}{\gamma}k+\frac{1}{\gamma}(B^*L^*LB)_2(((\tilde{h}\mathbf{1}_{I_{1,i}})_{i=1}^{\tilde{n}},0_{m-\tilde{n}}),k).
\end{matrix}
\end{align}
with $\vert I_{0,i}\vert=|I|$ for $i=\tilde{n}+1,\cdots,m$.
We will use the solution of (\ref{15label27}) and (\ref{15label277}) to obtain the solution for (\ref{13label27}).
%		In $I_{0,i}$, for $i=1,\cdots,m$, we need
%$
%		\phi_{1,i}=h_i.
%$
%		Hence, let us define $h_i=\phi_{1,i} \mathbf{1}_{I_{0,i}}$ for $i=1,\cdots,m$ in $I_{0,i}$. This implies that $h_i=\phi_{1,i} \mathbf{1}_{I_{0,i}} + \tilde{h}_i \mathbf{1}_{I_{1,i}}$, where we still have to find the unknowns $\tilde{h}_i$ and $k$.
Let us consider in the following the linear continuous and self-adjoint operator
		\begin{align*}
		\begin{matrix}
		W_1=\begin{pmatrix}
		id_{\prod\limits_{i=1}^{\tilde{n}}L^2(I_{1,i})} & 0\\ \\
		0 & \frac{\kappa(\gamma)}{\gamma}id_{\mathbb{R}^m}
		\end{pmatrix}+\frac{1}{\gamma}\underbrace{\begin{pmatrix}
		\left[B^*L^*LB_1(\left((\cdot \mathbf{1}_{I_{1,i}}\right)_{i=1}^{\tilde{n}},0_{m-{\tilde{n}}}),\cdot)\right]_{i=1}^{\tilde{n}}
		\\B^*L^*LB_2(\left((\cdot \mathbf{1}_{I_{1,i}}\right)_{i=1}^{\tilde{n}},0_{m-{\tilde{n}}}),\cdot)
		\end{pmatrix}}_{:=W_2}\\ \\
		\in \mathfrak{L}\left(\prod\limits_{i=1}^{\tilde{n}}L^2(I_{1,i})\times \mathbb{R}^m, \prod\limits_{i=1}^{\tilde{n}}L^2(I_{1,i})\times \mathbb{R}^m\right) \text{ resulting in }\\ \\
		W_1\begin{pmatrix}
		a\\ b
		\end{pmatrix}= \begin{pmatrix}
		a\\\frac{\kappa(\gamma)}{\gamma}b
		\end{pmatrix}+\frac{1}{\gamma}\begin{pmatrix}
		\left[B^*L^*LB_1(\left((a_i \mathbf{1}_{I_{1,i}}\right)_{i=1}^{\tilde{n}},0_{m-{\tilde{n}}}),b)\right]_{i=1}^{\tilde{n}}\\
		B^*L^*LB_2(\left((a_i \mathbf{1}_{I_{1,i}}\right)_{i=1}^{\tilde{n}},0_{m-{\tilde{n}}}),b)
		\end{pmatrix}
		\end{matrix}
		\end{align*}
		Since $W_2$ is non-negative, we conclude that $W_1$ is positive definite and hence invertible.
This implies that there exists a $(\tilde{h},k)\in \prod\limits_{i=1}^{\tilde{n}}L^2(I_{1,i}) \times \mathbb{R}^m$ such that (\ref{15label27}) and (\ref{15label277}) holds true. Defining $h_i=\phi_{1,i}\mathbf{1}_{I_{0,i}}+\tilde{h}_i\mathbf{1}_{I_{1,i}}$, for $i=1,\cdots,\tilde{n}$, and $h_i=\phi_{1,i}$ with $\tilde{h}_i=0$, for $i=\tilde{n}+1,\cdots,m$ provides the desired solution $(h,k)\in L^2(I)^m\times\mathbb{R}^m$ for (\ref{13label27}): In fact, for $i=\tilde{n}+1,\cdots,m$ we have $h_i=\phi_{1,i}$ a.e. in $I$. In $I_{0,i}$, $i=1,\cdots,m$, holds $h_i=\phi_{1,i}$. Furthermore, we have in $I_{1,i}$
\begin{align*}
\begin{matrix}
h_i + \frac{1}{\gamma} (B^*L^*LB)_{1,i}(h,k)=\begin{Bmatrix} \tilde{h}_i\mathbf{1}_{I_{1,i}} +\frac{1}{\gamma}(B^*L^*LB)_{1,i}(\left(\phi_{1,i} \mathbf{1}_{I_{0,i}}\right)_{i=1}^m,0)\mathbf{1}_{I_{1,i}}\\+\frac{1}{\gamma}(B^*L^*LB)_{1,i}((\tilde{h}_i\mathbf{1}_{I_{1,i}})_{i=1}^m,k)\mathbf{1}_{I_{1,i}}\end{Bmatrix}
\underbrace{=}_{(\ref{15label27})}\phi_{1,i} \mathbf{1}_{I_{1,i}}
\end{matrix}
\end{align*}
for $i=1,\cdots,\tilde{n}$. With (\ref{15label277}) we finally have (\ref{13label27}). Hence, we have proved the surjectivity of $DF_\gamma(v,c)$.

(i) Boundedness:
Let us now show that the operator family $\lbrace DF_{\gamma}(v,c)^{-1}\rbrace_{v,c}$ is uniformly bounded for fixed $\kappa >0$.
%By Theorem \ref{Theorem_injectivity_newtonDeriv} we have that $ DF_{\gamma}(v,c)$ is injective and thus invertible for all $(v,c)\in L^2(I)^m\times \mathbb{R}^m$.
%
%
%
%In $I_{0,i}$, for $i=1,\cdots,m$, holds
%$
%		h_{i}=\phi_{1,i}\mathbf{1}_{I_{0,i}}+\tilde{h}_i\mathbf{1}_{I_{1,i}}=\phi_{1,i}.
%$
%		Furthermore, we have in $I_{1,i}$,
%		\begin{align*}
%		\begin{matrix}
%		h_i+\frac{1}{\gamma}(B^*L^*LB)_1(h,k)=\tilde{h}_i\mathbf{1}_{I_{1,i}}+\frac{1}{\gamma}(B^*L^*LB)_1((\tilde{h}_i\mathbf{1}_{I_{1,i}})_{i=1}^m,k)\mathbf{1}_{I_{1,i}}+\frac{1}{\gamma}(B^*L^*LB)_1((\phi_{1,i}\mathbf{1}_{I_{1,i}})_{i=1}^m,0)\mathbf{1}_{I_{1,i}}\\ \\
%		\underbrace{=}_{(\ref{15label27})}\phi_{1,i}\mathbf{1}_{I_{1,i}}-\frac{1}{\gamma}(B^*L^*LB)_1(\left(\phi_{1,i} \mathbf{1}_{I_{0,i}}\right)_{i=1}^m,0)\mathbf{1}_{1,i}+\frac{1}{\gamma}(B^*L^*LB)_1((\phi_{1,i}\mathbf{1}_{I_{1,i}})_{i=1}^m,0)\mathbf{1}_{I_{1,i}}=\phi_{1,i}
%		\end{matrix}
%		\end{align*}
%		for $i=1,\cdots,m$.
Let us consider $(\phi_1,\phi_2)^T\in L^2(I)^m\times\mathbb{R}^m$. By the surjectivity and injectivity of $DF_\gamma(v,c)$, there exists a unique $(h,k)\in L^2(I)^m\times\mathbb{R}^m$ with $h_i=\phi_1\mathbf{1}_{I_{1,i}}+\tilde{h}\mathbf{1}_{I_{1,i}}$, as we used above, such that
\begin{align}\label{tilde_M_marker}
\begin{matrix}
\begin{pmatrix}
\psi_{1}\\ \psi_2
\end{pmatrix}
 = \begin{pmatrix}
\left(\tilde{h}_i\mathbf{1}_{I_{1i}}\right)_{i=1}^m \\
\frac{\kappa(\gamma)}{\gamma}k
\end{pmatrix}
+\frac{1}{\gamma}\begin{pmatrix}
\left(B^*L^*LB_{1i}((\tilde{h}_j\mathbf{1}_{I_{1j}})_{j=1}^m,k)\cdot\mathbf{1}_{I_{1i}}\right)_{i=1}^m\\
B^*L^*LB_{2}((\tilde{h}_j\mathbf{1}_{I_{1j}})_{j=1}^m,k)
\end{pmatrix}
\end{matrix}
\end{align}
with
$
\psi_1:=\left(\phi_{1i}\mathbf{1}_{I_{1i}}-\frac{1}{\gamma}B^*L^*LB_{1i}((\phi_{1j}\mathbf{1}_{I_{0j}})_{j=1}^m,0)\cdot\mathbf{1}_{I_{1i}}\right)_{i=1}^m$ and $
\psi_2$\\
$:=\phi_2-\frac{1}{\gamma}B^*L^*LB_{2}((\phi_{1j}\mathbf{1}_{I_{0j}})_{j=1}^m,0)$, compare (\ref{15label27}), and (\ref{15label277}).
 Similarly as before, assume at first that $|I_{1,1}|,\cdots,|I_{1,m}|=0$. We have $h=\phi_1$ and
\begin{align}\label{Tilde_star_far_23}
\begin{matrix}
\psi_2=\frac{\kappa(\gamma)}{\gamma}k-\frac{1}{\gamma}B^*L^*LB_{2}(0,k)=\tilde{W}(k)
\end{matrix}
\end{align}
by (\ref{13label27}), (\ref{13label277}). Recall that $\tilde{W}$ is a self-adjoint, and positive definite. Using $\tilde{W}^{-1}$ on both sides of (\ref{Tilde_star_far_23}) gives us the following:
\begin{align}\label{Thee_ineq_23}
\begin{matrix}
\|k\|_{\mathbb{R}^m}=\|\tilde{W}^{-1}(\psi_2)\|\underbrace{\leq}_{(*)} \|\tilde{W}^{-1}\| \left[
\|\phi_2\|_{\mathbb{R}^m}+\frac{1}{\gamma}\|B^*L^*LB\|\|(\phi_1,0)\|_{L^2(I)^m\times \mathbb{R}^m}
\right]\\ \\
\leq 2 \|\tilde{W}^{-1}\|  \max\left(
1,\frac{1}{\gamma}|B^*L^*LB\|
\right)\|(\phi_1,\phi_2)\|_{L^2(I)^m\times \mathbb{R}^m}
\end{matrix}
\end{align}
where in $(*)$ we used that $\begin{matrix}
\|(B^*L^*LB)_2((\phi_{1j}\mathbf{1}_{I_{0j}})_{j=1}^m,0)\|_{\mathbb{R}^m}
\leq \|B^*L^*LB((\phi_{1j}\mathbf{1}_{I_{0j}})_{j=1}^m,0)\|_{L^2(I)^m\times \mathbb{R}^m}
\end{matrix}$. Note that $\tilde{W}$ is independent of $(v,c)$, $(h,k)$, and $(\phi_1,\phi_2)$. Furthermore, we have the following:
\begin{align*}
\begin{matrix}
	\left\| \begin{pmatrix}
	h\\k
	\end{pmatrix} \right\|_{L^2(I)^m\times\mathbb{R}^m}^2 \underbrace{=}_{h=\phi_1}
	\left\langle
	\begin{pmatrix}
	\phi_1\\ k
	\end{pmatrix},	
	\begin{pmatrix}
	\phi_1\\ k
	\end{pmatrix}
	\right\rangle_{L^2(I)^m\times\mathbb{R}^m}
	\leq \|(\phi_1,\phi_2)\|_{L^2(I)^m\times \mathbb{R}^m}^2 +\|k\|_{\mathbb{R}^m}^2
	\\ \\ \underbrace{\leq}_{(\ref{Thee_ineq_23})}
	\|(\phi_1,\phi_2)\|_{L^2(I)^m\times \mathbb{R}^m}^2 +4\|\tilde{W}^{-1}\|^2\max(1,\frac{1}{\gamma} \|B^*L^*LB\|)^2
	\|(\phi_1,\phi_2)\|_{L^2(I)^m\times\mathbb{R}^m}^2\\
	\leq \tilde{\tilde{c}}\|(\phi_1,\phi_2)\|_{L^2(I)^m\times\mathbb{R}^m}^2
\end{matrix}
\end{align*}
with $\tilde{\tilde{c}}>0$ independent of $(v,c)$, $(h,k)$, and $(\phi_1,\phi_2)$. Hence, we have
\begin{align*}
	\begin{matrix}
	\|DF_\gamma(v,c)^{-1}\|
	=\sup\limits_{\begin{Bmatrix}\|(\phi_1,\phi_2)\|_{L^2(I)^m\times\mathbb{R}^m}\leq 1\\
	(\phi_1,\phi_2)^T=DF_\gamma(v,c)(h,k)	
	\end{Bmatrix}}\|(h,k)\|_{L^2(I)^m\times\mathbb{R}^m}\leq \tilde{\tilde{c}} <\infty.
	\end{matrix}
\end{align*}
Next assume again that $|I_{1,1}|,\cdots,|I_{1,\tilde{n}}|>0$, and $|I_{1,\tilde{n}+1}|,\cdots,|I_{1,m}|=0$ with $\tilde{n}>0$. Let us define $\tilde{\tilde{h}}:=\left(\tilde{h}_j \mathbf{1}_{I_{1j}}\right)_{j=1}^m\in L^2(I)^m$, $\mathcal{N}:=\lbrace 1,\cdots,\tilde{n}\rbrace$,
 $\prod\limits_{\mathcal{N}}:=\prod\limits_{i=1}^{\tilde{n}}L^2(I_{1,i})$,
 and $\tilde{h}_{\mathcal{N}}:=\left(\tilde{h}_j \mathbf{1}_{I_{1j}}\right)_{j=1}^{\tilde{n}}\in \prod\limits_{\mathcal{N}}$. We have $h_i=\phi_{1,i}$ for all $i=\tilde{n}+1,\cdots,m$ and
\begin{align}\label{3_vertical_1_horizontal_hash_sign}
\begin{matrix}
\psi_2=\frac{\kappa(\gamma)}{\gamma}k-\frac{1}{\gamma}B^*L^*LB_{2}(\tilde{\tilde{h}},k).
\end{matrix}
\end{align}
By (\ref{tilde_M_marker}) we have
\begin{align}\label{3_horizontal_2_vertical_hash_sign}
\begin{matrix}
\left\langle
\begin{pmatrix}
\left(
\psi_{1i}
\right)_{i\in \mathcal{N}}\\
\psi_2
\end{pmatrix},\begin{pmatrix}
\tilde{h}_{\mathcal{N}}\\
k
\end{pmatrix}
\right\rangle_{\prod\limits_{\mathcal{N}}\times \mathbb{R}^m}=\begin{Bmatrix*}[l]
\left\langle
\begin{pmatrix}
\tilde{h}_{\mathcal{N}}\\
\frac{\kappa(\gamma)}{\gamma}k
\end{pmatrix},\begin{pmatrix}
\tilde{h}_{\mathcal{N}}\\
k
\end{pmatrix}
\right\rangle_{\prod\limits_{\mathcal{N}}\times \mathbb{R}^m}\\ \\
+\frac{1}{\gamma}
\left\langle
\begin{pmatrix}
\left(
B^*L^*LB_{1i}(\tilde{\tilde{h}},k)\mathbf{1}_{I_{1i}}
\right)_{i\in \mathcal{N}}\\
(B^*L^*LB)_2(\tilde{\tilde{h}},k)
\end{pmatrix},\begin{pmatrix}
\tilde{h}_{\mathcal{N}}\\
k
\end{pmatrix}
\right\rangle_{\prod\limits_{\mathcal{N}}\times \mathbb{R}^m}
\end{Bmatrix*}.
\end{matrix}
\end{align}
This equation implies the following:
\begin{align*}
\begin{matrix}
\|(\psi_1,\psi_2)\|_{L^2(I)^m\times \mathbb{R}^m}\|(\tilde{h}_{\mathcal{N}},k)\|_{\prod\limits_{\mathcal{N}}\times\mathbb{R}^m}\geq \|\left((\psi_{1i})_{i\in \mathcal{N}},\psi_2\right)\|_{\prod\limits_{\mathcal{N}}\times\mathbb{R}^m}\|(\tilde{h}_{\mathcal{N}},k)\|_{\prod\limits_{\mathcal{N}}\times\mathbb{R}^m}\\ \\
\geq \left\langle
\begin{pmatrix}
\left(
\psi_{1i}
\right)_{i\in \mathcal{N}}\\
\psi_2
\end{pmatrix},\begin{pmatrix}
\tilde{h}_{\mathcal{N}}\\
k
\end{pmatrix}
\right\rangle_{\prod\limits_{\mathcal{N}}\times \mathbb{R}^m}\underbrace{\geq}_{(\ref{3_horizontal_2_vertical_hash_sign})}
\begin{Bmatrix*}[l]
\min\left(
1,\frac{\kappa(\gamma)}{\gamma}
\right)\|(\tilde{h}_{\mathcal{N}},k)\|_{\prod\limits_{\mathcal{N}}\times\mathbb{R}^m}^2\\ \\
+\frac{1}{\gamma}
\left\langle
\begin{pmatrix}
\left(
B^*L^*LB_{1i}(\tilde{\tilde{h}},k)\mathbf{1}_{I_{1i}}
\right)_{i\in \mathcal{N}}\\
(B^*L^*LB)_2(\tilde{\tilde{h}},k)
\end{pmatrix},\begin{pmatrix}
\tilde{h}_{\mathcal{N}}\\
k
\end{pmatrix}
\right\rangle_{\prod\limits_{\mathcal{N}}\times \mathbb{R}^m}
\end{Bmatrix*}\\ \\
\underbrace{\geq}_{(**)}\min\left(
1,\frac{\kappa(\gamma)}{\gamma}
\right)\|(\tilde{h}_{\mathcal{N}},k)\|_{\prod\limits_{\mathcal{N}}\times\mathbb{R}^m}^2
\end{matrix}
\end{align*}
where (**) follows by the non-negativity of $B^*L^*LB$, i.e.
\begin{align*}
\begin{matrix}
\frac{1}{\gamma}
\left\langle
\begin{pmatrix}
\left(
B^*L^*LB_{1i}(\tilde{\tilde{h}},k)\mathbf{1}_{I_{1i}}
\right)_{i\in \mathcal{N}}\\
(B^*L^*LB)_2(\tilde{\tilde{h}},k)
\end{pmatrix},\begin{pmatrix}
\tilde{h}_{\mathcal{N}}\\
k
\end{pmatrix}
\right\rangle_{\prod\limits_{\mathcal{N}}\times \mathbb{R}^m}=\frac{1}{\gamma}
\left\langle
B^*L^*LB(\tilde{\tilde{h}},k)
,\begin{pmatrix}
\tilde{\tilde{h}}\\
k
\end{pmatrix}
\right\rangle_{L^2(I)^m\times \mathbb{R}^m}\geq 0,
\end{matrix}
\end{align*}
where we used that $\tilde{\tilde{h}}_j=\tilde{h}_j\mathbf{1}_{I_{1,j}}=\tilde{h}_{\mathcal{N},j}$ for $j=1,\cdots,\tilde{n}$ and $\tilde{\tilde{h}}_j=0$ for $j>\tilde{n}$. Hence, we have
\begin{align}\label{Another_impor_ineq232}
\begin{matrix}
\frac{1}{\min\left(
1,\frac{\kappa(\gamma)}{\gamma}
\right)}\|(\psi_1,\psi_2)\|_{L^2(I)^m\times \mathbb{R}^m}\geq \|(\tilde{h}_{\mathcal{N}},k)\|_{\prod\limits_{\mathcal{N}}\times\mathbb{R}^m}.
\end{matrix}
\end{align}
Finally, we have by
(\ref{Another_impor_ineq232}) and the definition of $\psi_i$
\begin{align*}
\begin{matrix}
\|(h,k)\|_{L^2(I)^m\times \mathbb{R}^m}^2=
\|((\phi_{1j}\mathbf{1}_{I_{0j}})_{i=1}^m+\tilde{\tilde{h}},k)\|_{L^2(I)^m\times \mathbb{R}^m}^2\\ \\
=\left\langle
\begin{pmatrix}
(\phi_{1j}\mathbf{1}_{I_{0j}})_{i=1}^m+\tilde{\tilde{h}} \\ k
\end{pmatrix},
\begin{pmatrix}
(\phi_{1j}\mathbf{1}_{I_{0j}})_{i=1}^m+\tilde{\tilde{h}} \\ k
\end{pmatrix}
\right\rangle_{L^2(I)^m\times \mathbb{R}^m}\\ \\
\underbrace{=}_{\mathbf{1}_{I_{1i}}\cdot\mathbf{1}_{I_{0i}}=0} \|(\phi_{1j}\mathbf{1}_{I_{0j}})_{i=1}^m\|_{L^2(I)^m}^2+\|(\tilde{\tilde{h}},k)\|_{L^2(I)^m\times \mathbb{R}^m}^2
\leq \|(\phi_1,\phi_2)\|_{L^2(I)^m\times \mathbb{R}^m}^2+\|(\tilde{h}_{\mathcal{N}},k)\|_{\prod\limits_{\mathcal{N}}\times \mathbb{R}^m}^2\\ \\
\underbrace{\leq}_{(\ref{Another_impor_ineq232})}\|(\phi_1,\phi_2)\|_{L^2(I)^m\times \mathbb{R}^m}^2 + \frac{1}{\min\left(
1,\frac{\kappa(\gamma)}{\gamma}
\right)^2}\|(\psi_1,\psi_2)\|_{L^2(I)^m\times \mathbb{R}^m}^2\leq \tilde{\tilde{c}}^2\|(\phi_1,\phi_2)\|_{L^2(I)^m\times \mathbb{R}^m}^2
\end{matrix}
\end{align*}
where $\tilde{\tilde{c}}>0$ is some constant independent of $(v,c)$, and $(h,k)$. This finally concludes the boundedness of $\|DF_\gamma(v,c)^{-1}\|\leq \tilde{\tilde{c}}$.
\end{proof}

As a consequence of Theorem \ref{superlin} - \ref{surjectivity_proof_ofNewTon} we have the following result.
\begin{corollary}
	If $\gamma$, $\kappa(\gamma)$, and $i\in\lbrace 1,\cdots,m\rbrace$ are all positive, then the semi-smooth Newton algorithm 	
	\begin{align}
	\begin{matrix}
	(v^{k+1},c^{k+1})=(v^k,c^k)-DF_{\gamma}(v^k,c^k)^{-1}F_\gamma(v^k,c^k), 
	\end{matrix}
	\end{align}
	converges super-linearly to the optimal solution $(\overrightarrow{v},\overrightarrow{c})$ of $(\tilde{P}_\gamma)$, provided that\\ $\|(v^0,c^0) - (\overrightarrow{v},\overrightarrow{c})\|_{L^2(I)^m\times\mathbb{R}^m}$ is sufficiently small.
\end{corollary}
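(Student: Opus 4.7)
The plan is to assemble the corollary as an immediate instance of the abstract super-linear convergence result (Theorem \ref{superlin}) applied to $G = F_\gamma$ with $X = Y = L^2(I)^m \times \mathbb{R}^m$ and $x^* = (\overrightarrow{v},\overrightarrow{c})$. All the needed pieces have been established in the preceding subsections, so the task is really to check the hypothesis list of Theorem \ref{superlin} one-by-one and invoke it.

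First I would record that $x^* = (\overrightarrow{v},\overrightarrow{c})$ satisfies $F_\gamma(x^*) = 0$: this is the equivalence noted right after the definition of $F_\gamma$, namely $F_\gamma(\overrightarrow{v},\overrightarrow{c}) = 0 \iff$ (1)-(2) in Lemma \ref{maxminFormula1}, which are the necessary and sufficient first-order conditions for $(\tilde{P}_\gamma)$. Next, continuity of $F_\gamma$ on $L^2(I)^m \times \mathbb{R}^m$ is immediate from the continuity of $S$, $L^*$, $B^*$ and of the proximal mapping (point-wise $\max/\min$) on $L^2(I)^m$, so the operator fits the framework of Theorem \ref{superlin}.

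Next I would invoke Lemma \ref{key1245}, which identifies $DF_\gamma$ as a generalized derivative of $F_\gamma$ and asserts that $F_\gamma$ is $\partial F_\gamma$-semi-smooth at every point of $L^2(I)^m \times \mathbb{R}^m$; in particular on some open neighborhood $U$ of $x^*$. Combined with Theorem \ref{Theorem_injectivity_newtonDeriv} (injectivity) and Theorem \ref{surjectivity_proof_ofNewTon} (surjectivity plus uniform boundedness of the inverses for fixed $\kappa > 0$), one obtains that every $M \in \partial F_\gamma(x)$ is a bounded linear bijection of $L^2(I)^m \times \mathbb{R}^m$ and that $\sup_{x \in U} \|DF_\gamma(x)^{-1}\|_{\mathfrak{L}(L^2(I)^m \times \mathbb{R}^m)} < \infty$. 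Here one should stress that the positivity of $\kappa(\gamma)$ is crucial: it is exactly what enters the inequality (\ref{Contradiction_argu_kappa_no_0}) for injectivity and the factor $\min(1,\kappa(\gamma)/\gamma)$ governing the uniform bound in Theorem \ref{surjectivity_proof_ofNewTon}.

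With semi-smoothness at $x^*$, non-singularity of every generalized differential on a neighborhood of $x^*$, and uniform boundedness of the inverses in that neighborhood all verified, the hypotheses of Theorem \ref{superlin} are met; applying it with $G = F_\gamma$ yields super-linear convergence of the iteration $(v^{k+1},c^{k+1}) = (v^k,c^k) - DF_\gamma(v^k,c^k)^{-1} F_\gamma(v^k,c^k)$ to $(\overrightarrow{v},\overrightarrow{c})$, provided the initial guess $(v^0,c^0)$ lies sufficiently close to $(\overrightarrow{v},\overrightarrow{c})$ in $L^2(I)^m \times \mathbb{R}^m$. There is no genuine obstacle at this stage, since all the hard analytical work (norm-gap semi-smoothness of the prox operator, spectral properties of $B^*L^*LB$, construction of the preimage in the surjectivity proof, and the $\kappa$-dependent bound on $\|DF_\gamma(v,c)^{-1}\|$) has been carried out earlier; the only point that deserves an explicit sentence is that the bound from Theorem \ref{surjectivity_proof_ofNewTon} is uniform in $(v,c)$ and hence in particular bounded on any neighborhood $U$ of $x^*$, which is precisely what Theorem \ref{superlin} requires.
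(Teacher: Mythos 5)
Your proposal is correct and follows exactly the route the paper intends: the paper gives no separate proof of this corollary but states it ``as a consequence of Theorem \ref{superlin} -- \ref{surjectivity_proof_ofNewTon}'', having already announced before Theorem \ref{Theorem_injectivity_newtonDeriv} that all hypotheses of Theorem \ref{superlin} are to be verified for $G=F_\gamma$, $\kappa\neq 0$, $x^*=(\overrightarrow{v},\overrightarrow{c})$. Your checklist (equivalence $F_\gamma(x^*)=0$ with Lemma \ref{maxminFormula1}, semi-smoothness from Lemma \ref{key1245}, bijectivity from Theorems \ref{Theorem_injectivity_newtonDeriv} and \ref{surjectivity_proof_ofNewTon}, and the uniform bound on the inverses) is precisely the combination the paper relies on.
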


\section{Numerics and Examples}
\label{numericsection}
	In the following sections we present numerical results which illustrate the effect of $BV$ cost on the optimal controls. For the discretization of $(\mathcal{W})$ we used the 3-level finite element method presented in \cite{[Zlot]}. In particular, we used the Crank-Nicholson method with linear continuous finite elements in time ($S_\tau$) and space ($S_h$). The resulting discrete solution of $(\mathcal{W})$ is an element in the tensor space $S_\tau \otimes S_h $.

	We discretized the control $(v,c)\in L^2(I)^m\times \mathbb{R}^m$ in $(\tilde{P}_\gamma)$ by $S_\tau$ elements. Furthermore, we used the trapezoidal rule to evaluate all time-depending integrals in problem $(\tilde{P}_\gamma)$. The trapezoidal rule guarantees that the  function inside the prox operator (see (\ref{proxerop})) attains its maximum and minimum in the time nodes we considered for $S_\tau$. We used the mass matrix for the space depending integral in $(\tilde{P}_\gamma)$ with respect to the finite elements in $S_h$. Further details can be found in \cite{PhDEngel}.

In the following sections, we construct two test cases in such a manner that exact analytic solutions for $(P)$, respectively $(\tilde{P})$, become available. We use Algorithm \ref{Algo1}, which is a BV-path following algorithm, to approximate numerically the solutions of those examples. The solution of the linear system in Algorithm \ref{Algo1} is approximated by a Krylov iterative method.

A similar path following algorithm is used in the semi-linear parabolic case in \cite{[KCK]}.
\begin{algorithm}
	Input: $(v_0,c_0)\in L^2(I)^m\times\mathbb{R}^m$, $\gamma_0>0$, $TOL_{\gamma}>0$, $TOL_{N}>0$, $k=0$ and $\nu\in (0,1)$\\
	\While{$\gamma_k>TOL_{\gamma}$}{
		Set $i=0$, $(v_k^i,c_k^i)=(v_k,c_k)$\\
		\While{$\|F_{\gamma_k}(v_k^i,c_k^i)\|_{L^2(I)^m\times\mathbb{R}^m}>TOL_{N}$}{
			Solve $DF_{\gamma_k}(v_k^i,c_k^i)(\delta v,\delta c)=-F_{\gamma_k}(v_k,c_k)$, set $(v_{k+1}^{i+1},c_{k+1}^{i+1})=(v_k^i,c_k^i) + (\delta v,\delta c)$; $i=i+1$.
		}
		Define $(v_{k+1},c_{k+1})=(v_k^i,c_k^i)$ and $\gamma_{k+1}=\nu \gamma_k$; set $k=k+1$.
	}
	\caption{BV-path following algorithm.}\label{Algo1}
\end{algorithm}
A special aspect about the semi-smooth Newton method inside the BV-Path-Following algorithm compared to the one in \cite{[KCK]} is that we consider the derivative and an additional constant as control instead of a BV function. Besides, we have an additional term $\frac{\kappa(\gamma)}{\gamma}c$, which allows us to obtain super-linear convergence for the semi-smooth Newton algorithm for $\kappa\neq 0$, see section \ref{SuperLinSection} and \ref{surSection}.
%	a similar algorithm is used in the semi-linear parabolic case in \cite{[KCK]}.

\subsection{Construction of Test Examples}\label{GeneralExample}
This example is constructed in such a way that for the optimal adjoint state a wide range of different sets $\lbrace t\in I \vert \vert p_1(t)\vert=\alpha\rbrace$ are possible. For the construction of test examples we consider $\Omega\subset \mathbb{R}^d$, $d\in\lbrace 1,2,3\rbrace$, $I=(0,T)$ with $T<\infty$, $m\in \mathbb{N}_{>0}$, $\alpha_i>0$, and $g_i\in L^\infty(\Omega)\setminus \lbrace 0 \rbrace$ with disjoint supports $\supp(g_i)=w_i$ for $i=1,\cdots,m$.

$\textbf{Adjoint State:}$
 Choose a function $f\in H^2(\Omega)\cap V$ with $\int\limits_{\Omega}f\cdot g_i dx\neq 0$ for all $i=1,\cdots,m$. Furthermore, let $h \in H^2(I)$ such that $h(T)=0$, $\partial_t h(T)=0$, $\int\limits_0^T h(s)ds=0$, $\tilde{p}_i(t):=\int_t^T h(s)ds \in [-\frac{\alpha_i}{\vert \beta_i \vert},\frac{\alpha_i}{\vert \beta_i \vert}]$, with $\beta_i:= \int\limits_\Omega f(x) g_i(x)dx$. Define the function
$
\overline{\varphi}(t,x):=h(t)\cdot f(x).
$
The properties of $h$ and $f$ imply that $\overline{\varphi}(T)=\partial_t \overline{\varphi}(T)=0$, and $\overline{\varphi}\vert_{\partial \Omega}=0$. Applying $\partial_{tt} - \bigtriangleup$ to $\overline{\varphi}(t,x)$ implies
$
(\partial_{tt} - \bigtriangleup)\overline{\varphi}=f\cdot\partial_{tt}h - h\cdot\bigtriangleup f
$
a.e. in $\Omega_T$.
Furthermore, we have
\begin{align*}
\begin{matrix}
p_{1,i}(t):=\int\limits_{t}^T \int\limits_\Omega \overline{\varphi}(s,x)  g_i(x)dxds= \int\limits_{t}^T h(s)ds
\int\limits_\Omega f(x) g_i(x)dx=\beta_i \cdot \tilde{p}_i(t),
\end{matrix}
\end{align*}
with $\|p_{1,i}\|_\infty \leq \alpha_i$, and $p_{1,i} \in C_0(I)$, for $i=1,\cdots,m$.

%	In Corollary \ref{support_measure_cor} we specified several properties of an optimal control $\overrightarrow{u}$ of $(P)$. We will use these properties to construct a $\overrightarrow{u}\in BV(I)^m$ which is an exact optimal control for a specific problem $(P)$. Let us define a $\overline{u}_i\in BV(I)$ such that
%$
%		\supp(D_t \overline{u}^\pm_i) \subseteq \lbrace t \in I \vert p_{1,i}(t)=\mp \alpha_i \rbrace.
%$
%	Furthermore, the optimal control can have the form $\overline{u}_i:=u_{1,i}+c$ with $c\in \mathbb{R}$, and for $u_{1,i}\in BV(I)$, $i=1,\cdots, m$, it must hold
%	\begin{align*}
%	\begin{matrix}
%	\int\limits_0^T - \frac{p_{1,i}(s)}{\alpha_i} dD_t\overline{u}_{1,i}(s)=
%	\int\limits_0^T d\vert D_t\overline{u}_{1,i}\vert (s).
%	\end{matrix}
%	\end{align*}
$\textbf{Control:}$
Let us consider now arbitrary positive measures $\mu_{i}^+,\mu_{i}^- \in M(I)$ with support $\supp(\mu_{i}^\pm)$ $\subset \lbrace p_{1,i} =\mp\alpha_i \rbrace$ and define
$
d D_t u_{1,i}(t):=- \frac{\alpha_i}{p_{1,i}(t)} (\mu_i^+ + \mu_i^-).
$
Due to the continuity of $p_{1,i}$ the support of $\mu_i^+$ is disjoint from the support of $\mu_i^-$. The measure $D_t u_{1,i}$ is a positive measure on $\lbrace p_{1,i} =-\alpha_i \rbrace$ and a negative measure on $\lbrace p_{1,i} =+\alpha_i \rbrace$.
On the support of $\mu_i^\pm$ we have $\vert \frac{\alpha_i}{p_{1,i}(s)}\vert =1$.
This gives us
\begin{align*}
\begin{matrix}
\int\limits_0^T - \frac{p_{1,i}(s)}{\alpha_i} dD_tu_{1,i}(s)=\int\limits_0^T d\mu_i^+(s) +\int\limits_0^Td\mu_i^-(s)\\ \\
\underbrace{=}_{\text{positive }\mu_i^\pm} \|\mu_i^+ +\mu_i^-\|_{M(I)}\underbrace{=}_{\vert \frac{\alpha_i}{p_{1,i}(s)}\vert =1} \left\|- \frac{\alpha_i}{p_{1,i}(s)} (\mu_i^+ +\mu_i^-)\right\|_{M(I)}=\|D_t u_{1,i}\|_{M(I)}.
\end{matrix}
\end{align*}
Let us define $\overline{u}_{1,i}:=u_{1,i}+c_i$ with $\overline{u}_{1,i}(t)=\int_0^t dD_t u_{1,i}(s)$ and $c_i\in \mathbb{R}$.

$\textbf{State and Desired State:}$
Furthermore, let us fix the desired state according to
$
y_d=\tilde{S}(\overline{u})-(f\cdot\partial_{tt}h - h\cdot\bigtriangleup f),
$
and some displacement and velocity functions $(y_0,y_1)\in V \times H$.
For the resulting problem $(P)$ the function $\overline{u}$ is the optimal control.

\subsection{Finitely Many Jumps Example}
	\label{Finite Many Jumps Example}
	This example is constructed in such a way that the set $\lbrace t\in I \vert \vert p_1(t)\vert=\alpha \rbrace$ consists of finitely many active points. A similar construction steps can also be found in \cite[Example 1]{[KCK]}.
	Let $\beta>0$, $l \in \mathbb{N}_{>0}$, $d\in\lbrace1,2,3\rbrace$, $\Omega=(-1,1)^d$, $I=(0,2)$, and define
	\begin{align*}
	\begin{matrix}
	g(x)=\mathds{1}_{[-0.5,0.5]^d}(x)=\prod_{i=1}^d\mathds{1}_{[-0.5,0.5]}(x_i).
	\end{matrix}
	\end{align*}
	Define the function $\overline{\varphi}(t,x)$ by
$
	\beta \sin(l\pi t)\sin(l\frac{\pi}{2} t)\prod_{i=1}^d\cos(\frac{\pi}{2}x_i).
$
	Then $\overline{\varphi}$ has the property $\overline{\varphi}\vert_{\partial \Omega}\equiv 0$, $\overline{\varphi}(2)=0$, and
	\begin{align*}
	\begin{matrix}
	\partial_t\overline{\varphi}\vert_{t=2}(t,x)=\beta\left(l\pi \cos(l\pi t)\sin(l\frac{\pi}{2} t)+l\frac{\pi}{2}\sin(l\pi t)\cos(l\frac{\pi}{2} t)\right)
	\prod_{i=1}^d\cos(\frac{\pi}{2}x_i)\vert_{t=2}=0.
	\end{matrix}
	\end{align*}
	Using the wave operator $\partial_{tt}-\bigtriangleup$ on $\overline{\varphi}(t,x)$ gives us:
	\begin{align*}
	\begin{matrix}
	(\partial_{tt}-\bigtriangleup)\overline{\varphi}(t,x)=\left(\frac{d\pi^2}{4}-\frac{5l^2\pi^2}{4}\right) \overline{\varphi}+\beta (l^2 \pi^2)\cos(l\pi t)\cos(l\frac{\pi}{2} t)\prod_{i=1}^d\cos(\frac{\pi}{2}x_i).
	\end{matrix}
	\end{align*}
	By an elementary computation we find
	\begin{align*}
	\begin{matrix}
	p_1(t):=\int\limits_{t}^2 \int\limits_\Omega \overline{\varphi}(t,x)g(x)dxdt
	=\frac{4\beta}{3\pi l}\left(-sin\left(\frac{l}{2}\pi t\right)^3\right)\left(\frac{2\sqrt{2}}{\pi}\right)^d.
	\end{matrix}
	\end{align*}
	It holds that $p_1(0)=p_1(2)=0$, and $p_1\in C_0(I)$ with $\|p_1\|_{C_0(I)}=\frac{4\beta}{3\pi l}\left(\frac{2\sqrt{2}}{\pi}\right)^d$. To have equality $\|p_1\|_{C_0(I)}=\alpha$ at the optimum, we have to chose $\beta=\alpha\frac{3\pi l}{4}\left(\frac{2\sqrt{2}}{\pi}\right)^{-d}$. Furthermore, we have
$
		\left\lbrace t\in I \left\vert
			p_1(t)=\pm \alpha
			 \right.\right\rbrace=\left\lbrace \frac{1+2n}{l}\vert n\in \lbrace 0,\cdots,l-1 \rbrace \right\rbrace \subset I.$
%			 By construction $\vert p_1(t)\vert=\alpha$ only at finitely many isolated points.
\begin{figure}
	 \hspace*{-3.5cm}
	 \includegraphics[width=1.5\textwidth,]{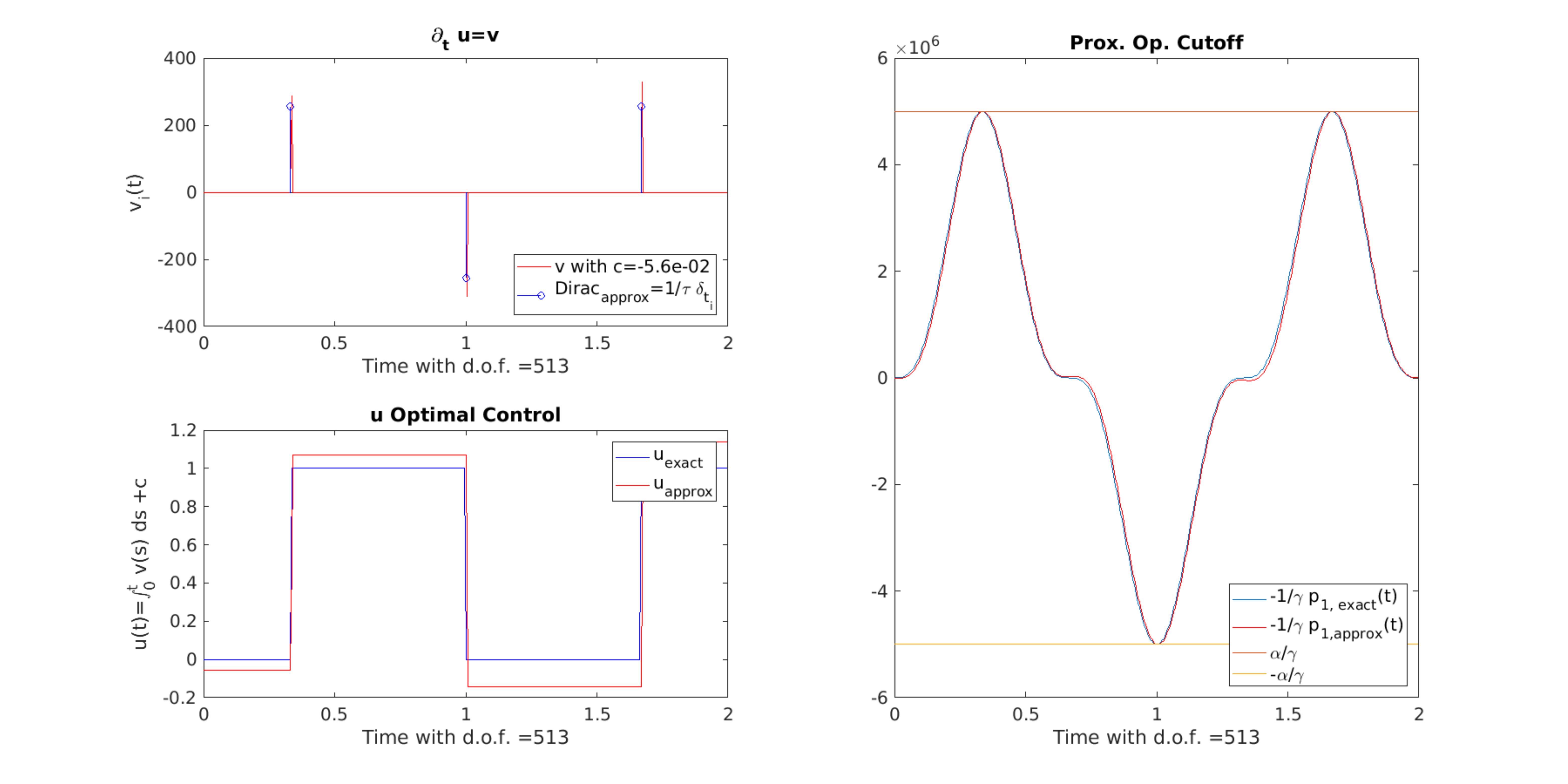}
	 \caption{ }\label{DiracExample1}
	 \hspace*{-3.5cm}
	 \includegraphics[width=1.5\textwidth]{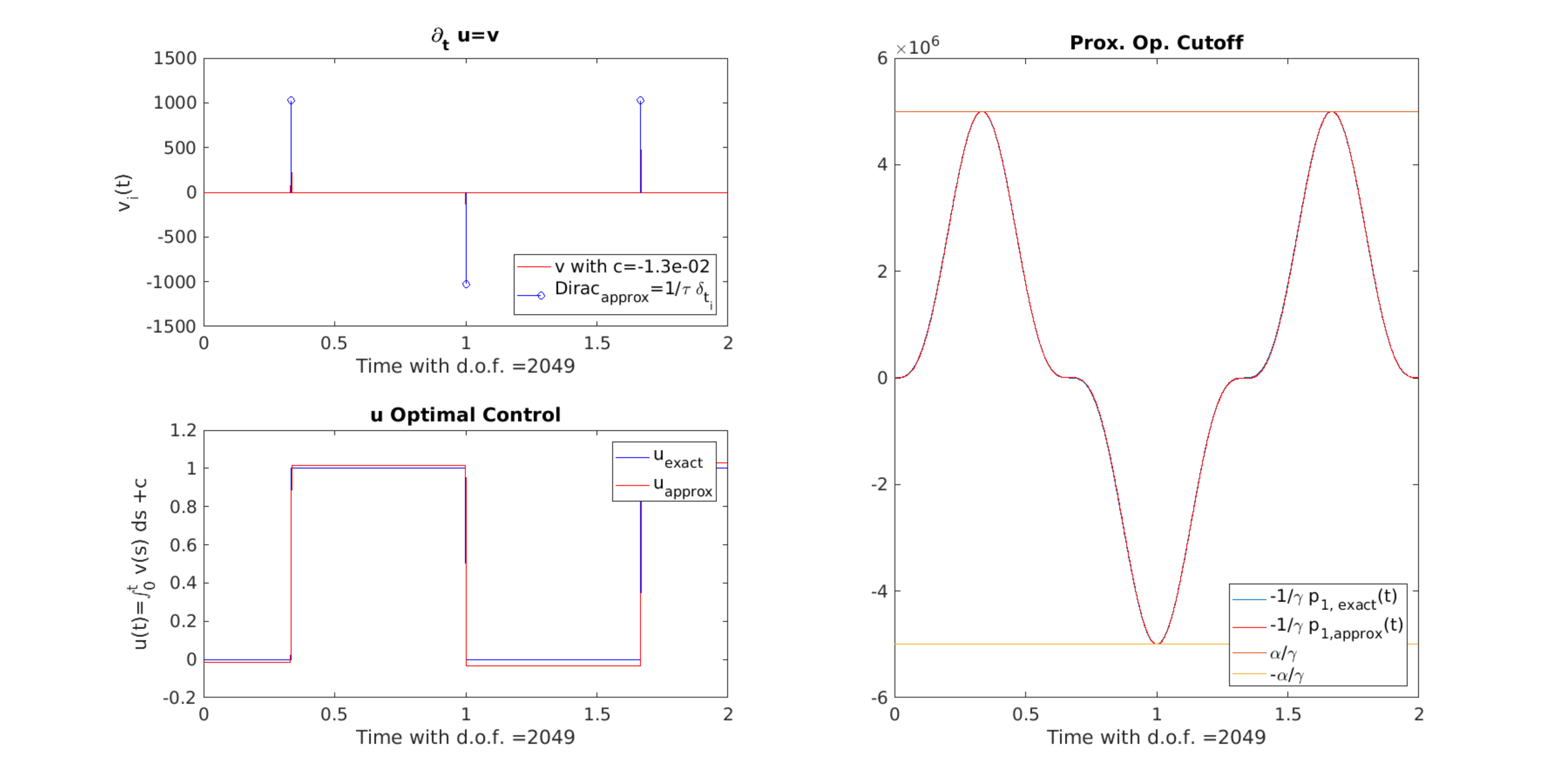}
	 \caption{ }\label{DiracExample4}
	 \end{figure}
The following equalities hold
	\begin{align}\label{differentControlsPos}
	\begin{matrix}
	-\alpha\sum\limits_{n=0}^{l-1} c_n\left(- \sin\left(\pi\frac{2n+1}{2}\right)^3\right)=\int\limits_0^2 -p_1(t)dD_t\overline{u}_1=\|p_1\|_{C_0(I)}\|D_t\overline{u}_1\|_{M(I)}=\alpha\|D_t\overline{u}_1\|_{M(I)}=\alpha\sum\limits_{n=0}^{l-1}\vert c_n\vert
	\end{matrix}
	\end{align}
	for $c_n=\sign\left( \sin\left(\pi\frac{2n+1}{2}\right) \right)$ or $0$. Consider an arbitrary $\overline{c}\in \mathbb{R}$, and define $\overline{u}:=\overline{u}_1+\overline{c}$ with
$
	 \overline{u}_1(t)=\int\limits_0^t \sum\limits_{n=0}^{l-1}c_n d\delta_{\frac{1+2n}{l}}=\sum\limits_{n=0}^{l-1}c_n \mathds{1}_{[\frac{1+2n}{l},2]}(t).
$
	 Now determine the desired state as $y_d:=\tilde{S}(\overline{u})-(\partial_{tt}-\bigtriangleup)\overline{\varphi}(t,x)$ with arbitrary $(y_0,y_1)\in V\times H$. For the resulting problem $(P)$ the function $\overline{u}$ is the optimal control. The corresponding cost functional has the value
	 \begin{align*}
	 \begin{matrix}
	 J(\overline{u})=\frac{1}{2}\|\tilde{S}(\overline{u})-y_d\|_{L^2(\Omega_T)}^2+\alpha\|D_t\overline{u}\|_{M(I)}=\frac{1}{2}\|(\partial_{tt}-\bigtriangleup)\overline{\varphi}\|_{L^2(\Omega_T)}^2+\alpha\sum\limits_{n=0}^{l-1}\vert c_n\vert\\ \\=\frac{\beta^2}{4}\left(\frac{d\pi^2}{4}-\frac{5 l^2 \pi^2}{4}\right)^2+\frac{\beta^2l^4\pi^4}{4}
	 +\alpha\sum\limits_{n=0}^{l-1}\vert c_n\vert
	 \end{matrix}
	 \end{align*}
	 where the last equality follows from an elementary computation.

	 We now turn to discuss numerical results. We considered dimension $d=2$, and the number of Diracs $l=3$. For the desired state $y_d:=\tilde{S}(\overline{u})-(\partial_{tt}-\bigtriangleup)\overline{\varphi}(t,x)$ we used $(y_0,y_1)=(0,0)$. The optimal constant is fixed by $\overline{c}=0$. The BV-path following algorithm starts with $\gamma_0=1$, $(v_0,c_0)=(0,0)$, and we iterate according to $\gamma_{k+1}=0.1\gamma_k$. We stopped the BV-path following algorithm when $\gamma_k=10^{-8}$ was reached. The function $\kappa$ is defined as $\kappa(\gamma)=\gamma^4$.

	 In the Figures \ref{DiracExample1} and \ref{DiracExample4} we depict the optimal control for two different choices of d.o.f.
%	 The Figures \ref{DiracExample1} - \ref{DiracExample4} depend on the degrees of freedom (d.o.f.)
%	 \footnote{
%	 The d.o.f. is depending on $n\in \mathbb{N}_{>0}$, i.e. we discretized the interval $I=[0,2]$ uniformly with $\tau = 2^{-n-2}$ distance between the time nodes and we used $h=2^{-n}$ for the space discretization.
%	 }
%	 in time.
	 On the right hand side of each Figure \ref{DiracExample1} and \ref{DiracExample4}, we see the function $p_{1,\text{approx}}:=\psi_1$
	 which appear in the prox operator (\ref{proxerop}). As suggested by (\ref{implicitcontrol}) we obtain $\partial_t \overline{u}_{\text{approx}}=0$ whenever $\vert p_{1,\text{approx}} \vert < \alpha$ for the derivative of the approximated optimal control.
	
%	 , mutiplied by $-\gamma^{-1}$ as used in the prox operator (see ()). This figure illustrates where the prox operator cuts off the approximated adjoint with respect to $\pm \frac{\alpha}{\gamma}$ and $\gamma$. By (\ref{maxminFormula1}) we get that the solution $v$ will be set $0$ where the values of $-\frac{\psi}{\gamma}$ are below $\pm \frac{\alpha}{\gamma}$.
	 %and the solution $v$ will be set above or below $\pm \frac{\alpha}{\gamma}$ are
	 %The projections of the approximated solution, see red function curve on the upper figure on the left hand side of each of the four figures.
	 In the upper left sub-figure in Figure \ref{DiracExample1} - \ref{DiracExample4} the  \textcolor{red}{red} curve depicts the approximated derivative of the approximated optimal control $\overline{u}_{\text{approx}}$. The blue pin line represents the exact Dirac measures approximated according to the mesh, i.e. for $a\in \mathbb{R}$ the Dirac measure $a\cdot \delta_{t}$ is approximated by a pin in the position $t$ with pin height of $\frac{a}{\tau}$ with $\tau$ the uniform distance between two time nodes. In the lower sub-figure in Figure \ref{DiracExample1} - \ref{DiracExample4} we see the exact optimal control $\overline{u}$ in \textcolor{blue}{blue}, $L^2$-projected on $V_h$, and the approximated optimal control $\overline{u}_{\text{approx}}$ in \textcolor{red}{red}.

%	 	 \begin{figure}
%	 	 \includegraphics[width=0.95\textwidth]{LvL4_Adj.jpg}
%	 	 \caption{ }\label{DiracExample2}
%	 	 \end{figure}
%	 	 	 \begin{figure}
%	 	 	 \includegraphics[width=0.95\textwidth]{LvL5_Adj.jpg}
%	 	 	 \caption{ }\label{DiracExample3}
%	 	 	 \end{figure}
%	 	 	 	 \begin{figure}
%	 	 	 	 \includegraphics[width=0.95\textwidth]{LvL6_Adj.jpg}
%	 	 	 	 \caption{ }\label{DiracExample4}
%	 	 	 	 \end{figure}

We stopped the semi-smooth Newton algorithm as soon as $\|F_{\gamma_k}(u_k)\|_{L^2(I)^m\times\mathbb{R}^m}\leq 10^{-6}=:TOL_N$. 
In Figure \ref{DiracNewton} we show the $\|F_{\gamma_k}(u_k)\|_{L^2(I)^m\times\mathbb{R}^m}$-error for different $\gamma$ values which where used in the in Algorithm \ref{Algo1}. In Figure \ref{DiracNewton}, we see the errors which correspond $2049$ d.o.f. in time. In the last figure on the right we see the error corresponding to Figure \ref{DiracExample4}. As expected, all figures show the super-linearity of the $\|F_{\gamma_k}(u_k)\|_{L^2(I)^m\times\mathbb{R}^m}$-error.

\begin{figure}[H]
	\includegraphics[width=1.0\textwidth]{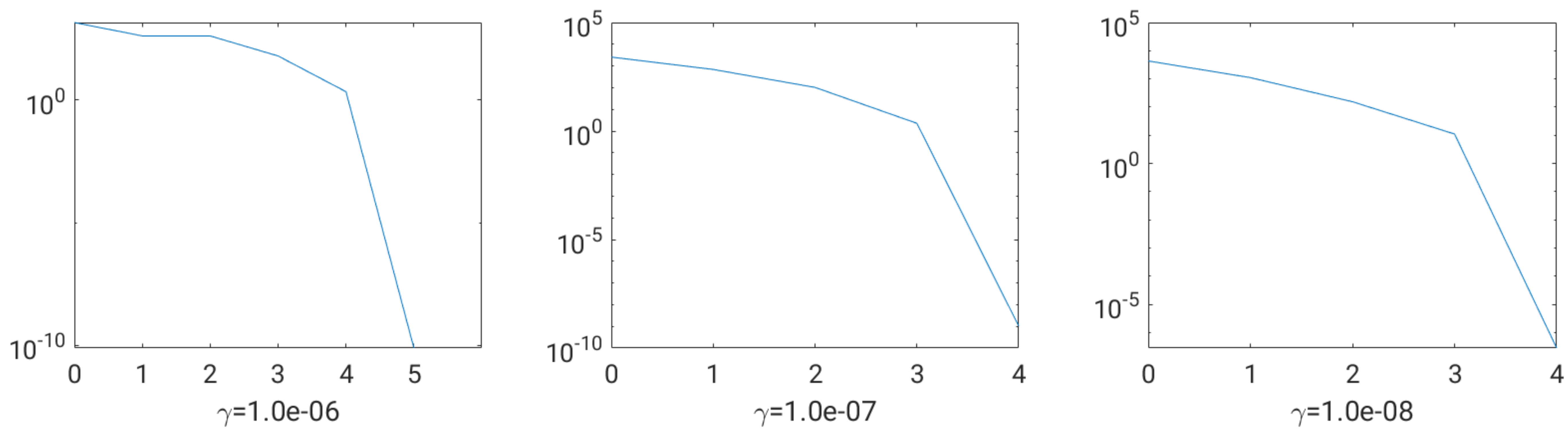}
	\caption{ }\label{DiracNewton}
\end{figure}

\subsection{Cantor Function or Devil's Staircase Example}\label{CantorSection}
Here we construct functions $p_{1,i}\in C_0(I)$ which enable us to use all three classes of measures for the distributional derivative of a $BV$ function in time. This means absolutely continuous measures with respect to the Lebesgue measure, countable linear combinations of Dirac measures, and Cantor measures. For further information about these measure characterisation see for example \cite[p. 184]{[AmFuPa]}. Finally we will use $p_{1,i}$ to create a Cantor-like optimal control.

	Let $0<a_1<b_1<a_2<b_2<T$. Then for all closed non-trivial intervals $I_i\subseteq (a_i,b_i)$, $i=1,2$, there exists $\tilde{p}\in C_c^\infty(I)$ such that $\vert \tilde{p} \vert \leq 1$ with
	\begin{align}\label{BumpingProp}
	\begin{matrix}
	\tilde{p}=\left\lbrace
	\begin{matrix}
	=0 & \text{in} & (0,T)\setminus ((a_1,b_1)\cup (a_2,b_2))\\
		\geq 0 & \text{in} & (a_1,b_1)\\
		\leq 0 & \text{in} & (a_2,b_2)\\
		1  & \text{in} & I_1\\
		-1  & \text{in} & I_2
	\end{matrix}
	\right..
	\end{matrix}
	\end{align}
	In the following we denote by $PC$ the set $\lbrace t \in I \vert  \tilde{p}(t)=\pm 1 \rbrace=I_1\cup I_2$.
Let us now fix $T,a_i,b_i$ such that the assumptions above (\ref{BumpingProp}) hold. We set $h=\partial_t \tilde{p}$, with $\tilde{p}$ as defined in (\ref{BumpingProp}). Then it holds that $h(T)=\partial_t h(T)=0$ and $\int\limits_0^T h(t)dt=\tilde{p}(T)-\tilde{p}(0)=0$ due to the compact support of $\tilde{p}$ inside $I$. Consider $\Omega$, $d$, $m$, $g_i$, $f$ such that the assumptions in section \ref{GeneralExample} "Construction of Test Examples" are fulfilled, and define $
		\overline{\varphi}(t,x):=h(t)\cdot f(x).
$
It holds that
 \begin{align*}
 \begin{matrix}
 p_{1,i}(t):=\int\limits_{t}^T \int\limits_\Omega \varphi(t,x) \cdot g_i(x)dxdt= \int\limits_{t}^T h(t)dt \cdot
\underbrace{ \int\limits_\Omega f(x) g_i(x)dx}_{:=\tilde{z}_i\neq 0}=(\tilde{p}(T)-\tilde{p}(t)) \cdot \tilde{z}_i
=-\tilde{p}(t)\cdot \tilde{z}_i.
 \end{matrix}
 \end{align*}
Under these circumstances, we define $\alpha_i:=\vert\tilde{z}_i\vert$. Now, we consider positive measures $\mu_i^\pm\in M(I)$ with support $\supp(\mu_{i}^\pm)\subset \lbrace p_{1,i} =\mp\alpha_i \rbrace$, and define
$
	d D_t u_{1,i}(t):=- \frac{\alpha_i}{p_{1,i}(t)} (\mu_i^+ + \mu_i^-).
$
Following the instructions in section \ref{GeneralExample} "Construction of Test Examples" an optimal control $\overrightarrow{u}$. The measures $\mu_i^\pm$ can be of the types described above.

Our next aim is to construct an optimal control which has a Cantor-like shape. Hence, denote by $C(t)$ the Cantor function on $[0,1]$ (see \cite[Example 3.34]{[AmFuPa]}).
Define the function $C^+(x)=C\left(\frac{x-PC^+_{left}}{2d_+}\right)$, with $d_+=\vert  I_1\vert$, $PC^+_{left}=\min_{x\in I_1}x$, on the domain $(a_1,b_1)$. Additionally, let us define $C^-(x)=C\left(\frac{PC^-_{right}-x}{2d_-}\right)$ , with $d_-=\vert  I_2\vert$, $PC^+_{right}=\max_{x\in I_2}x$, on the domain $(a_2,b_2)$. Accordingly we define the continuous function
\begin{align*}
\begin{matrix}
u_i(t):=\int\limits_0^t d(\mu_i^+(s)+ \mu_i^-):=\left\lbrace\begin{matrix}
C^+(x) & \text{, on } & I_1\\ \\
\frac{1}{2} & \text{, on }& [PC^+_{left},PC^-_{right}]\setminus PC\\ \\
C^-(x) & \text{, on } & I_2\\ \\
0 & \text{else.} &
\end{matrix}\right.
\end{matrix}
\end{align*}
%Note that $[PC^+_{left},PC^-_{right}]\setminus PC$ is an interval, because we have considered that $\tilde{\epsilon}<\frac{\epsilon}{2}$ and $\epsilon$ small enough.
\begin{figure}[H]
	 \includegraphics[width=1.0\textwidth]{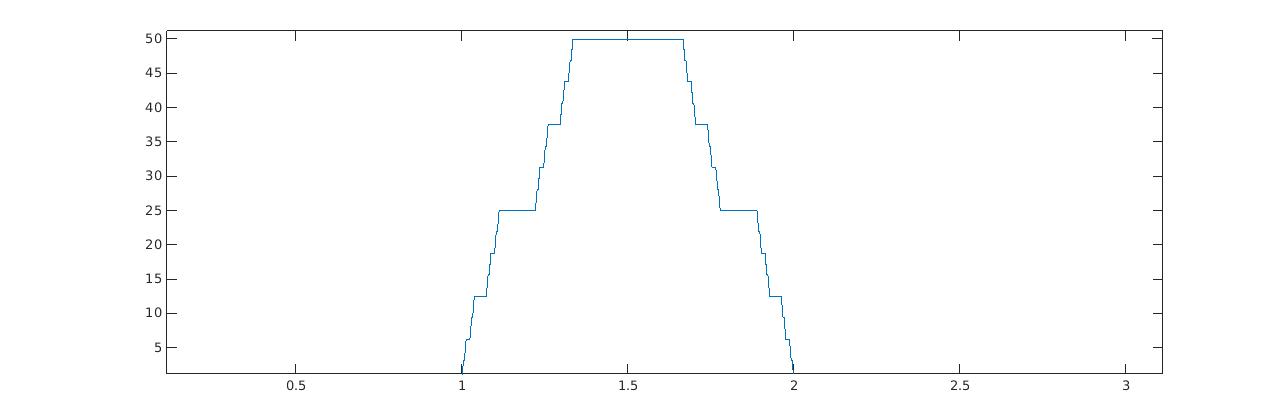}
	 \caption{In this figure we see one possible shape for $\overline{u}_i$.}
	 \end{figure}
Let us define $\overline{u}_i=\tilde{c}_i\cdot u_i(t)+\overline{c}_i$ with $\overline{c}_i\in \mathbb{R}$ and $\tilde{c}_i>0$. The distributional derivative of $\overline{u}_i$ has a positive part in $I_1$ and a negative part in $I_2$.
%Due to the plateau of $u_i$ in $[PC^+_{left},PC^-_{right}]\setminus PC$ the distributional derivative of $\overline{u}_i$ has no Diracs in the endpoints of the interval $[PC^+_{left},PC^-_{right}]\setminus PC$. In particular,
The measure $D_t\overline{u}_i$ is a Cantor measure with support in $PC$ where $D_t\overline{u}_i^+$ is supported in $I_1$ and $D_t\overline{u}_i^-$ in $I_2$. Following the instructions in section \ref{GeneralExample} "Construction of Test Examples" gives us an optimal control $\overrightarrow{u}$.
%Using the desired state $y_d$ and the adjoint wave solution $\varphi$ as in section \ref{GeneralExample}, we receive an exact solution $\overrightarrow{u}\in BV(I)^m$, where each $\overline{u}_i$ is part of the Cantor function, with respect to the parameter chosen above and the non-zero set support ${\lbrace \vert p_{1,i}\vert=\alpha_i\rbrace}$.
Similarly as above, one can construct functions $\tilde{p}_i$, for $i=1,\cdots,m$, such that each of them has finitely many plateaus with different signs.
% This generalizes the example above to $m$ optimal controls with different possible types of measure.

In our numerical experiment we considered the following parameters:
\begin{itemize}
\item[a)] $\Omega=[-2,2]^2$, $T=5$, $m=1$,
\item[b)] $g(x):=10\cdot\mathbf{1}_{[-\frac{1}{2},\frac{1}{2}]^2}(x)$,
\item[c)] $\tilde{p}(t):=\varphi_{\tilde{\epsilon}}*(\mathbf{1}_{[\frac{1}{2},2]}+\mathbf{1}_{[3,4.5]})(t)$, with $\varphi_{\tilde{\epsilon}}(x):=c_{\tilde{\epsilon}}e^{\frac{-1}{1-
			\left(\tilde{\epsilon}^{-1}x\right)^2
			}}\cdot\mathbf{1}_{(-\tilde{\epsilon},\tilde{\epsilon})}(x)$,
$\tilde{\epsilon}=0.28$, $c_{\tilde{\epsilon}}:=\int_\mathbb{R}\varphi_{\tilde{\epsilon}}(x)dx$,

\item[d)]$f(x,y):=\varphi_1(x)\cdot\varphi_2(y)$ with $\varphi_i(x):=\exp\left(
-\frac{1}{1-x^2}
\right)\mathbf{1}_{[-1,1]}(x)\in C^\infty_c(\Omega)$, for $i=1,2$,
\item[e)] and the optimal control we want to approximate
\begin{align*}
\begin{matrix}
\overline{u}(t):=10\cdot C(\frac{t-0.8}{2(2.14-0.8)})\mathbf{1}_{[0.8,2.14]}(t) + 5\cdot \mathbf{1}_{[(2.14,2.85)]}(t)+10\cdot C(\frac{4.2-t}{2(4.2-2.85)})\cdot\mathbf{1}_{[2.85,4.2]}(t).
\end{matrix}
\end{align*}
\end{itemize}

In Figures \ref{CantorExample1} and \ref{CantorExample4} we depict the numerical optimal control for two different choices of d.o.f.
In the upper left sub-figure the \textcolor{red}{red} curve which is the approximated derivative of the approximated optimal control $\overline{u}_{\text{approx}}$. The blue curve represents an approximation to the derivative of $\overline{u}$ by finite differences.

The BV-path following algorithm starts with $\gamma_0=1$, $(v_0,c_0)=(0,0)$, and we iterate according to $\gamma_{k+1}=0.5\gamma_k$. We stopped the BV-path following algorithm when $\gamma_k=3.8\cdot 10^{-6}$ was reached. The function $\kappa$ is defined by $\kappa(\gamma)=0$.
We used $\|F_{\gamma_k}(u_k)\|_{L^2(I)^m\times\mathbb{R}^m}\leq 0.5\cdot 10^{-4}$ as the stopping criterion for the semi-smooth Newton algorithm. 
%In Figure \ref{CantorNewton} we illustrate the $\|F_{\gamma_k}(u_k)\|_{L^2(I)^m\times\mathbb{R}^m}$-error for different $\gamma$ values which where used in Algorithm \ref{Algo1}. In the first line of Figure \ref{CantorNewton}, we see the errors corresponding to $321$ d.o.f. in time. In the second line of Figure \ref{CantorNewton}, we see the errors which correspond to $1281$ d.o.f. in time. The last figure on the right shows the error which corresponds to Figure \ref{CantorExample1}. Analogously, we see in the last figure of the second line of Figure \ref{CantorNewton} the $\|F_{\gamma_k}(u_k)\|_{L^2(I)^m\times\mathbb{R}^m}$-error for the results we present in Figure \ref{DiracExample4}. Although we used $\kappa=0$, all figures show up the super-linearity of the $\|F_{\gamma_k}(u_k)\|_{L^2(I)^m\times\mathbb{R}^m}$-error. 

\begin{figure}
	\hspace*{-3.5cm}
	\includegraphics[width=1.5\textwidth]{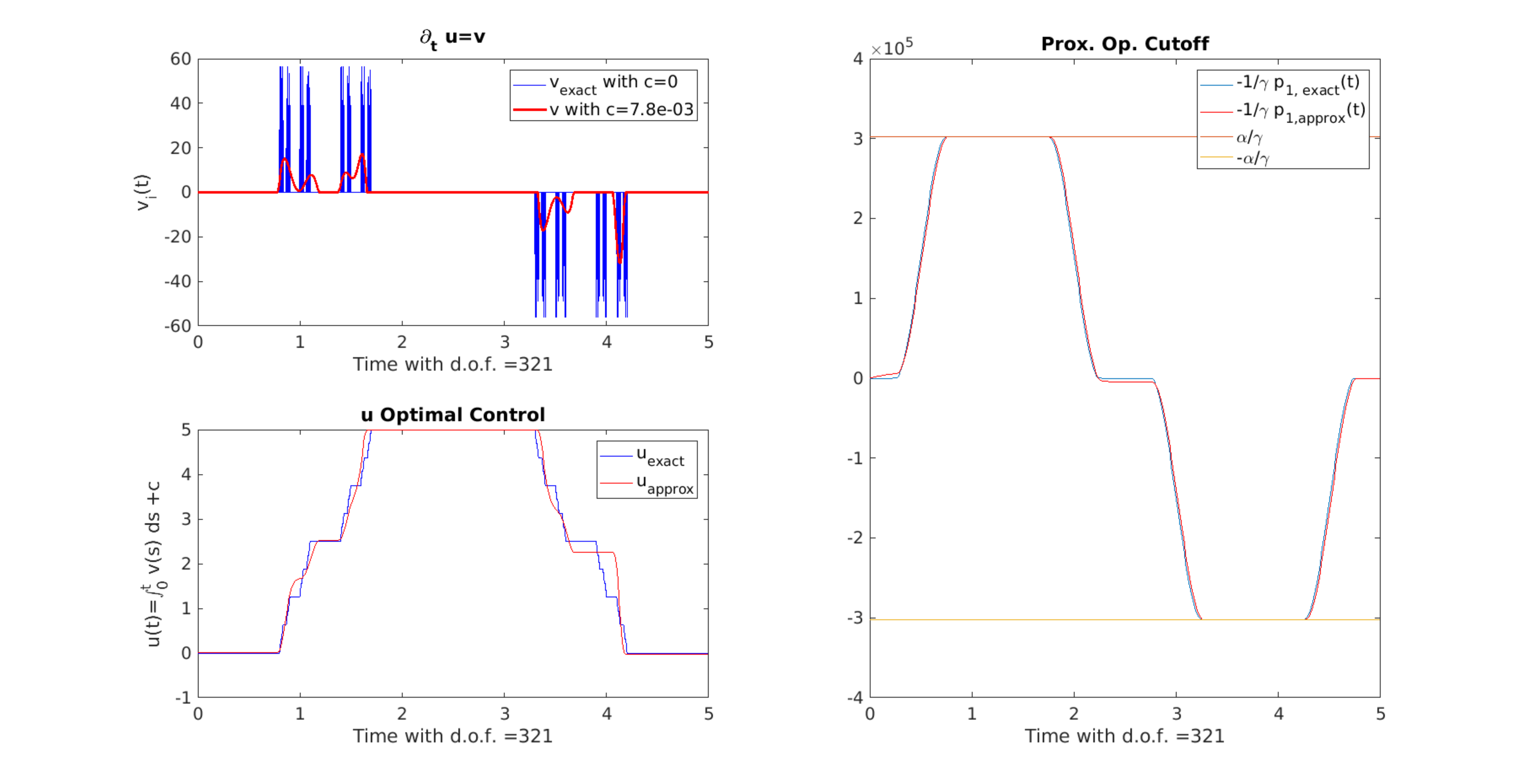}
	\caption{}
	\label{CantorExample1}
	\hspace*{-3.5cm}
	\includegraphics[width=1.5\textwidth]{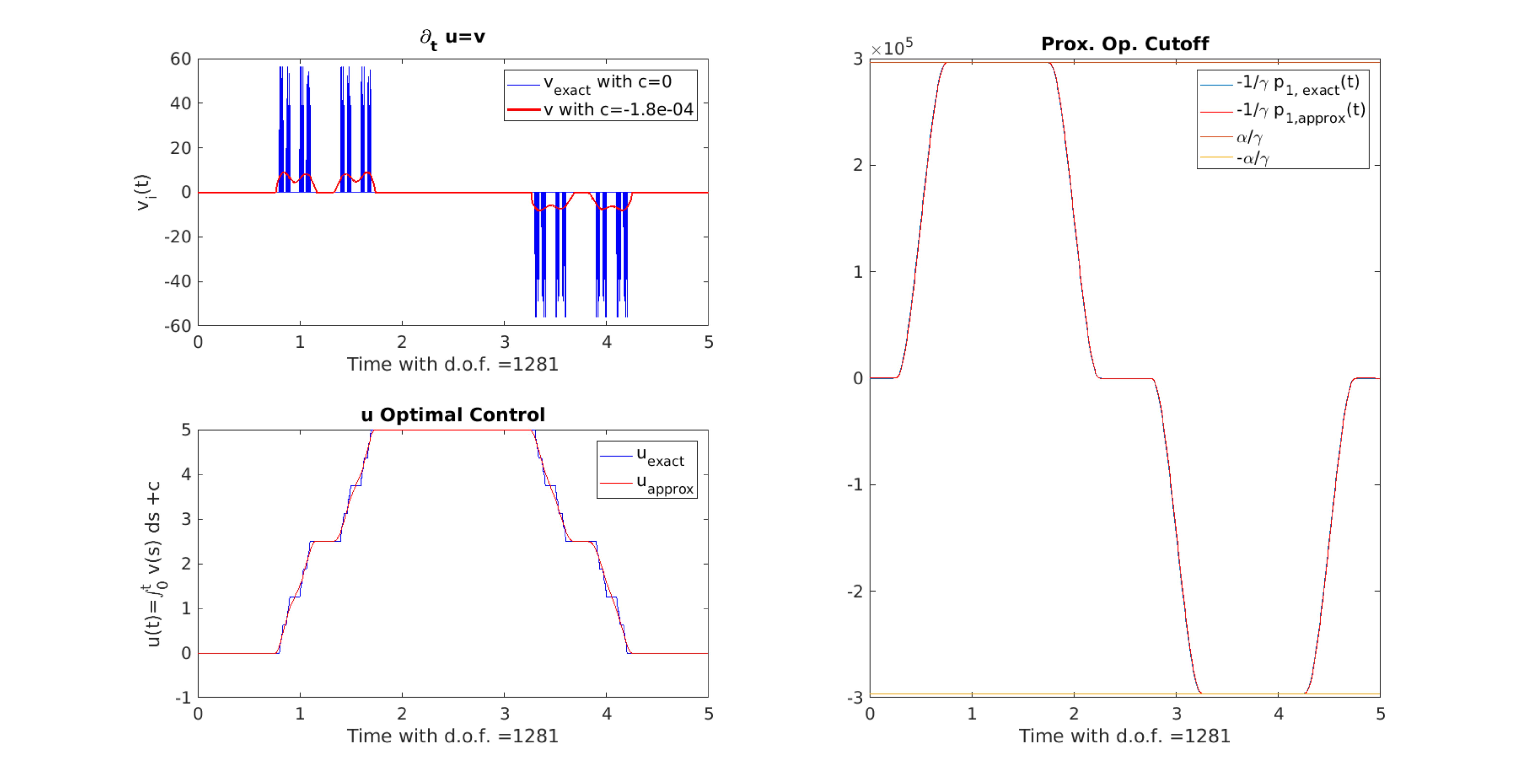}
	\caption{}
	\label{CantorExample4}
\end{figure}

\section{Remarks}
\label{remarkSect}
All results we present in this paper can be shown similarly for the following modifications of the homogeneous boundary conditions in $(\mathcal{W})$:
	\begin{align*}
	\begin{matrix}
	y &=& \phi_D  &\text{ on }&(0,T)\times \partial \Omega,\\
	a_N\frac{\partial y}{\partial \eta} +a_D y & = & \phi_N  &\text{ on }&(0,T)\times \partial \Omega, 
	%\\
	%\beta \partial_t y & = %&\frac{\partial y}{\partial \eta}    %&\text{ on }&(0,T)\times \partial %\Omega 
	\end{matrix}
	\end{align*}
	with $a_D\in \mathbb{R}$, and $a_N\neq 0$. The definition of the solution term in these cases can be found in \cite{lasiecka1986non}, \cite{LASIECKA1991112}.
	 %or \cite[Section 3.5.]{DrThesPTrau}. 
	 %For $\beta>0$ we have the linear wave equation with absorbing boundary conditions. 
	 More detailed discussion can be found in \cite{PhDEngel}.

\section{Appendix}
%\begin{theorem*}{$\mathbf{\ref{FirstOrder}}$. }{First-Order optimality condition for $(\tilde{P})$.}
%	$\overrightarrow{u}=(\overrightarrow{v},\overrightarrow{c})\in M(I)^m\times\mathbb{R}^m$ is an optimal control of $(\tilde{P})$ iff
%	\begin{align*}
%	\begin{matrix}
%	-\begin{pmatrix}
%	p_1(s)\\ p_1(0)
%	\end{pmatrix}:=- \begin{pmatrix}
%	\int\limits_{s}^T\int\limits_\Omega L^* \left( S\begin{pmatrix}
%	\overrightarrow{u}
%	\end{pmatrix}-y_d \right) \overrightarrow{g}dxdt \\ \\
%	\int\limits_{\Omega_T} L^* \left( S\begin{pmatrix}
%	\overrightarrow{u}
%	\end{pmatrix}-y_d \right) \overrightarrow{g}dxdt
%	\end{pmatrix}\in \begin{pmatrix}
%	\left( \alpha_i \partial \| v_i \|_{M(I)}\right)_{i=1}^m \\
%	0_{\mathbb{R}^m}
%	\end{pmatrix}
%	\end{matrix}
%	\end{align*}
%	This first-order optimality condition is equivalent to:
%	\begin{align*}
%	\begin{matrix}
%	\text{For all }i=1,...,m\text{ and }v\in M(I) \text{ it holds that}\\ \\
%	\langle v-v_i,-p_{1,i}\rangle_{M(I),C_0(I)} \leqslant \alpha_i \|v\|_{M(I)} - \alpha_i \|v_i\|_{M(I)}\\ \\
%	\text{and we have }
%	-p_1(0)=0_{\mathbb{R}^m}
%	\end{matrix}
%	\end{align*}
%\end{theorem*}
The following proof of Theorem \ref{FirstOrder} is adapted from the proof of \cite[Theorem 3.3]{[KCK]}.
\begin{proof} Let us define the linear continuous operators
\begin{align*}
\begin{matrix}	
\begin{matrix}
	P_i: & M(I)^m & \rightarrow & M(I)\\ \\
	& v & \mapsto & v_i
	\end{matrix} &, &
	\begin{matrix}
	\mathbb{D}: & M(I)^m \times \mathbb{R}^m & \rightarrow & M(I)^m
		\\ \\
		& (v,c) & \mapsto & v
	\end{matrix}
	\end{matrix}
\end{align*}
for $i=1,\cdots,m$. By the convexity of $(\tilde{P})$ we have that $\overrightarrow{u}=(\overrightarrow{v},\overrightarrow{c})\in M(I)^m\times\mathbb{R}^m$ is an optimal control of $(\tilde{P})$ if
	\begin{align}
	0\in \partial \left(
	\frac{1}{2}\| S(\overrightarrow{u})-y_d \|_{L^2(\Omega_T)}^2 + \sum\limits_{j=1}^m \alpha_j \|v_j\|_{M(I)}
	\right)\subseteq (M(I)^m\times \mathbb{R}^m)^*.
	\end{align}
	Defining $F(\overrightarrow{u}):=\frac{1}{2}\| S(\overrightarrow{u})-y_d \|_{L^2(\Omega_T)}^2$ we have for $0<\tilde{\tau}<1$, and $u=(v,c)\in M(I)^m\times \mathbb{R}^m$:
	\begin{align}\label{CCKTrick}
	\begin{matrix}
	0 \leq \frac{J(\overrightarrow{u}+\tilde{\tau} (u-\overrightarrow{u}))-J(\overrightarrow{u})}{\tilde{\tau}} = \frac{F(\overrightarrow{u}+\tilde{\tau} (u-\overrightarrow{u}))-F(\overrightarrow{u})}{\tilde{\tau}}+ \sum\limits_{j=1}^m \alpha_j \frac{\|P_j(\overrightarrow{v}+\tilde{\tau} (v-\overrightarrow{v}))\|_{M(I)}-\|P_j(\overrightarrow{v})\|_{M(I)}}{\tilde{\tau}}\\ \\
	\underbrace{\leq}_{\text{convexity}}  \frac{F(\overrightarrow{u}+\tilde{\tau} (u-\overrightarrow{u}))-F(\overrightarrow{u})}{\tilde{\tau}} + \sum\limits_{j=1}^m \alpha_j\frac{(1-\tilde{\tau})\|P_j(\overrightarrow{v})\|_{M(I)}+\tilde{\tau}\|P_j(v)\|_{M(I)}- \|P_j(\overrightarrow{v})\|_{M(I)}}{\tilde{\tau}}\\ \\
	\xrightarrow{\tilde{\tau} \rightarrow 0} DF_{\overrightarrow{u}}(u-\overrightarrow{u}) +\sum\limits_{j=1}^m \alpha_j\|P_j \circ \mathbb{D}(u)\|_{M(I)}-\sum\limits_{j=1}^m \alpha_j\|P_j \circ \mathbb{D}(\overrightarrow{u})\|_{M(I)}
	\end{matrix}
	\end{align}
	with $DF_{\overrightarrow{u}}$ the Gateaux derivative of $F$ in $\overrightarrow{u}$. It has the following form:
	\begin{align}
	\begin{matrix}
	DF_{\overrightarrow{u}}:=\begin{pmatrix}
	p_1(s)\\ p_1(0)
	\end{pmatrix}:= \begin{pmatrix}
	\int\limits_{s}^T\int\limits_\Omega L^* \left( S\begin{pmatrix}
	\overrightarrow{u}
	\end{pmatrix}-y_d \right) \overrightarrow{g}dxdt \\ \\
	\int\limits_{\Omega_T} L^* \left( S\begin{pmatrix}
	\overrightarrow{u}
	\end{pmatrix}-y_d \right) \overrightarrow{g}dxdt
	\end{pmatrix}
	\end{matrix}
	\end{align}
	%The function $DF_{\overrightarrow{u}}$ can be tested with every element of $M(I)^m\times \mathbb{R}^m$. Thus, $DF_{\overrightarrow{u}}$ can be identified as an element of $(M(I)^m\times \mathbb{R}^m)^*$.
	Hence, (\ref{CCKTrick}) implies that
	\begin{align}\label{CCKTrick2}
	0\in
	DF_{\overrightarrow{u}} + \partial\left(\sum\limits_{j=1}^m \alpha_j \|P_j(\mathbb{D}(\overrightarrow{u}))\|_{M(I)}
	\right).
	\end{align}
	Using standard techniques for the sub differential of a convex functional and (\ref{CCKTrick2}), this implies that:
	\begin{align}
	\begin{matrix}
	-\begin{pmatrix}
	p_1(s)\\ p_1(0)
	\end{pmatrix}\in \begin{pmatrix}
	\left( \alpha_i \partial \| v_i \|_{M(I)}\right)_{i=1}^m \\
	0_{\mathbb{R}^m}.
	\end{pmatrix}
	\end{matrix}
	\end{align}
	which is equivalent to the following: For all
$i=1,...,m$ and $v\in M(I)$ it holds that\\
$\langle v-v_i,-p_{1,i}\rangle_{M(I),C_0(I)} \leqslant \alpha_i \|v\|_{M(I)} - \alpha_i \|v_i\|_{M(I)}$ and we have $p_1(0)=0_{\mathbb{R}^m}$. 
\end{proof}
Note that the regularity of $L^*(S(\overrightarrow{u})-y_d)\in C(\overline{I};L^2(\Omega))$, $p_1(0)=0_{\mathbb{R}^m}$, and the definition of $p_1$ imply that $p_1\in C_0(I)$ and thus $D_{\overrightarrow{u}}F(u-\overrightarrow{u})$ is an element of $(M(I)^m\times \mathbb{R}^m)^*$.

	In the following we prove Lemma \ref{maxminFormula1}:
%	\begin{lemma*}\label{maxminFormula}
%	Let $(\overrightarrow{v},\overrightarrow{c})\in L^2(I)^m \times \mathbb{R}^m$ be the optimal control of $(P_\gamma^3)$. The first-order optimality system of $(P_\gamma^3)$ is equivalent to the following one:
%			\begin{align*}
%			\begin{matrix}
%			(1) & \overrightarrow{v}(s)=\left(\begin{matrix}\max
%			\left(0,\frac{1}{\gamma}\int\limits_{\Omega}\int\limits_{s}^T
%			L^*(y_d-S(\overrightarrow{u}))g_i dtdx - \frac{\alpha_i}{\gamma}\right)+\\
%			+\min\left(0,\frac{1}{\gamma}\int\limits_{\Omega}\int\limits_{s}^T
%			L^*(y_d-S(\overrightarrow{u}))g_i dtdx + \frac{\alpha_i}{\gamma}\right)\end{matrix} \right)_{i=1}^m\in L^2(I)^m \\ \\
%			(2) & -\int_{\Omega_T}L^*(S(\overrightarrow{u})-y_d)\overrightarrow{g} dtdx-\kappa(\gamma) \overrightarrow{c}=0_{\mathbb{R}^m} \in \mathbb{R}^m
%			\end{matrix}
%			\end{align*}
%	\end{lemma*}
	\begin{proof}
	Firstly, let us present the optimality conditions of $(\tilde{P}_\gamma)$:
	The control $(\overrightarrow{v},\overrightarrow{c})\in L^2(I)^m \times \mathbb{R}^m$ is optimal for $(\tilde{P}_\gamma)$ if
	\label{First_Version}
	\begin{align}\label{key32}
	\begin{matrix}
	\left( \begin{matrix} -\int\limits_{\Omega}\int\limits_s^T L^*(S(\overrightarrow{v},\overrightarrow{c})-y_d) \overrightarrow{g} dt dx -\gamma \overrightarrow{v}  \\ \\ -\int\limits_{\Omega_T} L^*(S(\overrightarrow{v},\overrightarrow{c})-y_d) \overrightarrow{g} dt dx -\kappa(\gamma)\overrightarrow{c}\end{matrix} \right) \in
	\left(
	\begin{matrix}
	(\alpha_i \partial \left( \| \overrightarrow{v}_i \|_{L^1(I)}\right)_{i=1}^m\\  \\ 0_{\mathbb{R}^m}
	\end{matrix}
	\right).
	\end{matrix}
	\end{align}
	Consider now $\overrightarrow{u}:=(\overrightarrow{v},\overrightarrow{c})\in L^2(I)^m \times \mathbb{R}^m$ and the following function
		\begin{align}\label{E1Append}
		\begin{matrix}
		\prox^\gamma_{\sum\limits_i\alpha_i \|\cdot\|_{L^1(I)}}(\overrightarrow{\mathfrak{p}})= \argmin\limits_{v\in L^2(I)^m}\left( \sum\limits_{i=1}^m \alpha_i \|v_i\|_{L^1(I)} + \frac{\gamma}{2}\sum_{i=1}^m \|v_i - p_i\|_{L^2(I)}^2 \right)\text{ for $\overrightarrow{\mathfrak{p}}\in L^2(I)^m$}
		\end{matrix}
		\end{align}
		which we also call the Prox problem. Our aim is to calculate the first-order optimality conditions for this problem, and to find an explicit representation for $\overrightarrow{v}(\overrightarrow{\mathfrak{p}}):=\prox^\gamma_{\sum\limits_i\alpha_i \|\cdot\|_{L^1(I)}}(\overrightarrow{\mathfrak{p}})$. For the sub-differential of the non-smooth term we obtain\\
$
		\partial \left( \sum\limits_{i=1}^m \alpha_i \|P_i(\cdot)\|_{L^1(I)^m}\right)=\sum\limits_{i=1}^m\alpha_i P_i^*\partial\| P_i(\cdot) \|_{L^1(I)} \subseteq L^2(I)^m
$
		with the domain $L^2(I)$ for the function $\alpha_i \|\cdot\|_{L^1(I)}$. Thus, we have the following first-order optimality conditions for $\prox^\gamma_{\sum\limits_i\alpha_i \|\cdot\|_{L^1(I)}}(\overrightarrow{\mathfrak{p}})$:
$
		\gamma(\overrightarrow{\mathfrak{p}}-\overrightarrow{v})\in (\alpha_i \partial \| P_i(\overrightarrow{v}) \|_{L^1(I)})_{i=1}^m.
$
		For (\ref{E1Append}) this means that $\overrightarrow{v}\in L^2(I)^m$ is optimal if and only if
$
		\frac{\gamma}{\alpha_i}(\mathfrak{p}_i - v_i) \in \partial \|v_i\|_{L^1(I)}\text{ for all }i=1,...,m,
$ and $\overrightarrow{\mathfrak{p}}\in L^2(I)^m$.

		Next, let us show that our optimal control can be explicitly written as
		\begin{align}
		\begin{matrix}
		\overrightarrow{v}=\left( \max(0,\mathfrak{p}_i-\frac{\alpha_i}{\gamma})+\min(0,\mathfrak{p}_i+\frac{\alpha_i}{\gamma}) \right)_{i=1}^m \subset L^2(I)^m
		\end{matrix}\label{maxminf}
		\end{align}
		We can proceed coordinate wise. By the definition of the sub-differential we have the equivalent inequality condition
$
		\left\langle \frac{\gamma}{\alpha}(\mathfrak{p}-\overline{v}),v-\overline{v}\right\rangle_{L^2} \leq \|v\|_{L^1}-\|\overline{v}\|_ {L^1}\text{ for all }v\in L^2(I).
$
 By a standard Lebesgue point argument $\overline{v}$ is an optimal control if and only if
		\begin{align*}
		\begin{matrix}
		(v-\overline{v})\frac{\gamma}{\alpha}(\mathfrak{p}-\overline{v})\leq |v|-|\overline{v}| \text{ holds for all }v\in L^2(I) \text{ a.e. in }I
		\end{matrix}
		\end{align*}
This can also be expressed as
		\begin{align*}
		\overline{v}(x)= \left\lbrace\begin{matrix}
		0 & , & \vert \mathfrak{p}(x) \vert < \frac{\alpha}{\gamma}\\
		\mathfrak{p}(x) + \frac{\alpha}{\gamma} & , & \mathfrak{p}(x) \leq -\frac{\alpha}{\gamma}\\
		\mathfrak{p}(x) - \frac{\alpha}{\gamma} & , & \mathfrak{p}(x) \geq \frac{\alpha}{\gamma}
		\end{matrix}\right.
		\end{align*}
		or equivalently as
$
		\overline{v}(x)=\max\left(0,\mathfrak{p}(x)-\frac{\alpha}{\gamma}\right)+\min\left(0,\mathfrak{p}(x)+\frac{\alpha}{\gamma}\right).
$
		Now, we can state the following equivalent first-order optimality conditions for the Prox-problem: For $\overrightarrow{\mathfrak{p}}\in L^2(I)^m$, $\overrightarrow{v}\in L^2(I)^m$ is optimal if and only if
		\begin{align*}
		\begin{matrix}
		\frac{\gamma}{\alpha_i}(\mathfrak{p}_i - v_i) \in \partial \|v_i\|_{L^1(I)}\text{ for all }i=1,...,m \\ \\
		\Leftrightarrow\\ \\
		\overrightarrow{v}=\left(\max(0,\mathfrak{p}_i-\frac{\alpha_i}{\gamma})+\min(0,\mathfrak{p}_i+\frac{\alpha_i}{\gamma})\right)_{i=1}^m\in L^2(I)^m.
		\end{matrix}
		\end{align*}
		Recall that $\overrightarrow{u}=(\overrightarrow{v},\overrightarrow{c})\in L^2(I)^m\times \mathbb{R}^m$ is the optimal control for $(\tilde{P}_\gamma)$ if and only if for all $i=1,...,m$ we have
		\begin{align}
		\begin{matrix}\label{(1)_help}
		-\gamma v_i - \int\limits_{\Omega}\int\limits_s^T L^*(S\overrightarrow{u}-y_d)g_i dtdx \in \alpha_i \partial \|v_i\|_{L^1(I)}
		\end{matrix}\\
		\begin{matrix}\label{(2)_help}
	 	-\int_{\Omega_T}L^*(S\overrightarrow{u}-y_d)\overrightarrow{g} dtdx-\kappa(\gamma) \overrightarrow{c}=0_{\mathbb{R}^m}.
		\end{matrix}
		\end{align}
		Returning to $(\tilde{P}_\gamma)$, define $\overrightarrow{\mathfrak{p}}$ as
		\begin{align*}
		\begin{matrix}
		\overrightarrow{\mathfrak{p}}:=\overrightarrow{v}-\frac{1}{\gamma}\left(\int\limits_{\Omega}\int\limits_s^T L^*(S(\overrightarrow{u})-y_d) \overrightarrow{g} dt dx +\gamma \overrightarrow{v}
		\right)=-\frac{1}{\gamma}\int\limits_{\Omega}\int\limits_s^T L^*(S(\overrightarrow{u})-y_d) \overrightarrow{g} dt dx
		\end{matrix}
		\end{align*}
		which implies the equation:
		\begin{align*}
		\begin{matrix}
		\gamma(\overrightarrow{\mathfrak{p}}-\overrightarrow{v})=-\gamma\overrightarrow{v} -\int\limits_{\Omega}\int\limits_s^T L^*(S(\overrightarrow{u})-y_d) \overrightarrow{g} dt dx  =
		\begin{Bmatrix}
		\text{ first line of the left hand side in (\ref{key32}).}
		\end{Bmatrix}		
		\end{matrix}
		\end{align*}
		Thus, the first-order optimality system of the Prox problem is equal to the first line of the first-order optimality system of $(\tilde{P}_\gamma)$ in (\ref{key32}). Note that we have for $\overrightarrow{v}$
%solves the first line of the first-order optimality system of $(\tilde{P}_\gamma)$ if and only if
		\begin{align*}
		\begin{matrix}
		\overrightarrow{v}=\prox^\gamma_{\sum\limits_i\alpha_i \|\cdot\|_{L^1(I)}}(\overrightarrow{\mathfrak{p}})=\prox^\gamma_{\sum\limits_i\alpha_i \|\cdot\|_{L^1(I)}}\left(-\frac{1}{\gamma}\int\limits_{\Omega}\int\limits_s^T L^*(S(\overrightarrow{u})-y_d) \overrightarrow{g} dt dx\right).
		\end{matrix}
		\end{align*}
		Now, we can use the min-max formula in (\ref{maxminf}) for $\prox^\gamma_{\sum\limits_i\alpha_i \|\cdot\|_{L^1(I)}}(\overrightarrow{\mathfrak{p}})$ to rewrite (\ref{(1)_help}). This implies, the equivalent first-order optimality system we had to prove.
%		\begin{align*}
%		\begin{matrix}
%		(1) & \overrightarrow{v}=\left(\begin{matrix}\max
%		\left(0,\frac{1}{\gamma}\int\limits_{\Omega}\int\limits_{s}^T
%		L^*(y_d-S\overrightarrow{u})g_i dtdx - \frac{\alpha_i}{\gamma}\right)+\\
%		+\min\left(0,\frac{1}{\gamma}\int\limits_{\Omega}\int\limits_{s}^T
%		L^*(y_d-S\overrightarrow{u})g_i dtdx + \frac{\alpha_i}{\gamma}\right)\end{matrix} \right)_{i=1}^m\in L^2(I)^m \\ \\
%		(2) & -\int_{\Omega_T}L^*(S\overrightarrow{u}-y_d)\overrightarrow{g} dtdx=0_{\mathbb{R}^m} \in \mathbb{R}^m
%		\end{matrix}
%		\end{align*}
	
	\end{proof}

\section*{Acknowledgments}

We thank Philip Trautmann for the support he gave by his helpful hints and discussions.

All authors gratefully acknowledge support from the International Research Training Group IGDK 1754 "Optimization and Numerical Analysis for Partial Differential Equations with Nonsmooth Structures", funded by the Austrian Science Fund (FWF) and the German Research Foundation (DFG): [W 1244-N18]. The second author also acknowledges partial support from the ERC advanced grant 668998 (OCLOC) under the EU H2020 research program.

\FloatBarrier
%\bibliography{literature} % Tells it where your list of references is.
%\bibliographystyle{plainnat} % Tells it how you want references displaying in the bibliography.
%\nocite{*}

\end{document}